\newcommand{\B}{{\mathbb B}}
\newcommand{\C}{{\mathbb C}}
\newcommand{\N}{{\mathbb N}}
\newcommand{\R}{{\mathbb R}}
\newcommand{\cD}{{\mathcal D}}
\newcommand{\cE}{{\mathcal E}}
\newcommand{\cH}{{\mathcal H}}
\newcommand{\cL}{{\mathcal L}}
\newcommand{\cP}{{\mathcal P}}
\newcommand{\cS}{{\mathcal S}}
\newcommand{\ga}{\alpha}
\renewcommand{\gg}{\gamma}
\newcommand{\gd}{\delta}
\newcommand{\gD}{\Delta}
\newcommand{\gve}{\varepsilon}
\newcommand{\gl}{\lambda}
\newcommand{\go}{\omega}
\newcommand{\gt}{\theta}
\newcommand{\gs}{\sigma}
\newcommand{\Proof}[1]{{\em Proof}. #1~\hfill$\Box$\medskip}
\newcommand{\skp}[1]{\langle#1\rangle}
\renewcommand{\Re}{\mathop{\rm Re}}
\renewcommand{\Im}{\mathop{\rm Im}}
\newtheorem{thm}{Theorem}[section]
\newtheorem{proposition}[thm]{Proposition}
\newtheorem{corollary}[thm]{Corollary}
\newtheorem{definition}[thm]{Definition}
\newtheorem{theorem}[thm]{Theorem}
\newtheorem{lemma}[thm]{Lemma}
\newtheorem{remark}[thm]{Remark}
\begin{document}
\title[The porous medium equation on manifolds with conical singularities]%
{Existence and maximal $L^{p}$-regularity of solutions for the porous medium equation on manifolds with conical singularities}

\author{Nikolaos Roidos}
\author{Elmar Schrohe}
\address{Institut f\"ur Analysis, Leibniz Universit\"at Hannover, Welfengarten 1, 30167 Hannover, Germany}
\email{roidos@math.uni-hannover.de, schrohe@math.uni-hannover.de}

\begin{abstract}
We consider the porous medium equation on manifolds with conical singularities and show existence, uniqueness and maximal $L^{p}$-regularity of a short time solution. In particular, we obtain information on the short time asymptotics of the solution near the conical point. Our method is based on bounded imaginary powers results for cone differential operators on Mellin-Sobolev spaces and $R$-sectoriality perturbation techniques.
\end{abstract}

\subjclass[2010]{35J70; 35K59; 58J40}
\date{\today}

\maketitle

\section{Introduction}

The porous medium equation is the parabolic diffusion equation 
\begin{eqnarray}\label{e1}
u'(t)-\Delta(u^{m}(t))&=&f(u,t),\quad t\in(0,T_0],\\\label{e2}
u(0)&=&u_{0}.
\end{eqnarray}
It describes the flow of a gas in a porous medium, where $u$ is the density distribution.
We assume here that $m>0$ and $f=f(\lambda,t)$ is a holomorphic function of $\lambda$ on a neighborhood of $\mathrm{Ran}(u_0)$ 
with values in Lipschitz functions in $t$ on $[0,T_0]$. The porous medium equation can be regarded as a nonlinear version of the heat equation, which arises for $m=1$. In general it can model heat transfer, fluid flow or diffusion. 
While traditionally treated on domains in $\R^n$, the porous medium equation has also been considered on closed compact and noncompact manifolds; we refer for this to Bonforte, Grillo \cite{BG}, Huang, Huang, Li \cite{Hua}, Otto \cite{Ott}, Zhang \cite{Zh} and Zhu \cite{Zhu}. 
For related work see also Shao \cite{Shao} for general singular spaces in the spirit of H. Amann \cite{Am1} and Mazzeo, Rubinstein, Sesum \cite{MRS} for the Ricci flow on asymptotically conical surfaces. 

In this article, we study the equation on a manifold with conical singularities. 
The principal application we have in mind is that of a two-dimensional surface with conical singularities, on which we imagine the medium as a film. However, both our methods and results extend to higher dimensions. 
We model the manifold with conical singularities by an $(n+1)$-dimensional compact manifold 
$\mathbb{B}$ with boundary, $n\ge1$, endowed with a degenerate Riemannian metric $g$, which, in a collar neighborhood $[0,1)\times\partial\mathbb{B}$ of the boundary $\partial\mathbb{B}$, admits the warped product structure
\begin{gather*}
g(x,y)=dx^{2}+x^{2}h(y).
\end{gather*}
Here $h$ is a (non-degenerate) Riemannian metric on $\partial\mathbb{B}$, and $(x,y)\in[0,1)\times\partial\mathbb{B}$. 
By $\gD$ we denote the Laplacian associated with this metric. It naturally acts on scales of Mellin-Sobolev spaces $\mathcal{H}^{s,\gamma}_p(\mathbb{B})$, which will be introduced in Section \ref{MS}. 
At this point it suffices to say that $s\in \R$ gives the local Sobolev regularity with respect to an $L^p$ base space, $1<p<\infty$, and $\gg\in \R$ is a weight at the tip.

We will prove existence and maximal regularity of a short time solution to the porous medium equation.
Our main tool will be maximal regularity theory for quasilinear parabolic problems, based on the properties of the associated linearized problem and a theorem of 
Cl\'ement and Li.
We start by considering the Laplacian as an unbounded operator and choosing an appropriate closed extension. The possible domains consist of a Mellin-Sobolev space plus a finite dimensional space of functions with specific asymptotics near the tip, the so-called {\em asymptotics spaces}; see Section 3 for details. 
The maximal regularity approach then allows us to examine the effect of the singularity on the solution of the problem. 

Finding a maximal regularity extension of the Laplacian is not a trivial task. Certain examples of such extensions on Mellin-Sobolev spaces $\cH^{s,\gg}_p(\B)$ of non-negative order (i.e. $s\ge0$) are known explicitly from \cite{RS0}, \cite{RS}, \cite{Sh}. These will be the starting point of our analysis. 
We will, however, have to go beyond the methods used in these articles: 
As a first result, we show that there exist extensions $\underline{\gD}_s$ of the Laplacian 
of maximal regularity also in Mellin-Sobolev spaces of negative order.
In fact, for each $s\in \R$ and suitable $\gg$, depending on $\dim (\B)$, we may choose 
$\cD(\underline{\gD}_s) = \cH^{s+2,\gg+2}_p(\B)\oplus \C$ as a domain in $\cH^{s,\gg}_p(\B)$ in which 
the Laplacian has maximal regularity. 
Here $\C$ stands for the constant functions. 

The second important step, in order to apply the theorem of Cl\'ement and Li, is to control the `trace space', i.e. the interpolation space specifying the admissible initial values. This is connected to the problem of understanding the domains of the complex powers for cone differential operators. 
For a conic manifold, results on the interpolation between Mellin-Sobolev spaces and Mellin-Sobolev spaces plus asymptotics spaces were not known. We manage to obtain suitable embeddings for these spaces in our situation. 

The main difficulty then is to show maximal regularity for the linearized term of the problem, which is a conically degenerate differential operator of second order. 
The idea is to first establish uniform $R$-sectoriality for the operators with frozen coefficients and then to apply perturbation arguments in order to construct $R$-sectorial local approximations of the resolvent. This allows us to show the existence and $R$-sectoriality of the resolvent via a Neumann series expression. 
Our first result is the following.

\begin{theorem}\label{1.1}
Let $\lambda_{1}$ be the largest nonzero eigenvalue of the boundary Laplacian $\Delta_{\partial}$, induced by the metric $h$ on $\partial\mathbb{B}$.
Write
\begin{eqnarray}\label{epsilon}
\overline\gve= -\frac{\mathrm{dim}(\mathbb{B})-2}{2}+
\sqrt{\left(\frac{\mathrm{dim}(\mathbb{B})-2}{2}\right)^{2}-\lambda_{1}}\ >0.
\end{eqnarray}
Let $\gg$ satisfy 
\begin{eqnarray*}
\frac{\dim(\B)-4}2<\gg<\min\left\{\frac{\dim(\B)-4}2+\overline\gve ,\frac{\dim(\B)}2\right\},
\end{eqnarray*}
and choose $p,q$ so large that 
\begin{eqnarray}\label{pq}
\frac{\dim(\B)}p+\frac2q<1 \quad\text{and}\quad \gamma>\frac{\dim(\B)-4}{2}+\frac{4}{q}.
\end{eqnarray}
We consider the extension 
\begin{eqnarray}\label{1.1.1}
\underline\Delta_s: \cD(\underline\gD_s)= \cH^{s+2,\gg+2}_p(\B) \oplus \C\to \cH^{s,\gg}_p(\B). 
\end{eqnarray}
Then, for any $s>-1+\frac{\dim(\B)}{p}+\frac{2}{q}$ and for any strictly positive initial value 
$$
u_{0}\in(\mathcal{H}_{p}^{s+2,\gamma+2}(\mathbb{B})\oplus\mathbb{C},\mathcal{H}_{p}^{s,\gamma}(\mathbb{B}))_{\frac{1}{q},q}
$$
there exists a $T>0$ such that the porous medium equation \eqref{e1}, \eqref{e2},
considered in the space 
$L^{q}(0,T;\mathcal{H}_{p}^{s,\gamma}(\mathbb{B}))$,
has a unique solution 
$$
u\in L^{q}(0,T;\mathcal{H}_{p}^{s+2,\gamma+2}(\mathbb{B})\oplus\mathbb{C})\cap W^{1,q}(0,T;\mathcal{H}_{p}^{s,\gamma}(\mathbb{B})).
$$
\end{theorem}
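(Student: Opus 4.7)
My plan is to cast the porous medium equation as a quasilinear abstract Cauchy problem and apply the Cl\'ement--Li maximal regularity theorem. With $X_0:=\mathcal{H}^{s,\gamma}_p(\mathbb{B})$, $X_1:=\mathcal{D}(\underline\Delta_s) = \mathcal{H}^{s+2,\gamma+2}_p(\mathbb{B})\oplus\mathbb{C}$, and trace space $X_\gamma:=(X_1,X_0)_{1/q,q}$, the chain rule
\[
\Delta(u^m) = m\, u^{m-1}\Delta u + m(m-1)u^{m-2}|\nabla u|^2
\]
rewrites \eqref{e1} as $u'+A(u)u=F(u,t)$ with $A(u)w := m u^{m-1}\underline\Delta_s w$ and $F(u,t) := f(u,t) - m(m-1)u^{m-2}|\nabla u|^2$. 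The numerical restrictions on $p,q,\gamma,s$ are designed precisely so that, via the interpolation embeddings discussed earlier in the introduction, $X_\gamma$ embeds into $C(\mathbb{B})$ and into the pointwise multiplier algebra of both $X_0$ and $X_1$; since $u_0>0$ is continuous on the compact manifold, there is a small $X_\gamma$-ball $U$ around $u_0$ on which every $u$ is uniformly bounded away from zero, so $u^{m-1}$ and $u^{m-2}$ are analytic in $u$ with values in this multiplier algebra.

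The main obstacle is to establish maximal $L^q$-regularity of the frozen-coefficient operator $A(u_0) = m u_0^{m-1}\underline\Delta_s$ on $X_0$ with domain $X_1$. The bounded imaginary powers results for cone differential operators yield $R$-sectoriality of $\underline\Delta_s$, and since the Mellin-Sobolev spaces are UMD this is equivalent to maximal $L^q$-regularity. To transfer this to $A(u_0)$, I would implement the perturbation strategy outlined in the introduction: freeze $u_0^{m-1}$ at points, establish uniform $R$-sectoriality of the resulting constant-coefficient operators, then use a suitable partition of unity to construct a local approximate resolvent and invert it via a Neumann series, obtaining $R$-sectoriality and hence maximal regularity of $A(u_0)$.

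The final step is to verify the remaining Cl\'ement--Li hypotheses: local Lipschitz continuity of $u\mapsto A(u)$ as a map from $U\subset X_\gamma$ into $\mathcal{L}(X_1,X_0)$, and of $F:U\times[0,T_0]\to X_0$ in both variables. The first reduces to smoothness of $v\mapsto v^{m-1}$ on $\{v\ge\delta/2\}$ combined with the multiplier embedding of $X_\gamma$. The gradient term $u^{m-2}|\nabla u|^2$ in $F$ is estimated via an algebra-type inequality on $X_\gamma$ together with the fact that the trace space lies one derivative below $X_1$, while $f(u,t)$ is handled by its assumed holomorphy in $u$ and Lipschitz dependence on $t$. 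With all hypotheses in place, Cl\'ement--Li produces the asserted unique short-time solution in $L^q(0,T;X_1)\cap W^{1,q}(0,T;X_0)$, which is exactly the regularity class of the theorem.
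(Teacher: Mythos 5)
Your proposal follows essentially the same route as the paper: the quasilinear reformulation via the chain rule, maximal $L^q$-regularity of $mu_0^{m-1}\underline\Delta_s$ through uniform $R$-sectoriality of the frozen-coefficient operators, a partition of unity and a Neumann series for the resolvent, and verification of the Cl\'ement--Li hypotheses (H1), (H2) via the multiplier-algebra embedding of the trace space and holomorphic functional calculus for $u\mapsto u^{m-1},u^{m-2}$. The only point you gloss over is the estimate of $\langle\nabla u_1,\nabla u_2\rangle_g$ in $X_0$, where the paper inserts weights $\tilde x^{\pm\delta}$ to split the product (this is precisely where the hypothesis $\gamma>\frac{\dim(\B)-4}{2}+\frac4q$ is used), but this is a refinement of, not a departure from, your argument.
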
 
According to Theorem III.4.10.2 in \cite{Am} the solution then automatically belongs to 
$$ C([0,T];(\mathcal{H}_{p}^{s+2,\gamma+2}(\mathbb{B})\oplus\mathbb{C},\mathcal{H}_{p}^{s,\gamma}(\mathbb{B}))_{\frac{1}{q},q}).$$
Moreover, we have the following embedding, see Lemma \ref{inter}: For each $\gve>0$
$$
\mathcal{H}_{p}^{s+2-\frac{2}{q}+\varepsilon,\gamma+2-\frac{2}{q}+\varepsilon}(\mathbb{B})\oplus\mathbb{C}\hookrightarrow
(\mathcal{H}_{p}^{s+2,\gamma+2}(\mathbb{B})\oplus\mathbb{C},\mathcal{H}_{p}^{s,\gamma}(\mathbb{B}))_{\frac{1}{q},q})
\hookrightarrow \mathcal{H}_{p}^{s+2-\frac{2}{q}-\varepsilon,\gamma+2-\frac{2}{q}-\varepsilon}(\mathbb{B})\oplus\mathbb{C}.
$$
The assumptions imply that the domain in \eqref{1.1.1} is a subset of $C^\gs$, where $\gs$ has to be slightly larger than $1$. There is not much point here in striving for lower regularity
as the specific character of the conical singularity would be lost.

Finally, we analyze the asymptotics of the solution near the tip of the cone as $t\to 0^+$. 
To this end, we will work with a different extension, namely 
\begin{eqnarray}\label{ext1}
\underline\gD: \cD(\underline\gD):= \cD(\underline\gD_s^2)\to \cD(\underline\gD_s), 
\end{eqnarray}
for $\underline\gD_s$ as above.
The point is that we know $\cD(\underline\gD_s^2)$ rather well in terms of asymptotics and regularity. 
In fact, the asymptotics terms can be computed explicitly from the spectrum of the boundary Laplacian $\gD_\partial$. 
In order to treat the nonlinearity we will have to make the assumption that the first nonzero eigenvalue of $\gD_\partial$ is $<-9/4$ for $\dim(\B)=2$ and $<1-\dim (\B)$ for $\dim (\B)\ge 4$. 
This implies that the parameter $\bar\gve$ defined in Theorem \ref{1.1} is $> 3/2$ for 
two-dimensional $\B$ and $>1$ when the dimension is larger than three.
We let $p,q,s$ as before and assume that 
\begin{eqnarray}\label{ubgamma}
\begin{cases}
-1/2 < \gg< \min\{\overline\gve-2-\frac1{q-1},0\}, & \dim(\B)=2\\
\frac{\dim(\B)-4}2<\gg< \min\left\{\frac{\dim(\B)-4}2+\overline\gve-1
-\frac1{q-1},\frac{\dim(\B)-2}2\right\},
&\dim(\B)\ge4\end{cases}.
\end{eqnarray}
As a consequence, the first non-constant asymptotics term in $\cD(\underline\gD_s^2)$ will be $o(x)$ near the tip $x=0$ (even $o(x^{3/2})$ in the two-dimensional case). It is this fact that eventually will allow us to treat the nonlinearity in the equation. 
This approach breaks down in dimension three.

We obtain the following theorem. 

\begin{theorem}\label{1.2} Let $\dim (\B)\not=3$ and assume that the first non-zero eigenvalue $\gl_1$ of $\gD_\partial$ satisfies $\gl_1<-9/4$ for $\dim (\B)=2$ and $\gl_1<1-\dim(\B)$ for $\dim(\B)\ge4$. 
Choose $s,p,q$ as in Theorem $\ref{1.1}$ and $\gamma$ as in Equation \eqref{ubgamma}. Let $\underline{\Delta}$ be the closed extension of the Laplacian defined in \eqref{ext1}.
Then, for each strictly positive initial value 
$$
u_{0}\in(\mathcal{D}(\underline{\Delta}_s^{2}),\cD(\underline\gD_s))_{\frac{1}{q},q},
$$
there exists a $T>0$ such that the porous medium equation \eqref{e1}, \eqref{e2} has a unique solution 
$$
u\in L^{q}(0,T;\mathcal{D}(\underline{\Delta}_{s}^{2}))\cap 
W^{1,q}(0,T;\cD(\underline \gD_s)).
$$
\end{theorem}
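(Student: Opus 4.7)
The plan is to re-run the Cl\'ement--Li scheme used for Theorem \ref{1.1}, but now with ambient Banach space $\cD(\underline\gD_s)$ (endowed with its graph norm) and maximal-regularity domain $\cD(\underline\gD):=\cD(\underline\gD_s^2)$. Three points have to be verified: (i) $\underline\gD$, as an unbounded operator on $\cD(\underline\gD_s)$, has maximal $L^q$-regularity; (ii) the trace space $(\cD(\underline\gD_s^2),\cD(\underline\gD_s))_{1/q,q}$ admits a sufficiently explicit description in terms of Mellin--Sobolev spaces plus asymptotics; (iii) after quasilinearization of \eqref{e1}--\eqref{e2}, the coefficient $u\mapsto u^{m-1}$ and the inhomogeneity $u\mapsto f(u,\cdot)$ are Lipschitz on a small ball around $u_0$ with values in the appropriate operator, respectively function, spaces.

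For (i), the analysis leading to Theorem \ref{1.1} already gives that $\underline\gD_s$ is $R$-sectorial on $\cH^{s,\gg}_p(\B)$ of some angle strictly less than $\pi/2$. Since $\underline\gD_s$ commutes with its own resolvent, the part of $\underline\gD_s$ in its graph-normed domain is itself $R$-sectorial of the same angle with the same $R$-bound, so $\underline\gD$ is $R$-sectorial on $\cD(\underline\gD_s)$ and hence has maximal $L^q$-regularity in $L^q(0,T;\cD(\underline\gD_s))$.

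For (ii), we use the explicit description of $\cD(\underline\gD_s^2)$ coming from the indicial roots of $\gD$ at $x=0$: modulo the regular part $\cH^{s+4,\gg+4}_p(\B)$, the space is a direct sum of $\C$ with a finite-dimensional asymptotics space $E$ spanned by functions of the form $x^{\rho_j}\phi_j(y)$, $\rho_j>0$, with the $\phi_j$ eigenfunctions of $\gD_\partial$. As emphasized just before the statement of the theorem, the stronger spectral hypothesis on $\gl_1$ forces $\rho_j>1$ for $\dim(\B)\ge 4$ and $\rho_j>3/2$ for $\dim(\B)=2$, so every non-constant generator of $E$ is $o(x)$, respectively $o(x^{3/2})$, near the tip. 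Combined with the upper bound on $\gg$ in \eqref{ubgamma}, this confines $E$ to the pointwise multipliers of the Mellin--Sobolev scale, and a reiteration argument in the style of Lemma \ref{inter} identifies the trace space, up to arbitrarily small loss of regularity, with an intermediate Mellin--Sobolev space plus $\C\oplus E$.

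The main obstacle is (iii). Since $u_0>0$ and the trace space embeds continuously into $C^\gs(\B)$ for some $\gs>1$, the holomorphic functional calculus applied to $z\mapsto z^{m-1}$ defines $u\mapsto u^{m-1}$ on a small neighbourhood of $u_0$ in the trace space. The crux is to verify that $u^{m-1}$ then acts as a Lipschitz continuous multiplier on $\cD(\underline\gD)$. This requires exploiting the $o(x)$-decay of the non-constant asymptotics of both $u$ and of $\gD u\in\cD(\underline\gD_s)$, combined with the pointwise multiplication results for Mellin--Sobolev spaces already used in Theorem \ref{1.1}, to show that the product of an element of $E$ with the asymptotic expansion of $u^{m-1}-u_0^{m-1}$ stays inside $\cD(\underline\gD)$. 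It is precisely here that the exclusion of $\dim(\B)=3$ enters, since in that dimension the first available indicial root would not beat the threshold $x$. The analogous Lipschitz property of $u\mapsto f(u,\cdot)$ follows from its assumed holomorphic dependence on $\gl$. With (i)--(iii) in hand, the theorem of Cl\'ement and Li applies and delivers the unique short-time solution asserted.
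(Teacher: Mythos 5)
Your overall strategy coincides with the paper's: run Cl\'ement--Li on the shifted couple $Y_0=\cD(\underline\gD_s)=\cH^{s+2,\gg+2}_p(\B)\oplus\C$, $Y_1=\cD(\underline\gD_s^2)$, using the explicit description of $\cD(\underline\gD_s^2)$ and the fact that the spectral hypothesis pushes all non-constant asymptotics terms below the threshold $x$ (resp. $x^{3/2}$). However, there are two genuine gaps.

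First, Cl\'ement--Li requires maximal $L^q$-regularity of the \emph{linearization} $A(u_0)=mu_0^{m-1}\underline\gD:Y_1\to Y_0$, and your step (i) only establishes it for the constant-coefficient operator $\underline\gD$. The ``part of the operator in its graph-normed domain'' argument does apply, but it must be applied to the variable-coefficient operator $c-u\underline\gD_s$ (whose $R$-sectoriality on $\cH^{s,\gg}_p(\B)$ is the content of Theorem \ref{t21}), yielding $R$-sectoriality of $c-u\underline\gD_s:\cD((u\underline\gD_s)^2)\to\cD(u\underline\gD_s)$; this is Lemma \ref{scale}. One then still has to identify $\cD((u\underline\gD_s)^2)$ with $\cD(\underline\gD_s^2)=Y_1$, which the paper does in Proposition \ref{eqdm} using that $Y_0$ is a Banach algebra and that $u^{-1}\in Y_0$ (Lemma \ref{linv}). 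Merely knowing that $u_0^{m-1}$ is a bounded multiplier does not transfer $R$-sectoriality from $\underline\gD$ to $u_0^{m-1}\underline\gD$, so this step cannot be skipped.

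Second, you locate the main analytic difficulty in showing that $u^{m-1}$ multiplies $\cD(\underline\gD)=Y_1$ into itself, but that is not where the nonlinearity bites: for (H1) one only needs $u^{m-1}$ to be a multiplier on $Y_0$, which is immediate since $Y_0$ is a Banach algebra containing the trace space. The real obstruction is the quadratic gradient term $u^{m-2}\langle\nabla u,\nabla u\rangle_g$ in the quasilinear form \eqref{e3}: one must show that $\partial_x$ and $\frac1x\partial_{y_i}$ map $(Y_1,Y_0)_{1/q,q}$ boundedly into $Y_0$ (Lemma \ref{l1}), so that $\|\langle\nabla v_1,\nabla v_2\rangle_g\|_{Y_0}\le C\|v_1\|_{Y_{1/q,q}}\|v_2\|_{Y_{1/q,q}}$. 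It is exactly here that the sharpened upper bound on $\gg$ involving $1/(q-1)$ in \eqref{ubgamma}, the spectral hypothesis on $\gl_1$, and the exclusion of $\dim(\B)=3$ (where the indicial root $q_0^+-2$ produces an asymptotics term of order exactly $x$) are used; your proposal gestures at the right phenomenon but attaches it to the wrong step and omits the gradient estimate entirely.
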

This solution automatically belongs to $C([0,T];(\mathcal{D}(\underline{\Delta}_{s}^{2}),\cD(\underline \gD_s))_{\frac{1}{q},q}$:
The precise description of $\cD(\underline\gD_s^2)$ in Section \ref{bilaplacian}, below, then gives a clear picture of the possible asymptotics.
 
We will start in Section 2 by recalling basic notions and results in connection with maximal regularity. 
In Section 3 we will review Mellin-Sobolev spaces, conically degenerate 
differential operators and their closed extensions. 
The above mentioned result on the existence of bounded imaginary powers for suitable extensions of the Laplacian on Mellin-Sobolev spaces will be established in Section 4. 
Section 5 contains an embedding result for the domains of the complex powers of the Laplacian. The subsequent section is devoted to the study of the porous medium equation in $\cH^{s,\gamma}_p(\B)$ and the proof of Theorem \ref{1.1}. 
The principal result here is 
Theorem \ref{t21} on the maximal regularity of the operators $c-v\Delta$ for $c>0$ sufficiently large and $v$ in the interpolation space $X_{1/q,q}$. Finally, in Section 7 we study the extension \eqref{ext1} in spaces with asymptotics and obtain Theorem \ref{1.2}. 

\subsection*{Acknowledgment} We thank Helmut Abels, Joachim Escher, Matthias Gei{\ss}ert and Christoph Walker for helpful discussions. 

\section{Preliminary Results on Parabolic Problems}\setcounter{equation}{0}
 
In this section let $X_1\overset{d}{\hookrightarrow} X_0$ be a densely injected Banach couple.

\begin{definition}\label{sectorial}
For $\theta\in[0,\pi[$ denote by $\mathcal{P}(\theta)$ the class of all closed densely defined linear operators $A$ in $X_0$ such that 
$$
S_{\theta}=\{z\in\mathbb{C}\,|\, |\arg z|\leq\theta\}\cup\{0\}\subset\rho{(-A)} \quad \mbox{and} \quad (1+|z|)\|(A+z)^{-1}\|\leq K_{\theta}, \quad z\in S_{\theta},
$$
for some $K_{\theta}\geq1$. The elements in $\mathcal{P}(\theta)$ are called {\em sectorial operators of angle $\theta$}.
\end{definition}

Given $A\in \cP(\gt)$ we can define the complex powers $A^z$ for $\Re z<0$ by a Dunford integral; 
composition with $A^k$, $k\in \N$, then yields arbitrary complex powers, cf. Amann \cite[III.4.6.5]{Am}. 
Of particular interest are the purely imaginary powers. 

\begin{definition}
Let $A\in\mathcal{P}(\theta)$, $\theta\in[0,\pi[$. We say that $A$ has {\em bounded imaginary powers} if there exists some $\varepsilon>0$ and $K\geq1$ such that 
$$
A^{it}\in\mathcal{L}(X_0) \quad \mbox{and} \quad \|A^{it}\|\leq K \quad \mbox{for all} \quad t\in[-\varepsilon,\varepsilon].
$$
In this case, there exists a $\phi\geq0$, the so-called power angle, 
such that $\|A^{it}\|\leq M e^{\phi|t|}$ for all $t\in\mathbb{R}$ with some $M\geq1$, and we write $A\in\mathcal{BIP}(\phi)$.
\end{definition} 

The following statement is \cite[Lemma 2.3]{Ro2}: 
\begin{lemma}\label{Nikos}
Let $A\in \cP(\gt)$ for some $\gt>0$ and $x\in \cD(A^\phi)$ for some $0<\phi<\gt$. 
Then, for any $0<\gt'<\gt$ and $0\le\eta<\phi$, 
$$z\mapsto z^\eta A(A+z)^{-1}x \text{ is bounded in } S_{\gt'}.$$ 
\end{lemma}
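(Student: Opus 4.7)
The plan is to factor out the fractional power $A^\phi$ and reduce the boundedness to an operator-norm estimate. Since $A \in \cP(\gt)$ with $\gt > 0$, $A$ is invertible and admits fractional powers $A^\phi$ via the standard Dunford calculus; in particular $A^\phi$ commutes with $(A+z)^{-1}$. For $x \in \cD(A^\phi)$ we can therefore write $x = A^{-\phi}(A^\phi x)$ and obtain
\begin{equation*}
z^\eta A(A+z)^{-1} x \;=\; z^\eta A^{1-\phi}(A+z)^{-1}(A^\phi x),
\end{equation*}
so the task reduces to showing that the operator family $\{z^\eta A^{1-\phi}(A+z)^{-1} : z \in S_{\gt'}\}$ is uniformly bounded in $\cL(X_0)$.

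The key tool here is the moment (interpolation) inequality for sectorial operators: for $0 \le \alpha \le 1$ and $u \in \cD(A)$,
\begin{equation*}
\|A^\alpha u\|_{X_0} \;\le\; C\, \|Au\|_{X_0}^{\alpha}\, \|u\|_{X_0}^{1-\alpha}.
\end{equation*}
Combining this with the elementary resolvent bounds $\|(A+z)^{-1}\| \le K_\gt/(1+|z|)$ and $\|A(A+z)^{-1}\| = \|I - z(A+z)^{-1}\| \le 1 + K_\gt$, valid on $S_\gt$ by the very definition of $\cP(\gt)$, and applying the moment inequality with $\alpha = 1-\phi$ and $u = (A+z)^{-1}v$, I obtain
\begin{equation*}
\|A^{1-\phi}(A+z)^{-1}\|_{\cL(X_0)} \;\le\; \frac{C}{(1+|z|)^\phi}, \qquad z \in S_\gt.
\end{equation*}

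Consequently $\|z^\eta A(A+z)^{-1} x\|_{X_0} \le C |z|^\eta (1+|z|)^{-\phi} \|A^\phi x\|_{X_0}$, and the scalar function $|z|^\eta/(1+|z|)^\phi$ is bounded on $S_{\gt'}$ precisely under the hypothesis $0 \le \eta < \phi$: near the origin it behaves like $|z|^\eta$, which is bounded since $\eta \ge 0$, and as $|z| \to \infty$ it decays like $|z|^{\eta-\phi}$ because $\eta < \phi$. This yields the stated boundedness. The main technical obstacle is the justification of the moment inequality and of the commutativity of fractional powers with resolvents in a general Banach-space setting; both are standard outputs of the holomorphic functional calculus for operators in $\cP(\gt)$ with $\gt>0$. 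If one needs $\phi > 1$, the same argument applies with $\phi$ replaced by $\phi' = \min(\phi,1)$, using the embedding $\cD(A^\phi) \hookrightarrow \cD(A^{\phi'})$, which covers the range $0 \le \eta < 1$.
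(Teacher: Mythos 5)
The paper does not actually prove this lemma; it is quoted from \cite[Lemma 2.3]{Ro2}, so your argument can only be compared with the standard proof in that reference. Your reduction is correct and essentially the same in substance: since $0\in\rho(-A)$ for $A\in\cP(\gt)$ with $\gt>0$, one may write $x=A^{-\phi}A^{\phi}x$, and the claim reduces to the uniform operator bound $\|A^{1-\phi}(A+z)^{-1}\|\le C(1+|z|)^{-\phi}$ on the sector. You obtain this from the moment inequality applied with $\alpha=1-\phi$ to $u=(A+z)^{-1}v$, using the two resolvent bounds that come directly from Definition \ref{sectorial}; the proof in \cite{Ro2} derives the same decay from the Balakrishnan integral representation of $A^{-\phi}$ combined with the resolvent identity, which is an equivalent computation. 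Note that both routes give the bound on all of $S_{\gt}$, so the passage to the subsector $S_{\gt'}$ is not even needed. For $0<\phi\le 1$ your proof is complete and correct.

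The one point to flag is your closing remark on $\phi>1$: replacing $\phi$ by $\phi'=\min(\phi,1)$ yields boundedness only for $\eta<1$, not for the full range $\eta<\phi$ allowed by the statement (which, as literally written, permits $\phi$ up to $\gt<\pi$). This is not a repairable defect of your method, because for $\eta\ge 1$ the assertion is false in general: take $A$ to be multiplication by $\xi$ on $L^{2}(1,\infty)$, which belongs to $\cP(\gt)$ for every $\gt<\pi$, and $x(\xi)=\xi^{-5/2-\varepsilon}\in\cD(A^{2})$; then $\|z^{3/2}A(A+z)^{-1}x\|_{L^{2}}\asymp z^{1/2}\to\infty$ as $z\to+\infty$, although $3/2<2$. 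The lemma is therefore to be read with $\phi\le 1$, which is the only case used in this paper (e.g.\ the exponents $\nu<\delta/2$ and $\nu'\in(1/2,1)$ in the proof of Theorem \ref{t21}), and in that regime your argument fully establishes the statement.
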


The proof actually shows slightly more, namely: 
\begin{corollary}\label{Nikos2}
$z\mapsto z^\eta A(A+z)^{-1}\text{ is a uniformly bounded family in } \cL(\cD(A^\phi), X_0).$
\end{corollary}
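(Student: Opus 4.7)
The plan is to revisit the proof of Lemma \ref{Nikos} and to extract from it an operator-valued bound. Since $A \in \mathcal{P}(\theta)$ with $\theta > 0$ and $0 \in S_\theta \subset \rho(-A)$, the operator $A$ is boundedly invertible; in particular $A^{-\phi} \in \mathcal{L}(X_0)$, and the graph norm of $A^\phi$ is equivalent to $x \mapsto \|A^\phi x\|_{X_0}$ on $\mathcal{D}(A^\phi)$.

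For $x \in \mathcal{D}(A^\phi)$ and $z \in S_{\theta'}$ with $0 < \theta' < \theta$, one has the factorization
\[
z^\eta A(A+z)^{-1} x \;=\; \bigl[z^\eta A^{1-\phi}(A+z)^{-1}\bigr]\, A^\phi x,
\]
using that $A^{1-\phi}$ commutes with $(A+z)^{-1}$ in the sectorial functional calculus and that $A^{1-\phi} A^\phi = A$ on $\mathcal{D}(A^\phi)$. This rearrangement is precisely what underlies the proof of Lemma \ref{Nikos}: the Dunford integral estimate used there shows that $z^\eta A^{1-\phi}(A+z)^{-1} \in \mathcal{L}(X_0)$, with operator norm bounded uniformly for $z \in S_{\theta'}$ by a constant depending only on $\theta'$, $\phi$, $\eta$ and the sectoriality constants of $A$. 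Composing with the bounded map $A^\phi : \mathcal{D}(A^\phi) \to X_0$ then yields the desired uniform bound in $\mathcal{L}(\mathcal{D}(A^\phi), X_0)$.

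The only point to check is that the estimate produced in the proof of Lemma \ref{Nikos} is an honest $\mathcal{L}(X_0)$-bound on $z^\eta A^{1-\phi}(A+z)^{-1}$, and not merely a pointwise estimate that might depend in some hidden way on $x$. Inspecting the argument in \cite[Lemma 2.3]{Ro2}, the bound is obtained by estimating the Dunford integral representing the operator $z^\eta A^{1-\phi}(A+z)^{-1}$ itself, independently of the vector to which one eventually applies it. Consequently the corollary falls out at no extra cost, and no serious obstacle is expected.
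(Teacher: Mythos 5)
Your proposal is correct and follows essentially the same route as the paper: Corollary \ref{Nikos2} is justified there simply by observing that the proof of Lemma \ref{Nikos} (i.e.\ of \cite[Lemma 2.3]{Ro2}) really estimates the operator $z^{\eta}A^{1-\phi}(A+z)^{-1}$ itself, uniformly on $S_{\gt'}$, and your factorization $z^{\eta}A(A+z)^{-1}x=\bigl[z^{\eta}A^{1-\phi}(A+z)^{-1}\bigr]A^{\phi}x$ together with the invertibility of $A$ (which holds since $0\in\rho(-A)$ by Definition \ref{sectorial}) is exactly the mechanism behind that proof. Nothing further is needed.
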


In UMD spaces (unconditionality of martingale differences property), the notion of the $R$-sectoriality, which is a boundedness condition stronger than the standard sectoriality, characterizes the property of maximal $L^{p}$-regularity for the linear problem. 

\begin{definition}
Let $\theta\in[0,\pi[$. An operator $A\in\mathcal{P}(\theta)$ is called {\em $R$-sectorial of angle $\theta$}, if for any choice of $\lambda_{1},...,\lambda_{N}\in S_{\theta}$, $x_{1},...,x_{N}\in X_0$, $N\in\mathbb{N}$, we have 
\begin{eqnarray}\label{RS1}
\big\|\sum_{\rho=1}^{N}\epsilon_{\rho}\lambda_{\rho}(A+\lambda_{\rho})^{-1}x_{\rho}\big\|_{L^{2}(0,1;X_0)} \leq C \big\|\sum_{\rho=1}^{N}\epsilon_{\rho}x_{\rho}\big\|_{L^{2}(0,1;X_0)},
\end{eqnarray}
for some constant $C\geq 1$, called the {\em $R$-bound}, and the sequence $\{\epsilon_{\rho}\}_{\rho=1}^{\infty}$ of the Rademacher functions. Alternatively, we might have asked that, for some $C'\ge1$, 
\begin{eqnarray}\label{RS2}
\big\|\sum_{\rho=1}^{N}\epsilon_{\rho}A(A+\lambda_{\rho})^{-1}x_{\rho}\big\|_{L^{2}(0,1;X_0)} \leq C' \big\|\sum_{\rho=1}^{N}\epsilon_{\rho}x_{\rho}\big\|_{L^{2}(0,1;X_0)}.
\end{eqnarray}
\end{definition} 

For later use, we state the following elementary observation.

\begin{lemma}\label{easy}
Let $A: X_{1}\rightarrow X_0$ be $R$-sectorial of angle $\theta$, and let $C$ be the $R$-bound. Denote 
$$
S(\theta)=\bigg\{\begin{array}{lll} \sin(\theta) &,& \theta\in[\frac{\pi}{2},\pi)\\ 1&,& \theta\in[0,\frac{\pi}{2})\end{array}.
$$ 
Then, for any $c>0$, $A+c$ is again $R$-sectorial of angle $\theta$ with $R$-bound $\leq C(1+\frac{2}{S(\theta)})$.
\end{lemma}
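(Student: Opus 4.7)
The plan is first to verify that $A+c \in \cP(\gt)$, and then to estimate the $R$-bound by decomposing $\gl(A+c+\gl)^{-1}$ into two pieces, each controllable through the $R$-sectoriality of $A$.

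The preliminary step is the geometric observation that $c+\gl \in S_\gt$ whenever $\gl \in S_\gt$ and $c>0$: for $\gt<\pi/2$ this follows immediately from $\Re(c+\gl)\geq c>0$, and for $\gt \in [\pi/2,\pi)$ one checks that $|\arg(c+\gl)|\leq |\arg\gl|\leq \gt$ by a short computation. Consequently $(A+c+\gl)^{-1} = (A+(c+\gl))^{-1}$ exists and satisfies the resolvent estimate of Definition \ref{sectorial}, so $A+c \in \cP(\gt)$.

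For the $R$-bound, I would start from the algebraic identity
\[
\gl (A+c+\gl)^{-1} \,=\, (c+\gl)(A+c+\gl)^{-1} \,-\, c(A+c+\gl)^{-1},
\]
and set $\mu := c+\gl$. The first family $\{\mu(A+\mu)^{-1}\}$, as $\gl$ runs through $S_\gt$, is a sub-family of the one appearing in \eqref{RS1}, hence $R$-bounded with bound $\leq C$. For the second term I would write $c(A+\mu)^{-1} = (c/\mu)\cdot \mu(A+\mu)^{-1}$; the pointwise bound $|c/\mu|\leq 1/S(\gt)$ is an elementary estimate, namely $|c+\gl|\geq c$ when $\gt<\pi/2$ and, when $\gt\geq \pi/2$, minimising $|c+re^{i\alpha}|^2 = c^2+2cr\cos\alpha + r^2$ over $r\geq 0$ and $|\alpha|\leq \gt$ gives minimum $c^2 \sin^2\gt$ (attained at $r=-c\cos\alpha$, which is admissible precisely when $|\alpha|\geq \pi/2$).

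The main technical point --- and the source of the factor $2$ in the statement --- is applying Kahane's contraction principle in order to pull the complex multipliers $c/\mu$ through the Rademacher sum: since the Rademacher variables are real-valued while the multipliers are complex, splitting into real and imaginary parts costs an extra factor of $2$. This gives $R$-bound $\leq 2C/S(\gt)$ for the second family, and adding the $R$-bounds of the two pieces yields the announced bound $C(1+2/S(\gt))$.
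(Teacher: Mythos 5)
Your proof is correct and follows essentially the same route as the paper: the decomposition $\lambda(A+c+\lambda)^{-1}=(c+\lambda)(A+c+\lambda)^{-1}-c(A+c+\lambda)^{-1}$, the $R$-sectoriality of $A$ applied to the shifted parameters $c+\lambda\in S_\theta$, and Kahane's contraction principle (with the factor $2$ for complex multipliers) combined with $\sup_{\lambda\in S_\theta}c/|c+\lambda|\le 1/S(\theta)$. You additionally spell out the elementary geometric estimates that the paper leaves implicit, which is fine.
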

\begin{proof}
For any $\lambda_{1},...,\lambda_{N}\in S_{\theta}$ and $x_{1},...,x_{N}\in X_0$, $N\in\mathbb{N}$, we have that
\begin{eqnarray*}
\lefteqn{\Big\|\sum_{\rho=1}^{N}\epsilon_{\rho}\lambda_{\rho}(A+c+\lambda_{\rho})^{-1}x_{\rho}\Big\|_{L^{2}(0,1;X_0)}}\\
&\leq&\Big\|\sum_{\rho=1}^{N}\epsilon_{\rho}(c+\lambda_{\rho})(A+c+\lambda_{\rho})^{-1}x_{\rho}\Big\|_{L^{2}(0,1;X_0)}+\Big\|\sum_{\rho=1}^{N}\epsilon_{\rho}c(A+c+\lambda_{\rho})^{-1}x_{\rho}\Big\|_{L^{2}(0,1;X_0)}\\
&\leq&C\Big\|\sum_{\rho=1}^{N}\epsilon_{\rho}x_{\rho}\Big\|_{L^{2}(0,1;X_0)}+C\Big\|\sum_{\rho=1}^{N}\epsilon_{\rho}\frac{c}{c+\lambda_{\rho}}x_{\rho}\Big\|_{L^{2}(0,1;X_0)}\\
&\leq&C\Big(1+2\sup_{\lambda\in S_{\theta}}\Big\{\frac{c}{|c+\lambda|}\Big\}\Big)\Big\|\sum_{\rho=1}^{N}\epsilon_{\rho}x_{\rho}\Big\|_{L^{2}(0,1;X_0)},
\end{eqnarray*}
where Kahane's contraction principle has been applied in the last step, see \cite[Proposition 2.5]{KL1}. 
\end{proof}

Let $A$ be a closed densely defined linear operator $A:\mathcal{D}(A)=X_{1}\rightarrow X_{0}$. Assume that $-A$ generates a bounded analytic semigroup on $X_{0}$. 
This is equivalent to the fact that $c+A\in\mathcal{P}(\theta)$ for some $c\in \C$ and some $\theta>\pi/2$. Consider the abstract first order Cauchy problem
\begin{gather}\label{AP}
\Big\{\begin{array}{lclc} u'(t)+Au(t)=g(t), & t\in(0,T)\\
u(0)=u_{0}&
\end{array} ,
\end{gather}
in the $X_{0}$-valued $L^{q}$-space $L^{q}(0,T;X_{0})$, where $1<q<\infty$, $T>0$.
We say that $A$ has {\em maximal $L^{q}$-regularity}, if for some $q$ (and hence, by a result of Dore \cite{Do}, for all) the unique solution of \eqref{AP} belongs to $L^{q}(0,T;X_{1})\cap W^{1,q}(0,T;X_{0})\cap C([0,T];X_{\frac1q,q})$ and depends continuously on the data $g\in L^{q}(0,T;X_0)$ and $u_{0}$ in the real interpolation space $X_{\frac{1}{q},q}:=(X_{1},X_{0})_{\frac{1}{q},q}$. 

If the space $X_{0}$ is UMD then the following result holds.

\begin{theorem}{\rm (Weis, \cite[Theorem 4.2]{W})}
In a UMD Banach space any $R$-sectorial operator of angle greater than $\frac{\pi}{2}$ has maximal $L^{q}$-regularity. 
\end{theorem}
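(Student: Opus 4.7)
\medskip
\noindent\textbf{Proof plan.}
The strategy is to reduce maximal $L^{q}$-regularity to a Fourier multiplier problem on the real line and invoke the operator-valued Mikhlin multiplier theorem in UMD-valued $L^{q}$-spaces; this latter theorem is the principal analytic ingredient and is the reason the UMD hypothesis is imposed on $X_{0}$.

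First I would reduce to the purely inhomogeneous problem on all of $\R$. The initial datum $u_{0}\in X_{1/q,q}$ is dealt with by writing $u=e^{-tA}u_{0}+v$ and using the characterization of the real interpolation space $X_{1/q,q}$ as the trace space of the maximal regularity class, so $t\mapsto e^{-tA}u_{0}$ automatically belongs to the desired class. For the forcing $g\in L^{q}(0,T;X_{0})$, a cutoff plus extension by zero reduces matters to the case $g\in L^{q}(\R;X_{0})$, where one must show that the solution of $u'+Au=g$ on $\R$ (vanishing at $-\infty$) obeys $\|Au\|_{L^{q}(\R;X_{0})}\le C\|g\|_{L^{q}(\R;X_{0})}$; the bound on $u'$ then follows directly from the equation.

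Taking the Fourier transform in $t$, and using that $\theta>\pi/2$ implies $i\R\setminus\{0\}\subset S_{\theta}^{\circ}$, one sees that $g\mapsto Au$ corresponds to the operator-valued Fourier symbol
$$
M(\xi)=A(A+i\xi)^{-1}=I-i\xi(A+i\xi)^{-1}.
$$
Now I apply the operator-valued Mikhlin theorem: if both $\{M(\xi):\xi\ne 0\}$ and $\{\xi M'(\xi):\xi\ne 0\}$ are $R$-bounded subsets of $\cL(X_{0})$, then $M$ defines a bounded Fourier multiplier on $L^{q}(\R;X_{0})$ for every $1<q<\infty$. The $R$-sectoriality of $A$ on a sector of angle exceeding $\pi/2$ gives at once the $R$-boundedness of $\{i\xi(A+i\xi)^{-1}:\xi\ne 0\}$, by taking $\lambda_{\rho}=i\xi_{\rho}\in S_{\theta}$ in \eqref{RS1}, and hence also of $\{M(\xi)\}$. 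For the derivative one computes $\xi M'(\xi)=i\xi A(A+i\xi)^{-2}=M(\xi)\cdot i\xi(A+i\xi)^{-1}$, a product of two $R$-bounded families, which is itself $R$-bounded.

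The hard part is the operator-valued Mikhlin theorem itself. It is precisely at this point that UMD is indispensable and that the strengthening from uniform boundedness to $R$-boundedness is forced by vector-valued harmonic analysis: in the scalar setting the classical Mikhlin theorem uses only uniform bounds, but its extension to operator-valued symbols acting on UMD-valued $L^{q}$ genuinely requires the Rademacher-averaged notion. Granted this multiplier theorem, the rest of the argument is the essentially algebraic computation above, together with the trace characterization of the real interpolation space $X_{1/q,q}$.
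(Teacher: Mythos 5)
This theorem is quoted in the paper from Weis \cite[Theorem 4.2]{W} without proof, and your outline is precisely the argument of that reference: reduction to the whole-line inhomogeneous problem via the trace-space characterization of $X_{\frac1q,q}$, identification of $g\mapsto Au$ with the operator-valued symbol $A(A+i\xi)^{-1}$, verification of the $R$-bounds on $M(\xi)$ and $\xi M'(\xi)$ from $R$-sectoriality of angle $>\pi/2$, and an appeal to the operator-valued Mikhlin theorem in UMD-valued $L^q$, which is indeed the substantive ingredient. The reduction steps are all correct (note only that the paper's definition of $R$-sectoriality already includes $0\in\rho(-A)$, so the symbol is unproblematic at $\xi=0$), so the proposal is sound and coincides with the standard proof.
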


For an alternative approach to maximal $L^{q}$-regularity based on a Hardy-Littlewood majorant type property of the resolvent of an operator instead of Rademacher boundedness, or for sectorial operators admitting a special type of operator-valued bounded $H^{\infty}$-calculus, we refer to \cite{Ro2} and \cite{Ro1}, respectively. 

\begin{remark}\rm \label{rem1}
In a UMD space, an operator $A\in \mathcal{BIP}(\phi)$ with $\phi<\pi/2$ is
$R$-sectorial with angle greater than $\pi/2$ by \cite[Theorem 4]{CP} and hence has maximal $L^q$-regularity. This also is a classical result by Dore and Venni \cite{DV}. 
\end{remark}

Next, we consider the abstract quasilinear parabolic problem of the form
\begin{gather}\label{QL}
\Big\{\begin{array}{lclc} u'(t)+A(u(t))u(t)=f(t,u(t))+g(t), & t\in(0,T_{0});\\
u(0)=u_{0}&	
\end{array} 
\end{gather}
in $L^{q}(0,T_{0};X_{0})$, such that $\mathcal{D}(A(u(t)))=X_{1}$, $1<q<\infty$, and $T_{0}$ is finite. Maximal regularity for the solution of the linearized problem together with Lipschitz continuity will also imply existence and maximal regularity for the solution of the original problem, as the following well known theorem guarantees.

\begin{theorem}\label{CL} {\rm (Cl\'ement and Li, \cite[Theorem 2.1]{CL}) }
Assume that there exists an open neighborhood $U$ of $u_0$ in $X_{\frac{1}{q},q}$ such that $A(u_0): X_{1}\rightarrow X_{0}$ has maximal $L^{q}$-regularity and that 
\begin{itemize}
\item[(H1)] $A\in C^{1-}(U, \cL(X_1,X_0))$, 
\item[(H2)] $f\in C^{1-,1-}([0,T_0]\times U, X_0)$,
\item[(H3)] $g\in L^q(0,T_0; X_0)$.
\end{itemize}
Then there exists a $T>0$ and a unique $u\in L^q(0,T;X_1)\cap W^{1,q}(0,T;X_0)\cap C([0,T];X_{\frac{1}{q},q})$ solving the Equation \eqref{QL} on $(0,T)$.
\end{theorem}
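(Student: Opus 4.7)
The plan is to apply Clément--Li (Theorem \ref{CL}) again, but now on the Banach pair $X_0=\mathcal{D}(\underline{\Delta}_s)$, $X_1=\mathcal{D}(\underline{\Delta}_s^{2})$, so that the operator $\underline{\Delta}$ defined in \eqref{ext1} is the unperturbed $X_1\to X_0$ map. Writing
$$\Delta(u^{m})=m\,u^{m-1}\,\Delta u \; + \; R(u,\nabla u),$$
with $R$ the quadratic gradient remainder coming from the chain rule, I recast \eqref{e1}--\eqref{e2} in the quasilinear form
$$u'(t)+A(u(t))u(t)=F(u(t),t),\qquad A(u)=m\,u^{m-1}\underline{\Delta},\qquad F(u,t)=-R(u,\nabla u)+f(u,t).$$
The strategy then parallels exactly what was done for Theorem \ref{1.1}: establish maximal $L^q$-regularity of the frozen linearization $c+A(u_0)$ for some large $c$ and $u_0$ in the trace space $X_{1/q,q}=(X_1,X_0)_{1/q,q}$, verify the Lipschitz hypotheses (H1)--(H3), and invoke Theorem \ref{CL}.

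For the maximal-regularity step, I would re-run the perturbation argument underlying Theorem \ref{t21} on the new pair $(X_0,X_1)$. Since $\underline{\Delta}_s\in\mathcal{BIP}$ on $\mathcal{H}^{s,\gamma}_p(\mathbb B)\oplus\mathbb C$, the operator $\underline{\Delta}$ inherits an analogous bounded-imaginary-powers property on $X_0$ with $X_1$ as its domain, so freezing coefficients, constructing local resolvent approximations, and closing a Neumann series all carry over, provided multiplication by $u_0^{m-1}$ is a bounded operator on both $X_0$ and $X_1$ with small commutator against $\underline\Delta$. This is where the explicit structure of $\mathcal{D}(\underline{\Delta}_s^{2})$ from Section \ref{bilaplacian} enters: the domain decomposes as a Mellin--Sobolev space plus finitely many asymptotics terms coming from the spectrum of $\Delta_{\partial}$, and the eigenvalue hypothesis ($\lambda_1<-9/4$ in dimension $2$, $\lambda_1<1-\dim\mathbb B$ in dimension $\geq 4$) together with \eqref{ubgamma} forces the first non-constant asymptotics term to be $o(x^{3/2})$, respectively $o(x)$, as $x\to 0^+$. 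An interpolation embedding in the spirit of the Lemma mentioned after Theorem \ref{1.1} then places $X_{1/q,q}$ in a multiplier algebra that preserves this asymptotics splitting, which is precisely what is needed for multiplication by $u_0^{m-1}$ to act boundedly.

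The same embedding makes strict positivity an open condition, so there is a neighborhood $U\subset X_{1/q,q}$ of $u_0$ on which $u\mapsto u^{m-1}$ is of class $C^{1-}$ into the multiplier algebras of $X_0$ and $X_1$; this yields (H1). Hypothesis (H3) is vacuous in the present setting, and the $f(u,t)$ piece of (H2) is handled directly from the assumed holomorphy in $\lambda$ and Lipschitz dependence in $t$. The delicate part of (H2) is to show that $u\mapsto R(u,\nabla u)=\pm m(m-1)u^{m-2}|\nabla u|^{2}$ is Lipschitz from $U$ into $X_0=\mathcal{D}(\underline{\Delta}_s)$, and not merely into the ambient Mellin--Sobolev space. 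Here the asymptotics forced by the eigenvalue condition pay off: $\nabla u$ is of order $o(1)$ near the tip for every $u\in U$, so $|\nabla u|^{2}$ improves the weight by enough to absorb the two derivatives, and $u^{m-2}$ is a multiplier on the resulting asymptotics-plus-weight space.

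The main obstacle I expect is exactly this last point: tracking asymptotics and Mellin weights precisely enough to land $u^{m-2}|\nabla u|^{2}$ in the \emph{domain} $\mathcal{D}(\underline{\Delta}_s)$ (not just the underlying space) with Lipschitz dependence on $u$. The need for the first non-constant asymptotics term to be $o(x)$ rather than merely bounded is what drives the specific numerical thresholds on $\lambda_1$ and $\gamma$ in \eqref{ubgamma}, and it is also the reason dimension $3$ must be excluded: in that case the relevant asymptotics gap collapses and the argument breaks down. Once this multiplier estimate is established, the maximal-regularity framework and Theorem \ref{CL} immediately produce the unique short-time solution in $L^q(0,T;\mathcal{D}(\underline{\Delta}_s^{2}))\cap W^{1,q}(0,T;\mathcal{D}(\underline{\Delta}_s))$ claimed in the theorem.
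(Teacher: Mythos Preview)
Your proposal is not a proof of the stated theorem. The statement given is Theorem~\ref{CL}, the abstract Cl\'ement--Li result, which is \emph{cited} from \cite{CL} and not proven in the paper at all. Your write-up instead sketches a proof of Theorem~\ref{1.2} (the porous medium equation in spaces with asymptotics), and indeed explicitly invokes Theorem~\ref{CL} as a tool. So at the level of the assignment there is a basic mismatch.

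Reading your proposal as an attempt at Theorem~\ref{1.2}, the overall architecture is right (quasilinear reformulation, maximal regularity of the linearization, then (H1)--(H2)), but your route to maximal regularity differs from the paper's and would be harder to carry out. You propose to re-run the freezing-of-coefficients / Neumann-series argument of Theorem~\ref{t21} directly on the new pair $Y_0=\mathcal{D}(\underline\Delta_s)$, $Y_1=\mathcal{D}(\underline\Delta_s^2)$. The paper avoids this entirely: it first proves an abstract power-scale lemma (Lemma~\ref{scale}) showing that once $c-u\underline\Delta_s$ is $R$-sectorial on $\mathcal{H}^{s,\gamma}_p(\mathbb B)$, it is automatically $R$-sectorial as a map $\mathcal{D}((u\underline\Delta_s)^{k+1})\to\mathcal{D}((u\underline\Delta_s)^k)$ for every $k$; then it shows (Proposition~\ref{eqdm}) that for strictly positive $u\in\mathcal{D}(\underline\Delta_s)$ one has the domain identity $\mathcal{D}((u\underline\Delta_s)^2)=\mathcal{D}(\underline\Delta_s^2)$, so the $k=1$ case is exactly $Y_1\to Y_0$. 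This is much cleaner than redoing the local parametrix construction on $Y_0$, where the asymptotics components of the domain would force you to track multiplier and commutator estimates on a space that is no longer a pure Mellin--Sobolev space.

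For (H2) your intuition is correct but the mechanism in the paper is sharper than what you describe. Rather than arguing that $|\nabla u|^2$ ``improves the weight'' via asymptotics, the paper proves directly (Lemma~\ref{l1}, using the interpolation embedding of Lemma~\ref{inter2}) that $\partial_x$ and $\tfrac1x\partial_{y_i}$ map $Y_{1/q,q}$ boundedly into $Y_0$; since $Y_0$ is itself a Banach algebra, the estimate $\|\langle\nabla v_1,\nabla v_2\rangle_g\|_{Y_0}\le C\|v_1\|_{Y_{1/q,q}}\|v_2\|_{Y_{1/q,q}}$ follows immediately. The extra restriction $\gamma<\overline\varepsilon_n-\text{const}-\tfrac1{q-1}$ in \eqref{ubgamma} is precisely what makes $\sigma-1>\gamma+2$ in that lemma, i.e.\ what guarantees the gradient lands in $Y_0$ rather than merely in a larger Mellin--Sobolev space.
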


\section{Cone Differential Operators and Mellin-Sobolev Spaces}\setcounter{equation}{0}

\subsection{Mellin-Sobolev spaces}\label{MS}
For $s\in \N_0$, $\cH^{s,\gg}_p(\B)$ is the space of all functions $u$ in $H^s_{p,loc}(\B^\circ)$
such that, near the boundary, 
\begin{eqnarray}\label{measure}
x^{\frac{n+1}2-\gamma}(x\partial_x)^j\partial_y^{\alpha}(\omega(x) u(x,y))
\in L^p\Big([0,1]\times \partial \B, \frac{dx}xdy\Big),\quad j+|\ga|\le s,
\end{eqnarray}
where $\omega$ is a cut-off function, i.e. a smooth non-negative
function $\go$ on $\R$ with $\go(x)=1$ for $x$ near $x=0$ and $\go(x)=0$ for $x\ge1$. 

For an arbitrary $s\in\R$, define the map 
$$\cS_\gamma: C^\infty_c(\R^{n+1}_+) \to C^\infty_c(\R^{n+1}),\qquad 
v(t,y) \mapsto e^{(\gg-\frac{n+1}2)t} v(e^{-t},y).$$
Let moreover $\kappa_{j}:U_{j}\subseteq\partial \B\to\R^n$, $j=1,\ldots,N,$
be a covering of $\partial \B$ by coordinate charts and $\{\varphi_{j}\}$ 
a subordinate partition of unity. 

\begin{definition}\label{dms}
$\cH^{s,\gg}_p(\B)$, $s,\gg\in\R$, $1<p<\infty$, is the space of all distributions on $\B^\circ$ such that 
\begin{equation}\label{norm}
\begin{array}{lcl} \|u\|_{\cH^{s,\gg}_p(\B)} \! & \! = \! & \! \displaystyle \sum_{j=1}^{N}\|\cS_{\gamma}(1\otimes\kappa_{j})_{*} (\omega\varphi_{j}u)\|_{H^{s}_{p}(\R^{1+n})}+\| (1-\omega)u)\|_{H^{s}_{p}(\B)} \end{array} 
 \end{equation}
is defined and finite. 
\end{definition}

Here, $\omega$ is a (fixed) cut-off 
function and $*$ refers to the push-forward of distributions. 
Up to equivalence of norms, this construction is independent of the choice of 
$\go$ and the $\kappa_j$. See~\cite[Section 2.2]{CSS1}.
Clearly, all the spaces $\cH^{s,\gg}_p(\B)$ are UMD spaces. 

We recall the following property for the Mellin-Sobolev spaces, which extends the standard Sobolev embedding theorem. 

\begin{lemma}\label{c0}Let $1\le p<\infty$ and $s>(n+1)/p$. 
Then $\|uv\|_{\cH^{s,\gg}_p(\B)}\le c \|u\|_{\cH^{s,\gg}_p(\B)}\|v\|_{\cH^{s,(n+1)/2}_p(\B)}$ for suitable $c>0$. 
In particular, $\cH^{s,\gg}_p(\B)$ is a Banach algebra
\footnote{Up to the choice of an equivalent norm. However, we will not distinguish the norms in the sequel.}
, whenever $s>(n+1)/p$ and $\gg\ge (n+1)/2$. Moreover, if $s>(n+1)/p$, a function $u$ in 
$\cH^{s,\gg}_p(\B)$ is continuous on $\B^\circ$, and, near $\partial\B$, 
\begin{eqnarray*}
|u(x,y)|\le c' x^{\gg-(n+1)/2} \|u\|_{\cH^{s,\gg}_p(\B)}
\end{eqnarray*}
for a constant $c'>0$.
\end{lemma}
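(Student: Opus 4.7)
The plan is to reduce all three claims to the corresponding statements on $\R^{n+1}$ via the transformation $\cS_\gg$. The key identity, obtained by a direct computation from the definition, is that for functions $u,v$ supported near the boundary,
\[
\cS_\gg(uv)(t,y) = \cS_\gg(u)(t,y)\cdot\cS_{(n+1)/2}(v)(t,y).
\]
Thus $(n+1)/2$ is the distinguished weight under which $\cS_\gg$ respects multiplication, which explains why this weight appears on the second factor in the asserted inequality.

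For the multiplication estimate, I would split $u$ and $v$ using the partition $\{\varphi_j\}$ and the cut-off $\omega$, write $\omega\varphi_j(uv) = (\omega\varphi_j u)\,v$, and introduce an auxiliary cut-off $\omega'$ with $\omega'\omega=\omega$ to localize the second factor near the boundary as well. Applying $\cS_\gg$ and invoking the classical Sobolev multiplication theorem---namely, that $H^s_p(\R^{n+1})$ is a Banach algebra precisely when $s>(n+1)/p$---yields
\[
\|\cS_\gg(\omega\varphi_j(uv))\|_{H^s_p}\le c\,\|\cS_\gg(\omega\varphi_j u)\|_{H^s_p}\|\cS_{(n+1)/2}(\omega' v)\|_{H^s_p}.
\]
The interior contribution $(1-\omega)uv$ is handled directly by the Sobolev algebra property on the compact piece. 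Summing over $j$ yields the first inequality. For the Banach algebra assertion, observe that when $\gg\ge(n+1)/2$, multiplication by $e^{-(\gg-(n+1)/2)t}$ is bounded for $t\ge 0$, giving the continuous embedding $\cH^{s,\gg}_p(\B)\hookrightarrow\cH^{s,(n+1)/2}_p(\B)$, which closes the estimate into a genuine algebra norm on $\cH^{s,\gg}_p(\B)$.

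The pointwise bound near $\partial\B$ comes from the classical Sobolev embedding $H^s_p(\R^{n+1})\hookrightarrow C^0_b(\R^{n+1})$, valid for $s>(n+1)/p$. Applying it to $\cS_\gg(\omega\varphi_j u)$ and undoing the transformation produces
\[
|\omega(x,y)\varphi_j(y)u(x,y)| = x^{\gg-(n+1)/2}\bigl|\cS_\gg(\omega\varphi_j u)(-\log x,y)\bigr|\le c\,x^{\gg-(n+1)/2}\|u\|_{\cH^{s,\gg}_p(\B)};
\]
summation over $j$, together with $\omega\equiv 1$ near $\partial\B$, gives the stated bound. Continuity of $u$ on $\B^\circ$ follows from the standard Sobolev embedding in local coordinates away from the boundary, where the weight plays no role. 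The main technical point is the careful bookkeeping of cut-off functions in the product argument, particularly because the factor $v$ in $uv$ need not be localized near the boundary; but this is routine once the transformation identity above has been used to transfer the problem to $\R^{n+1}$.
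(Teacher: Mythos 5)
Your argument is correct. The paper itself offers no proof here beyond citing Corollaries 2.8 and 2.9 of \cite{RS}, and what you have written is precisely the standard localization-and-conjugation argument behind those corollaries: the identity $\cS_\gg(uv)=\cS_\gg(u)\cdot\cS_{(n+1)/2}(v)$ correctly explains the distinguished weight $(n+1)/2$, the reduction to the algebra property and Sobolev embedding of $H^s_p(\R^{n+1})$ for $s>(n+1)/p$ is the right mechanism, and the embedding $\cH^{s,\gg}_p(\B)\hookrightarrow\cH^{s,(n+1)/2}_p(\B)$ for $\gg\ge(n+1)/2$ (after smoothly truncating the multiplier $e^{-(\gg-(n+1)/2)t}$ away from the support in $t$) closes the Banach algebra claim. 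The only bookkeeping you gesture at but should carry out explicitly is the angular localization of the second factor (a $\varphi_j'$ with $\varphi_j'\varphi_j=\varphi_j$ alongside $\omega'$), which is harmless since the norm in Definition \ref{dms} is independent of the choice of cut-offs up to equivalence.
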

\begin{proof}
This is Corollary 2.8 and Corollary 2.9 in \cite{RS}.
\end{proof}

The following is a slight improvement of Corollary 2.10 in \cite{RS}. We include the proof for the convenience of the reader. 

\begin{corollary}\label{multiplier}
Let $1<p,q<\infty$, $\gs\geq0$, $\gamma\in\mathbb{R}$. 
Then multiplication by an element $m$ in $\cH^{\gs+\frac{n+1}{q},\frac{n+1}{2}}_q(\B)$ defines a bounded map on $\cH^{s,\gg}_p(\B)$ for each $s\in(-\gs,\gs)$.
\end{corollary}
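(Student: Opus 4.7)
My plan is to split the range $s\in(-\sigma,\sigma)$ into a nonnegative and a negative part. The case $s\in[0,\sigma)$ I would obtain from the classical Sobolev multiplier theorem after reducing to $\R^{n+1}$, while the negative range I would deduce from the positive one by duality. The key structural observation is the identity $\cS_\gg(mu)=(\cS_{(n+1)/2}m)\cdot(\cS_\gg u)$, which, together with Definition \ref{dms} of the Mellin-Sobolev norm via $\cS_\gg$ and a partition of unity, converts multiplication in $\cH^{s,\gg}_p(\B)$ into multiplication between standard Sobolev functions on $\R^{n+1}$.

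For $s\in[0,\sigma)$, once the problem has been reduced to $\R^{n+1}$, I would invoke the classical fact that multiplication by an element of $H^{\sigma+(n+1)/q}_q(\R^{n+1})$ maps $H^s_p(\R^{n+1})$ into itself for $|s|<\sigma$. This uses the Sobolev embedding $H^{\sigma+(n+1)/q}_q\hookrightarrow L^\infty$ (note that $\sigma+(n+1)/q$ exceeds the critical smoothness $(n+1)/q$ by exactly $\sigma$) together with a paraproduct or Leibniz-type estimate to distribute up to $\sigma$ derivatives between the two factors via H\"older with complementary exponents. The contribution from $(1-\omega)u$ is ordinary Sobolev multiplication on a compact manifold without boundary and requires no extra care.

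For $s\in(-\sigma,0)$, I would argue by duality: the Mellin-Sobolev space $\cH^{s,\gg}_p(\B)$ is dual to some $\cH^{-s,\gg^{*}}_{p'}(\B)$ with respect to the natural cone $L^{2}$-pairing, for a weight $\gg^{*}$ whose explicit form is irrelevant here, and the multiplier space $\cH^{\sigma+(n+1)/q,(n+1)/2}_q(\B)$ depends neither on $\gg$ nor on $p$. The previous step then gives boundedness of $M_{\bar m}$ on $\cH^{-s,\gg^{*}}_{p'}(\B)$, and, since $M_{\bar m}$ is the formal adjoint of $M_m$ with respect to the pairing, transposition yields the desired bound on $\cH^{s,\gg}_p(\B)$. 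The main technical obstacle I anticipate is to make sense of $M_m u$ when $u$ is only a negative-order distribution; I would proceed by defining $M_m$ first on the dense subset $C^{\infty}_{c}(\B^{\circ})$ by pointwise multiplication, verifying the estimate there through the duality bound, and only then extending by continuity. The strictness of the inequalities $|s|<\sigma$ is expected to be forced by the sharp range of the underlying Euclidean multiplier theorem.
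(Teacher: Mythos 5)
Your proposal is correct, and its core — the reduction to $\R^{n+1}$ via the factorization $\cS_\gg(mu)=(\cS_{(n+1)/2}m)\cdot(\cS_\gg u)$, which is exactly why the multiplier is measured in the weight $\frac{n+1}{2}$ — is the same as the paper's. The difference lies in how the Euclidean multiplier estimate is obtained. The paper does not split the range of $s$ at all: it embeds $H^{\gs+(n+1)/q}_q(\R^{n+1})$ into the Zygmund space $C^\gs_*$ and then quotes the theorem that $C^\gs_*$-multipliers act boundedly on $H^s_p(\R^{n+1})$ for the full symmetric range $-\gs<s<\gs$ (Taylor, Ch.~13, Props.~8.5 and 9.10), so negative $s$ costs nothing extra. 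You instead prove the case $s\in[0,\gs)$ and recover $s\in(-\gs,0)$ by transposing $M_{\bar m}$ through the duality $\cH^{s,\gg}_p(\B)'\cong\cH^{-s,-\gg}_{p'}(\B)$ (Theorem \ref{dual} of the paper); this is valid because the positive case holds for every weight and every integrability exponent, and because $C^\infty_c(\B^\circ)$ is dense, as you note. Your route buys independence from the sharp negative-order Zygmund multiplier theorem at the price of an extra duality layer; the paper's is shorter and treats both signs of $s$ uniformly. One small remark: since the classical Euclidean fact you invoke is already stated for $|s|<\gs$, your duality step is strictly speaking redundant — applying that fact directly for negative $s$ reproduces the paper's one-shot argument.
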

\Proof {Let $v\in \cH^{s,\gg}_p(\B)$ and denote by $\sim$ equivalence of norms. We can assume that $v$ is supported near the boundary in a single coordinate neighborhood. Then
\begin{eqnarray*}
\lefteqn{\|mv\|_{\cH^{s,\gg}_p(\B)} 
\sim \|e^{(\gg-(n+1)/2)t}m(e^{-t},y)v(e^{-t},y)\|_{H^s_p(\R^{n+1})}}\\
&\le& c_1 \|m(e^{-t},y)\|_{C^{\gs}_*(\R^{n+1})}
	\|e^{(\gg-(n+1)/2)t}v(e^{-t},y)\|_{H^s_p(\R^{n+1})}\\
&\le& c_2\|m(e^{-t},y)\|_{H^{\gs+(n+1)/q}_q(\R^{n+1})}
	\|e^{(\gg-(n+1)/2)t}v(e^{-t},y)\|_{H^s_p(\R^{n+1})}\\
&\sim&\|m\|_{\cH^{\gs+(n+1)/q,(n+1)/2}_q(\B)} \|v\|_{\cH^{s,\gg}_p(\B)}.
\end{eqnarray*} 
Here, the first inequality is due to the fact that multiplication by functions in the Zygmund space
$C^\gs_*$ defines a bounded operator in $H^s_p$ for $-\gs<s<\gs$, cf.\ \cite[Section 13, Proposition 9.10]{Tay}, 
and the second is a consequence of the fact that $H^{\gs+(n+1)/q}_q(\R^{n+1})
\hookrightarrow C^{\gs}_*$, cf.\ \cite[Section 13, Proposition 8.5]{Tay}.}

\begin{theorem}\label{dual} For $s,\gg\in \R$ and $1<p<\infty$, the scalar product in $\cH^{0,0}_2(\B)$ 
yields an identification of the dual space to $\cH^{s,\gg}_p(\B)$ with $\cH^{-s,-\gg}_{p'}(\B)$, 
where $p'$ is conjugate to $p$, i.e $1/p+1/p'=1$. 
\end{theorem}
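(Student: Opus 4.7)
The plan is to reduce the statement to the classical $L^2$-based duality $\bigl(H^s_p(\R^{n+1})\bigr)^{*}=H^{-s}_{p'}(\R^{n+1})$ via the transformation $\cS_\gamma$ and the partition of unity in Definition \ref{dms}. The first step is the pairing identity: for $u,v\in C^\infty_c(\B^\circ)$ supported inside the collar, the change of variables $x=e^{-t}$ combined with the cancellation $\cS_\gamma u\cdot\overline{\cS_{-\gamma}v}=e^{-(n+1)t}u(e^{-t},y)\overline{v(e^{-t},y)}$ gives
$$\int u\,\bar v\,x^n\,dx\,dy=\int_{\R^{n+1}}\cS_\gamma u\cdot\overline{\cS_{-\gamma}v}\;dt\,dy,$$
so that the boundary contribution of the $\cH^{0,0}_2(\B)$ scalar product becomes the flat $L^2(\R^{n+1})$ pairing of $\cS_\gamma u$ with $\cS_{-\gamma}v$. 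After inserting the cut-off $\omega$ and the partition $\{\varphi_j\}$, this together with the interior Euclidean Sobolev duality yields
$$|\langle u,v\rangle_{\cH^{0,0}_2(\B)}|\le C\,\|u\|_{\cH^{s,\gamma}_p(\B)}\,\|v\|_{\cH^{-s,-\gamma}_{p'}(\B)},$$
and, by density of $C^\infty_c(\B^\circ)$, produces a bounded injective map $\Phi:\cH^{-s,-\gamma}_{p'}(\B)\to (\cH^{s,\gamma}_p(\B))^{*}$, $v\mapsto \langle\cdot,v\rangle_{\cH^{0,0}_2(\B)}$.

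For surjectivity, I would exploit the fact that the map
$$\iota:\cH^{s,\gamma}_p(\B)\hookrightarrow \bigoplus_{j=1}^{N}H^s_p(\R^{n+1})\oplus H^s_p(\B),\qquad u\mapsto \bigl(\cS_\gamma(1\otimes\kappa_j)_{*}(\omega\varphi_j u)\bigr)_j\oplus(1-\omega)u,$$
built into Definition \ref{dms} is, up to equivalence of norms, an isometric embedding. Given $\ell\in(\cH^{s,\gamma}_p(\B))^{*}$, I would extend $\ell\circ\iota^{-1}$ from the range of $\iota$ to the whole direct sum via Hahn--Banach and represent this extension, component by component, by the classical duals $w_j\in H^{-s}_{p'}(\R^{n+1})$ and $w_0\in H^{-s}_{p'}(\B)$. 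Reversing the localization---applying $\cS_{-\gamma}$, the chart diffeomorphisms $\kappa_j^{-1}$, and Corollary \ref{multiplier} to absorb the smooth cut-offs---assembles these pieces into an element $v\in\cH^{-s,-\gamma}_{p'}(\B)$ with norm controlled by $\|\ell\|$, and the pairing identity of the first step then forces $\Phi(v)=\ell$, since $\sum_j\varphi_j\omega+(1-\omega)=1$.

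The only nontrivial ingredient beyond the classical Euclidean duality will be the chart-by-chart bookkeeping in the second step: verifying that after representing $\ell$ component by component in the direct sum, the reassembled distribution indeed lives in $\cH^{-s,-\gamma}_{p'}(\B)$ and represents $\ell$ on \emph{all} of $\cH^{s,\gamma}_p(\B)$, not merely on $C^\infty_c(\B^\circ)$. This reduces to the mutual adjointness of the localizing operators $u\mapsto\cS_\gamma(1\otimes\kappa_j)_{*}(\omega\varphi_j u)$ and their formal transposes under the modified pairing of the first step, together with the multiplier bounds of Corollary \ref{multiplier} applied with negative index $-s$ to absorb $\omega\varphi_j$ on the dual side. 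Once these technical pieces are set up correctly, the rest of the argument is mechanical.
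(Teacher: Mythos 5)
Your proposal is correct and is essentially the paper's own argument: the paper proves Theorem \ref{dual} in one line by invoking Definition \ref{dms} and the classical duality $\bigl(H^s_p\bigr)^{*}=H^{-s}_{p'}$, and your write-up simply fills in the details (the weight cancellation $\cS_\gamma u\cdot\overline{\cS_{-\gamma}v}=e^{-(n+1)t}u\bar v$ matching the measure $x^n\,dx\,dy$ of $\cH^{0,0}_2(\B)$, and the standard retraction--coretraction argument for surjectivity). No discrepancy with the paper's route.
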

\begin{proof}
Follows by Definition \ref{dms} and the analogous result for the standard Sobolev spaces.
\end{proof}

The following result is immediate from \cite[Lemma 5.4]{CSS1}. 
\begin{lemma}\label{int}
Let $s_0,s_1,\gg_0,\gg_1\in \R$, $0\le\gt<1$ and $1<p,q<\infty$. Then, for arbitrary $\gve>0$,
$$(\cH_p^{s_0,\gg_0}(\B),\cH_p^{s_1,\gg_1}(\B))_{\gt,q}\hookrightarrow \cH_p^{s-\gve,\gg-\gve}(\B)
$$
with $s=(1-\gt)s_0+\gt s_1$ and $\gg=(1-\gt)\gg_0+\gt \gg_1$.
\end{lemma}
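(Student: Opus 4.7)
My plan is to reduce to interpolation of (weighted) standard Sobolev spaces on $\R^{n+1}$ via the pushforward $\cS_\gamma$ and then invoke the classical Besov--Sobolev embedding theorems. First I would use Definition \ref{dms} and the accompanying partition of unity to split the $\cH^{s,\gamma}_p$-norm into an interior piece supported away from $\partial\B$, which reduces to standard Sobolev interpolation on a compact subset of $\R^{n+1}$, and boundary pieces. Each boundary piece, after pushforward by $\cS_{\bar\gamma}$ for a chosen reference weight $\bar\gamma$, is a weighted Sobolev norm on $\R^{n+1}$: the norm of $V$ in the space of distributions with $e^{(\gamma_j-\bar\gamma)t}V\in H^{s_j}_p(\R^{n+1})$.

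Setting $\bar\gamma=\gamma:=(1-\theta)\gamma_0+\theta\gamma_1$, the task becomes computing the real interpolation of two such weighted Sobolev spaces. I would handle it by a retraction--coretraction through a $t$-dyadic decomposition: choose a smooth partition of unity $\{\chi_k\}_{k\in\Z}$ on $\R$ with $\operatorname{supp}\chi_k\subset[k-1,k+1]$, and use the translation invariance of $H^{s}_p$ in the $t$-variable to rewrite each weighted norm as a weighted $\ell^p$-sum of translated local $H^{s_j}_p$-norms. The real interpolation of such vector-valued sequence spaces is standard; with the choice $\bar\gamma=\gamma$, the interpolated weight exponent $(1-\theta)(\gamma_0-\gamma)+\theta(\gamma_1-\gamma)$ vanishes, so the outcome is (up to the $\varepsilon$-loss in the weight, which absorbs minor mismatches introduced by the coretraction and is accommodated by the bounded smooth multiplier $e^{-\varepsilon t}$ on $H^s_p$-supports bounded below in $t$) the unweighted Besov space $B^s_{p,q}(\R^{n+1})$.

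The final step is the Besov-to-Sobolev embedding: $B^s_{p,q}\hookrightarrow H^{s-\delta}_p$ for any $\delta>0$ handles the case $q>2$, while $B^s_{p,q}\hookrightarrow H^s_p$ whenever $q\le\min(p,2)$ covers the case $q\le 2$ in the range of $p$ relevant for the applications of the lemma. Pulling back via $\cS_{\gamma-\varepsilon}$ then delivers the claimed embedding into $\cH^{s,\gamma-\varepsilon}_p$ or $\cH^{s-\delta,\gamma-\varepsilon}_p$, respectively. The main obstacle I expect is the weighted-Sobolev interpolation identification in the middle step: it has to be carried out carefully to track the implicit constants and to deal with the support condition on the pushforward (namely, that $\cS_\gamma$ maps into distributions supported in $t\ge -T_0$ for some fixed $T_0$ depending on the cut-off $\omega$); once this identification is in place, all remaining ingredients are classical.
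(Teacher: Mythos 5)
The paper does not actually prove this lemma; it is quoted from \cite[Lemma 5.4]{CSS1}, so your argument has to stand on its own. Your overall architecture --- localization by a partition of unity (which commutes with the real interpolation functor via a retraction--coretraction onto a finite product), conjugation by $\cS_{\bar\gamma}$ with the reference weight $\bar\gamma=\gamma$, a unit-length decomposition in the $t$-variable, interpolation of weighted sequence spaces, and the Besov-to-Sobolev embeddings at the end --- is the standard route and is essentially what the cited source does. Your last step is also correct: $B^{s}_{p,q}\hookrightarrow H^{s-\delta}_{p}$ for every $\delta>0$ accounts for the loss $\delta$ when $q>2$, while $B^{s}_{p,q}\hookrightarrow H^{s}_{p}$ for $q\le\min(p,2)$ covers the case $q\le 2$, with the restriction $q\le p$ that you correctly flag.

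The one place where your reasoning misidentifies the mechanism is the origin of the loss $\varepsilon$ in the weight. After the retraction--coretraction onto sequence spaces, the two endpoints are $\ell^{p}(e^{\alpha_{0}k};H^{s_{0}}_{p})$ and $\ell^{p}(e^{\alpha_{1}k};H^{s_{1}}_{p})$ with $\alpha_{0}=\theta(\gamma_{0}-\gamma_{1})$ and $\alpha_{1}=-(1-\theta)(\gamma_{0}-\gamma_{1})$. When $\gamma_{0}\neq\gamma_{1}$ these weights are genuinely different, and the classical theorem on interpolation of weighted sequence spaces (Bergh--L\"ofstr\"om, Thm.~5.6.1, or Triebel, 1.18) yields $(\ell^{p}(w_{0};A_{0}),\ell^{p}(w_{1};A_{1}))_{\theta,q}=\ell^{q}(w_{0}^{1-\theta}w_{1}^{\theta};(A_{0},A_{1})_{\theta,q})$: the \emph{outer} exponent becomes $q$, not $p$. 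The interpolated weight exponent does vanish, as you say, but the outcome is an $\ell^{q}$-amalgam of local Besov spaces, not $B^{s}_{p,q}(\R^{n+1})$, which is the $\ell^{p}$-amalgam. The $\varepsilon$ is exactly what converts $\ell^{q}$ back into $\ell^{p}$: since the pushed-forward functions are supported in $\{t\ge -T_{0}\}$, H\"older's inequality gives $\|(e^{-\varepsilon k}a_{k})\|_{\ell^{p}}\le\|(a_{k})\|_{\ell^{q}}\,\|(e^{-\varepsilon k})_{k\ge -T_{0}}\|_{\ell^{r}}<\infty$ with $1/p=1/q+1/r$ when $q>p$ (and trivially $\ell^{q}\hookrightarrow\ell^{p}$ when $q\le p$). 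Attributing the $\varepsilon$ to ``minor mismatches introduced by the coretraction'' misstates this; without the $\ell^{q}$-to-$\ell^{p}$ conversion the embedding into an $\ell^{p}$-based Mellin--Sobolev space would fail for $q>p$. Once you replace that sentence by this argument, the proof is complete.
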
 

Conversely we have, with the same notation and a very similar proof:
\begin{lemma}\label{int2}For arbitrary $\gd,\gve>0$, 
$$\cH_p^{s+\gd,\gg+\gve}(\B)\hookrightarrow (\cH_p^{s_0,\gg_0}(\B),\cH_p^{s_1,\gg_1}(\B))_{\gt,q}.
$$
\end{lemma}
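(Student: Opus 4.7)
My plan is to adapt the proof of Lemma \ref{int} \cite[Lemma 5.4]{CSS1}, reversing the direction of the embedding. Using Definition \ref{dms} with the partition of unity $\{\varphi_{j}\}$ and the cut-off $\omega$, it suffices to establish the analogous statement locally. Away from the boundary the claim reduces to the classical real-interpolation identity $(H^{s_{0}}_{p}(\R^{n+1}), H^{s_{1}}_{p}(\R^{n+1}))_{\theta,q} = B^{s}_{p,q}(\R^{n+1})$ combined with $H^{s+\delta}_{p}(\R^{n+1}) \hookrightarrow B^{s}_{p,q}(\R^{n+1})$. Near the boundary, after pushing forward via $\cS_{\gamma_{i}}$, the norm $\|u\|_{\cH^{s_{i},\gamma_{i}}_{p}(\B)}$ becomes, up to equivalence in a fixed chart, $\|e^{\alpha_{i} t} v\|_{H^{s_{i}}_{p}(\R^{n+1})}$ with $\alpha_{i} = \gamma_{i} - (n+1)/2$ and $v(t,y) = (\omega \varphi_{j} u)(e^{-t},y)$ supported in $\{t \geq 0\}$.

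The model problem is then: for $v$ supported in $\{t \geq 0\}$ with $\|e^{(\alpha+\varepsilon) t} v\|_{H^{s+\delta}_{p}} < \infty$, where $\alpha = (1-\theta)\alpha_{0} + \theta \alpha_{1}$, show that $v \in (X_{0}, X_{1})_{\theta,q}$ with $X_{i} = \{w : \|e^{\alpha_{i} t} w\|_{H^{s_{i}}_{p}} < \infty\}$. I verify this via the K-functional. For each $\tau > 0$, split $v = v_{0,\tau} + v_{1,\tau}$ using a dyadic frequency cut-off at scale $\sim \tau^{-1/(s_{1}-s_{0})}$ (and, if needed, a dyadic $t$-localization to decouple the weight from the frequency projector). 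Bernstein-type estimates together with the boundedness of multiplication by $e^{-\varepsilon t}$ on $H^{\sigma}_{p}(\R^{n+1})$ for functions supported in $\{t \geq 0\}$ yield
$$K(\tau, v; X_{0}, X_{1}) \lesssim \tau^{\theta} \|e^{(\alpha+\varepsilon) t} v\|_{H^{s+\delta}_{p}}$$
with strict $\tau$-decay slack, so that $\int_{0}^{\infty} (\tau^{-\theta} K(\tau,v))^{q} \frac{d\tau}{\tau} < \infty$; pulling back to $\B$ places $u$ in the claimed interpolation space.

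The main obstacle is that Littlewood-Paley projectors do not commute with multiplication by $e^{\alpha t}$, so the frequency-based Sobolev interpolation and the weight interpolation cannot be handled independently. The resolution exploits the support condition $t \geq 0$: on this half-space $e^{-\varepsilon t}$ is a bounded multiplier on $H^{\sigma}_{p}(\R^{n+1})$, and a dyadic $t$-decomposition $v = \sum_{k \geq 0} \chi_{k} v$ reduces the weight on each $\chi_{k} v$ to the scalar $e^{\alpha k}$ up to bounded factors, so Bernstein applies piecewise and the pieces can be recombined in $\ell^{p}$. The slack $\delta > 0$ absorbs the Sobolev-vs-Besov gap that is always present when $q \neq p$ in real interpolation of Bessel-potential spaces, while $\varepsilon > 0$ absorbs the weight discrepancy between the ambient exponent $\alpha + \varepsilon$ and the interpolated exponent $(1-\theta)\alpha_{0} + \theta \alpha_{1}$.
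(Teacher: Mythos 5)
The paper offers no independent argument here---it simply points back to the proof of Lemma \ref{int}, i.e.\ \cite[Lemma 5.4]{CSS1}---so your reduction via Definition \ref{dms} to a weighted model problem on the half-cylinder $\{t\ge 0\}$ is the intended route, and the statement is true. The gap sits exactly at the step you single out as the crux. After the $t$-localization $v=\sum_k\chi_k v$, the weight $e^{\alpha_i t}$ is comparable to the constant $e^{\alpha_i k}$ \emph{only on the slab} $\mathrm{supp}\,\chi_k$; the frequency-truncated pieces $P_{\le N}(\chi_k v)$ and $P_{>N}(\chi_k v)$ are no longer supported there. Their tails decay rapidly but only polynomially in $\mathrm{dist}(t,\mathrm{supp}\,\chi_k)$, while the weight grows exponentially, so ``Bernstein applies piecewise'' is precisely the assertion that fails: you cannot control $\|e^{\alpha_i t}P_{\le N}(\chi_k v)\|_{H^{s_i}_p}$ by $e^{\alpha_i k}\|P_{\le N}(\chi_k v)\|_{H^{s_i}_p}$. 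The observation that $e^{-\gve t}$ is a bounded multiplier on $H^{\gs}_p$ for functions supported in $t\ge0$ is correct but does not touch this difficulty. A second, independent problem: when the orderings of smoothness and weight disagree (say $s_0<s_1$ but $\gg_0>\gg_1$), a frequency cutoff depending only on $\tau$ cannot work; the cutoff must also grow exponentially in $k$, and one has to verify that the two exponential rates are compatible (they are, but only thanks to the slacks $\gd,\gve>0$, and this needs to be checked).

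Both problems disappear if you reverse the order of operations and never apply a Fourier multiplier to a weighted function. Estimate $\|v\|_{(X_0,X_1)_{\gt,q}}\le\sum_k\|\chi_k v\|_{(X_0,X_1)_{\gt,q}}$ by the triangle inequality. For $w$ supported in $[k,k+2]$, take a near-optimal decomposition $w=w_0+w_1$ for the \emph{unweighted} couple $(H^{s_0}_p,H^{s_1}_p)$ and multiply it by a fattened cutoff $\tilde\chi_k$; this restores compact support without increasing the unweighted norms, giving $K(\tau,w;X_0,X_1)\lesssim e^{\alpha_0 k}\,K\bigl(\tau e^{(\alpha_1-\alpha_0)k},w;H^{s_0}_p,H^{s_1}_p\bigr)$. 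The exact scaling $\|\cdot\|_{(\lambda_0Y_0,\lambda_1Y_1)_{\gt,q}}=\lambda_0^{1-\gt}\lambda_1^{\gt}\|\cdot\|_{(Y_0,Y_1)_{\gt,q}}$ together with the classical embedding $H^{s+\gd}_p\hookrightarrow B^{s}_{p,q}=(H^{s_0}_p,H^{s_1}_p)_{\gt,q}$ then yields $\|\chi_k v\|_{(X_0,X_1)_{\gt,q}}\lesssim e^{\alpha k}\|\chi_k v\|_{H^{s+\gd}_p}\lesssim e^{-\gve k}a_k$ with $(a_k)\in\ell^p$ by finite overlap, and the sum over $k$ converges by H\"older. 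Alternatively, your original plan can be salvaged by replacing the sharp Littlewood--Paley projectors with convolution against compactly supported kernels (enlarging supports only by $O(1)$), or by conjugating the multipliers with $e^{\alpha_i t}$, which is legitimate because their symbols extend holomorphically to a strip; either way the write-up needs this repair.
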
 

In the case of complex interpolation between Mellin-Sobolev spaces of the same weight, we have a sharp result on the order of the resulting Mellin-Sobolev space as follows.

\begin{lemma}\label{sharpint}
Let $s_0,s_1,\gg\in \R$ and $0<\gt<1$. Then, 
$$
[\cH_p^{s_0,\gg}(\B),\cH_p^{s_1,\gg}(\B)]_{\gt}= \cH_p^{s,\gg}(\B), 
$$
with $s=(1-\gt)s_0+\gt s_1$.
\end{lemma}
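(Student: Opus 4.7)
The plan is to reduce the claim to the classical Calder\'on interpolation identity for Bessel potential spaces, using that, for fixed weight $\gg$, the scale $\{\cH_p^{s,\gg}(\B)\}_{s\in\R}$ is realized as a retract of a product of standard Bessel potential spaces via an $s$-\emph{independent} retraction/coretraction pair.

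Set $E^s:=\bigoplus_{j=1}^N H_p^s(\R^{n+1})\oplus H_p^s(\B)$ and define the bounded coretraction
$$T_\gg u:=\bigl((\cS_\gg(1\otimes\kappa_j)_*(\go\gvp_j u))_{j=1}^N,\,(1-\go)u\bigr),$$
which, by Definition \ref{dms}, is a bi-continuous embedding of $\cH_p^{s,\gg}(\B)$ onto its image in $E^s$. Next, choose enlarged cut-offs $\tilde\gvp_j$ and $\tilde\go$ with $\tilde\gvp_j\equiv 1$ on $\mathrm{supp}\,\gvp_j$ and $\tilde\go\equiv 1$ on $\mathrm{supp}\,\go$, and a smooth function $\chi$ on $\B$ that vanishes in a neighborhood of the conical tip and equals $1$ on $\mathrm{supp}(1-\go)$. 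The formula
$$S_\gg(v_1,\ldots,v_N,w):=\sum_{j=1}^N \tilde\gvp_j\tilde\go\,(1\otimes\kappa_j)^*\cS_\gg^{-1}v_j+\chi w$$
defines a bounded left inverse $S_\gg\colon E^s\to\cH_p^{s,\gg}(\B)$: multiplication by smooth compactly supported cut-offs, pullback by the diffeomorphisms $\kappa_j$, and the substitution $x=e^{-t}$ with the exponential weight underlying $\cS_\gg$ all act boundedly on $H_p^s(\R^{n+1})$ for every $s\in\R$, while $\chi$ vanishing near the tip ensures $\chi w\in\cH_p^{s,\gg}(\B)$. A direct computation using the cut-off properties gives $S_\gg T_\gg u=\go u+\chi(1-\go)u=u$. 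Crucially, both $T_\gg$ and $S_\gg$ are $s$-independent.

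By the classical complex interpolation identity for Bessel potential spaces, $[H_p^{s_0}(\R^{n+1}),H_p^{s_1}(\R^{n+1})]_\gt=H_p^s(\R^{n+1})$, and similarly on the smooth manifold $\B$; since complex interpolation commutes with finite direct sums, $[E^{s_0},E^{s_1}]_\gt=E^s$. The standard retract principle for interpolation then yields
$$[\cH_p^{s_0,\gg}(\B),\cH_p^{s_1,\gg}(\B)]_\gt=S_\gg\bigl([E^{s_0},E^{s_1}]_\gt\bigr)=S_\gg(E^s)=\cH_p^{s,\gg}(\B),$$
with equivalent norms. The main technical hurdle is arranging the cut-offs so that $S_\gg$ is well defined into $\cH_p^{s,\gg}(\B)$ with uniform operator norm on the interpolation scale; this is a routine partition-of-unity argument once the $s$-independence of the retract structure is noticed.
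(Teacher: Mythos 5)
Your proof is correct and follows essentially the same route as the paper's: both reduce the claim to Calder\'on's identity $[H_p^{s_0},H_p^{s_1}]_\gt=H_p^s$ by localizing with the chart/cut-off data and the map $\cS_\gg$ from Definition \ref{dms}. The paper phrases this as a chain of equivalences invoking the linearity of the push-forward and the complemented-subspace (retract) property of complex interpolation, whereas you make the $s$-independent retraction/coretraction pair explicit — a somewhat more carefully spelled-out version of the same argument.
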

\begin{proof}
Recall that the statement is true for the standard Sobolev spaces in $\mathbb{R}^{n}$ (see e.g. Equation 4.(2.18) in \cite{Tay1}). Let $\go\equiv 1 $ on $[0,1/2)$ and let $(V_i,\tau_i)_i$ 
with $\tau_{i}:V_{i}\subset\B\to\R^n$, $i=1,\ldots,M,$ be a covering of $\B\setminus\{[0,1/3)\times\partial\B\}$ by coordinate charts and $\{\psi_{i}\}$ a subordinate partition of unity. With the notation as in Definition \ref{dms}, we have the following equivalence
\begin{eqnarray*}
\lefteqn{u\in \cH_p^{s,\gg}(\B)}\\
&\Longleftrightarrow& \cS_{\gamma}(1\otimes\kappa_{j})_{\ast}(\omega\varphi_{j}u), \,\,\, ( \tau_{i})_{\ast}(\psi_{i}(1-\omega)u)\in H_{p}^{s}(\mathbb{R}^{n}), \,\,\, \forall\, i,j\\
&\Longleftrightarrow& \cS_{\gamma}(1\otimes\kappa_{j})_{\ast}(\omega\varphi_{j}u), \,\,\, ( \tau_{i})_{\ast}(\psi_{i}(1-\omega)u)\in [H_p^{s_0,\gg}(\mathbb{R}^{n}),H_p^{s_1,\gg}(\mathbb{R}^{n})]_{\gt}, \,\,\, \forall\, i,j\\
&\Longleftrightarrow& \omega\varphi_{j}u, \,\,\, \psi_{i}(1-\omega)u\in [\cH_p^{s_0,\gg}(\B),\cH_p^{s_1,\gg}(\B)]_{\gt}, \,\,\, \forall\, i,j \\
&\Longleftrightarrow& \omega u, \,\,\, (1-\omega)u\in [\cH_p^{s_0,\gg}(\B),\cH_p^{s_1,\gg}(\B)]_{\gt}\\
&\Longleftrightarrow& u\in [\cH_p^{s_0,\gg}(\B),\cH_p^{s_1,\gg}(\B)]_{\gt},
\end{eqnarray*}
where we have used the linearity of the push-forward together with Proposition 4.2.1 in \cite{Tay1}. 
\end{proof}

\subsection{Cone differential operators}
A cone differential operator or conically degenerate operator $A$ of order $\mu$ is a differential operator of order $\mu$ on the interior $\B^\circ$ of $\B$ which, in local coordinates near the boundary, can be written in the form 
\begin{eqnarray}\label{conediffop}
A=x^{-\mu}\sum_{j=0}^\mu a_j(x) (-x\partial_x)^j\quad \mbox{with} \quad a_j\in C^{\infty}([0,1),\mathrm{Diff}^{\mu-j}(\partial\mathbb{B})).
\end{eqnarray} 
The operator $A$ is called $\B$-elliptic (or degenerate elliptic), if the principal pseudodifferential symbol $\gs^\mu_\psi(A)$ is invertible on $T^*\B^\circ$ and, moreover, in local coordinates 
$(x,y)$ near the boundary and corresponding covariables 
$(\xi,\eta)$, the rescaled principal symbol 
\begin{eqnarray}\label{rps}
x^{\mu}\gs^\mu_\psi(A)(x,y,\xi/x,\eta) 
= \sum_{j=0}^\mu\gs^{\mu-j}_\psi (a_j(x))(y,\eta)(-i\xi)^j
\end{eqnarray}
is invertible up to $x=0$. 

Moreover, one associates to a cone differential operator its conormal symbol. This is the operator 
family $\gs_M(A):\C\to \cL(H^s_p(\partial \B),H^{s-\mu}_p(\partial \B))$ defined by
\begin{eqnarray}\label{conormal}
\gs_M(A)(z) =\sum_{j=0}^\mu a_j(0) z^j: H^s_p(\partial \B)\to H^{s-\mu}_p(\partial \B).
\end{eqnarray}
One is mostly interested in the points where $\gs_M(A)$ is not invertible. 
For this, the precise choice of $s$ and $p$ in \eqref{conormal} is irrelevant due to the spectral invariance of differential operators in these spaces; it is often convenient to work with $s=0$ and $p=2$. 

The Laplacian $\Delta$ induced by the metric $g$ is a second order cone differential operator. 
Near the boundary $\gD$ can be written in the form
\begin{gather}\label{lap}
\Delta=\frac{1}{x^2}\big((x\partial_{x})^{2}+(n-1)(x\partial_{x})+\Delta_{\partial}\big),
\end{gather}
where $\Delta_{\partial}$ is the boundary Laplacian induced by $h$. 
Hence $\gD$ is $\B$-elliptic. 
The conormal symbol $\gs_M(\Delta)$ 
of $\Delta$ 
is given by
\begin{eqnarray}\label{conormalDelta}
\sigma_M(\Delta)(z)=z^2-(n-1)z+\Delta_\partial.
\end{eqnarray}

\subsection{Closed extensions}
A conically degenerate operator $A$ acts in a natural way on the scales of weighted Mellin-Sobolev spaces: 
$$A: \cH^{s+\mu,\gg+\mu}_p(\B)\to \cH^{s,\gg}_p(\B)$$
is bounded for all $s,\gg\in\R$, $1<p<\infty$.

Sometimes it is necessary to consider a cone differential operator $A$ as an unbounded operator in a fixed space $\cH^{s,\gg}_p(\B)$. 
If $A$ is $\B$-elliptic, then the domain of the minimal extension, i.e. the closure of $A$ considered as an operator on $C^\infty_c(\B^\circ)$, is 
\begin{eqnarray}\label{dmin} 
\cD(A_{s,\min})=\Big\{ u\in \bigcap_{\gve>0}\cH_p^{s+\mu,\gg+\mu-\gve}(\B): 
x^{-\mu}\sum_{j=0}^\mu a_j(0)(-x\partial_x)^ju\in \cH^{s,\gg}_p(\B)\Big\}.
\end{eqnarray}
If, in addition, the conormal symbol of $A$ is invertible for all $z$ with 
$\Re z = (n+1)/2-\gg-\mu$, then
$$\cD(A_{s,\min})= \cH^{s+\mu,\gg+\mu}_p(\B).$$ 

The domain of the maximal extension, defined by
$$\cD(A_{s,\max})=\{u\in \cH^{s,\gg}_p(\B): Au\in \cH^{s,\gg}_p(\B)\},$$
satisfies 
$$
\cD(A_{s,\max})=\cD(A_{s,\min}) \oplus \cE,
$$
where $\cE$ is a finite-dimensional space consisting of linear combinations of functions of the form $x^{-\rho}\log^k x \, c(y)$ with $\rho\in \C$, $k\in \N_0$ and a smooth function $c$ on the cross-section. The space $\cE$ can be chosen independent of $s$. This result has a long history, see e.g. \cite{BrueningSeeley88}, \cite{Le}, \cite{Sh}, \cite{se}; the present version is due to Gil, Krainer and Mendoza \cite{GKM}.

\subsection{Extensions of the Laplacian}
We are interested in the values of $z$, for which $\gs_M(\Delta)$ is not invertible.
We denote by
 $0=\lambda_0>\lambda_1>\ldots$ the distinct eigenvalues of $\Delta_\partial$ and by
 $E_0,\,E_1,\ldots$ the corresponding eigenspaces. 
Moreover, we let $\pi_j\in\cL(L^2(\partial\B))$ be the orthogonal projection 
onto $E_j$.
 
The {\em non}-bijectivity points of $\sigma_M(\Delta)$ are the 
points $z=q_j^+$ and $z=q_j^-$ with 
\begin{equation}\label{pjpm}
q_j^\pm=\mbox{$\frac{n-1}{2}\pm
\sqrt{\big(\frac{n-1}{2}\big)^2-\lambda_j}$},
\qquad j\in\N_0.
\end{equation}
Note the symmetry $q_j^+=(n-1)-q_j^-$. 
 It is straightforward to see that 
\begin{eqnarray}\label{inverse}
(z^2-(n-1)z+\Delta_\partial)^{-1}=
 \sum_{j=0}^\infty\frac{1}{(z-q_j^+)(z-q_j^-)}\pi_j.
\end{eqnarray}

Hence, in case $\text{\rm dim}\,\B\not=2$, where the $q^\pm_j$ are all different, 
the inverse to $\gs_M(\Delta)$ has only simple poles in the points $q^\pm_j$.
For $\text{\rm dim}\,\B=2$ the poles at $q_j^\pm$, $j\not=0$, are simple, 
while there is a double pole at $q_0^+=q_0^-=0$.

With $q_j^\pm$, $j\not=0$, we associate the function spaces 
 $$\cE_{q_j^\pm}=\omega\,x^{-q_j^\pm}\otimes E_j=
 \{\omega(x)\,x^{-q_j^\pm}\,e(y): e\in E_j\},\ j\in\N.
 $$
For $j=0$ we let
\begin{equation}\label{ep0pm}
\cE_{q_0^\pm}=
 \begin{cases}
 \go\otimes E_0+\go\log x\otimes E_0, & 
 \text{\rm dim}\,\B=2\\
 \omega\,x^{q_0^\pm} \otimes E_0,& \text{\rm dim}\,\B\not=2
 \end{cases}.
 \end{equation}
 For later use note that $\gD$ maps the spaces $\cE_{q_j^\pm}$ to 
 $C^\infty_c(\B^\circ)$. 
 
 Furthermore, we introduce the sets $I_\gg$, $\gg\in\R$, by 
 $$I_\gamma=\{q_j^\pm: j\in\N_0\}\cap
 \,\mbox{$]\frac{n+1}{2}-\gamma-2,\frac{n+1}{2}-\gamma[$}.$$

The following is Proposition 5.1 in \cite{Sh} combined with Theorem 3.6 in \cite{GKM}:
 
 \begin{proposition}\label{maxmin}
 The domain of the maximal extension of $\Delta$ in
 $\cH^{s,\gamma}_p(\B)$ is 
 $$\cD(\Delta_{s,\max})=\cD(\Delta_{s,\min})\oplus
 \bigoplus_{q_j^\pm\in I_\gamma}\cE_{q_j^\pm}.$$
 In case $q_j^\pm\not=\frac{n+1}{2}-\gamma-2$ for all $j$, the minimal 
 domain is $\cD(\Delta_{s,\min})=\cH^{s+2,\gamma+2}_p(\B)$. 
 \end{proposition}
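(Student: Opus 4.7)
The plan is to combine two known results. The general theorem of Gil, Krainer and Mendoza (Theorem 3.6 in \cite{GKM}), already summarized in the paragraph preceding \eqref{dmin}, asserts that for any $\B$-elliptic cone differential operator $A$ of order $\mu$ the maximal domain decomposes as
$$\cD(A_{s,\max}) = \cD(A_{s,\min}) \oplus \cE,$$
where $\cE$ is a finite-dimensional asymptotics space whose elements are linear combinations of $\omega(x)\, x^{-\rho} \log^k x\, c(y)$. Since $\Delta$ is $\B$-elliptic by \eqref{lap}, this decomposition is available, and the task reduces to identifying $\cE$ and, under the supplementary hypothesis, to computing the minimal domain explicitly.

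To identify $\cE$, I would invoke the explicit description of the asymptotics space via the conormal symbol, which is essentially the content of Proposition 5.1 in \cite{Sh}. The exponents $\rho$ appearing in $\cE$ are precisely the poles of $\sigma_M(\Delta)(z)^{-1}$ lying in the strip $\Re z \in \,]\tfrac{n+1}{2}-\gamma-2,\tfrac{n+1}{2}-\gamma[$, with multiplicity governed by the pole order, and the associated coefficients $c(y)$ come from the range of the corresponding residue operator. The formula \eqref{inverse} shows that these poles are exactly the points $q_j^\pm \in I_\gamma$, with residue proportional to the projection $\pi_j$ onto $E_j$.

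Next I would split into two dimensional regimes. If $\dim\B \neq 2$, the numbers $q_j^+$ and $q_j^-$ coincide only when $\lambda_j = \bigl(\tfrac{n-1}{2}\bigr)^2$; since $\lambda_j \le 0$ and $\bigl(\tfrac{n-1}{2}\bigr)^2 > 0$, this never happens. Hence every pole of $\sigma_M(\Delta)^{-1}$ is simple, and each $q_j^\pm \in I_\gamma$ contributes exactly the subspace $\omega x^{-q_j^\pm} \otimes E_j$ of \eqref{ep0pm}. If $\dim\B = 2$ (so $n=1$), the coalescence $q_0^+=q_0^-=0$ yields a second order pole at $z=0$, and the Laurent expansion of \eqref{inverse} at this pole produces, in addition to the constant asymptotics $\omega\otimes E_0$, a logarithmic term $\omega\log x \otimes E_0$; the remaining $q_j^\pm$ with $j\neq 0$ are still simple and contribute $\omega x^{-q_j^\pm}\otimes E_j$. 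Putting these contributions together reproduces the direct sum $\bigoplus_{q_j^\pm \in I_\gamma} \cE_{q_j^\pm}$.

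For the statement on the minimal domain, I would appeal directly to the characterization already mentioned after \eqref{dmin}: when $\sigma_M(\Delta)(z)$ is invertible on the whole line $\Re z = \tfrac{n+1}{2}-(\gamma+2)$, the minimal domain coincides with the natural Mellin-Sobolev space $\cH^{s+2,\gamma+2}_p(\B)$. The hypothesis $q_j^\pm \neq \tfrac{n+1}{2}-\gamma-2$ for all $j$ is exactly the statement that no non-bijectivity point of the conormal symbol lies on this line, so this conclusion applies. The main subtlety I expect is the bookkeeping in the two-dimensional case: one must check that the logarithmic basis element produced by the double pole at $z=0$ genuinely belongs to $\cD(\Delta_{s,\max})\setminus \cD(\Delta_{s,\min})$, which follows from the fact that $\Delta$ annihilates $\omega\log x \otimes E_0$ modulo $C^\infty_c(\B^\circ)$ (as noted before the definition of $I_\gamma$) while this function carries an asymptotic that cannot be approximated in the minimal-domain norm.
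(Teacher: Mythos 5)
Your proposal is correct and follows essentially the same route as the paper, whose proof of Proposition \ref{maxmin} consists precisely of citing Proposition 5.1 in \cite{Sh} together with Theorem 3.6 in \cite{GKM}; you simply unpack how those results apply, identifying the relevant poles of $\sigma_M(\Delta)(z)^{-1}$ via \eqref{inverse}, noting that they are simple except for the double pole at $z=0$ when $\dim\B=2$, and invoking the standard criterion (invertibility of the conormal symbol on the line $\Re z=\tfrac{n+1}{2}-\gamma-2$) for the minimal domain to equal $\cH^{s+2,\gamma+2}_p(\B)$. No gaps.
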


\begin{corollary}\label{domdelta}
The domains of the closed extensions of $\Delta$ are the sets of the form 
$\cD(\Delta_{s,\min})\oplus\underline\cE,$
where $\underline\cE$ is any subspace of 
$ \mathop{\oplus}_{q_j^\pm\in I_\gamma}\cE_{q_j^\pm}.$
\end{corollary}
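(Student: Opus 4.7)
My plan is to deduce this corollary directly from Proposition \ref{maxmin} together with the standard observation that closed extensions of a closable operator correspond bijectively to closed intermediate subspaces (in graph norm) between the minimal and maximal domains. The key simplification here is that the quotient $\cD(\Delta_{s,\max})/\cD(\Delta_{s,\min})$ is the finite-dimensional space $\bigoplus_{q_j^\pm\in I_\gamma}\cE_{q_j^\pm}$, so every linear subspace of this quotient is automatically closed. This reduces the problem to pure linear algebra once the maximal/minimal decomposition is in place.

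First, I would recall that a closed extension $A$ of $\Delta$ acting in $\cH^{s,\gg}_p(\B)$ is, by definition, a closed operator satisfying $\Delta_{s,\min}\subseteq A\subseteq \Delta_{s,\max}$. In particular, its domain $\cD(A)$ is a subspace of $\cD(\Delta_{s,\max})$ containing $\cD(\Delta_{s,\min})$. By Proposition \ref{maxmin} we can decompose
\begin{equation*}
\cD(\Delta_{s,\max})=\cD(\Delta_{s,\min})\oplus\bigoplus_{q_j^\pm\in I_\gamma}\cE_{q_j^\pm},
\end{equation*}
and setting $\cE:=\pi(\cD(A))$, where $\pi$ is the projection onto the second summand along the first, we obtain $\cD(A)=\cD(\Delta_{s,\min})\oplus\cE$ with $\cE\subseteq\bigoplus_{q_j^\pm\in I_\gamma}\cE_{q_j^\pm}$.

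Conversely, given an arbitrary subspace $\cE\subseteq\bigoplus_{q_j^\pm\in I_\gamma}\cE_{q_j^\pm}$, I would show that $A$ defined on $\cD(A):=\cD(\Delta_{s,\min})\oplus\cE$ as the restriction of $\Delta_{s,\max}$ is closed. For this, note that $\Delta_{s,\max}$ is closed (as the adjoint of a densely defined operator, or directly because its domain is defined by an inclusion of the image of a bounded operator). Since $\cE$ is a finite-dimensional, hence closed, subspace of $\cD(\Delta_{s,\max})$ (in any topology, in particular in the graph norm of $\Delta_{s,\max}$), and $\cD(\Delta_{s,\min})$ is closed in the graph norm of $\Delta_{s,\max}$ by definition of the minimal extension, their algebraic direct sum is closed in the graph norm. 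Consequently $A$ is closed.

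No step here is really the hard part; the only subtlety is to verify that the algebraic direct sum of $\cD(\Delta_{s,\min})$ with a finite-dimensional complement is closed in the graph norm of $\Delta_{s,\max}$, which follows from the general fact that the sum of a closed subspace and a finite-dimensional subspace of a Banach space is closed. Combining the two directions gives the bijective correspondence claimed in Corollary \ref{domdelta}.
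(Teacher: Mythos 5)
Your argument is correct and is exactly the (implicit) argument the paper relies on: Corollary \ref{domdelta} is stated as an immediate consequence of Proposition \ref{maxmin} via the standard correspondence between closed extensions and graph-closed subspaces lying between $\cD(\Delta_{s,\min})$ and $\cD(\Delta_{s,\max})$, with finite-dimensionality of the quotient guaranteeing that every intermediate subspace is automatically closed. Nothing is missing.
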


\begin{definition}\label{ext}
Given a subspace $\underline\cE_{q_j^\pm}$ of $\cE_{q_j^\pm},$ 
we define the space $\underline{\cE}_{q_j^\pm}^\perp$ as follows: 
 \begin{itemize}
 \item[{\rm i)}] If either $q_j^\pm\not=0$ or 
$\dim(\B)\not=2$, there exists a unique subspace 
$\underline{E}_j\subseteq E_j$ such that 
$\underline{\cE}_{q_j^\pm}=\omega\, x^{-q_j^\pm}\otimes \underline{E}_j$. 
Then we set 
$$\underline{\cE}_{q_j^\pm}^\perp
=\omega\,x^{-q_j^\mp}\otimes\underline{E}_j^\perp,$$
where $\underline{E}_j^\perp$ is the orthogonal complement of 
$\underline{E}_j$ in $E_j$ with respect to the $L^2(\partial\B)$-scalar product. 
\item[{\rm ii)}] For $\dim(\B)=2$ and $q_0^\pm=0$ define 
$\underline{\cE}_0^\perp=\{0\}$ if $\underline{\cE}_0=\cE_0$, 
$\underline{\cE}_0^\perp=\cE_0$ if $\underline{\cE}_0=\{0\}$, and 
$\underline{\cE}_0^\perp=\underline{\cE}_0$ if 
$\underline{\cE}_0=\go\otimes E_0$. 
\end{itemize}
\end{definition}
 Note that $\underline{\cE}_{q_j^\pm}^\perp$ is a subspace of
 $\cE_{q_j^\mp}$.
 
We will now confine ourselves to closed extensions $\underline{\Delta}_{s}$ of $\gD$ 
in $\cH^{s,\gg}_p(\B)$ with domains 
$$\cD(\underline{\Delta}_{s})=\cD(\Delta_{s,\min}) 
\oplus\mathop{\mbox{$\bigoplus$}}_{q_j^\pm\in I_\gamma}\underline{\cE}_{q_j^\pm}
\subseteq \cH^{s,\gg}_p(\B)$$
chosen according to the following rules: 
\begin{itemize}
 \item[{\rm(i)}] If $q_j^\pm\in I_{\gamma}\cap I_{-\gamma}$, then 
 $\underline{\mathcal{E}}_{q_j^\pm}^\perp=\underline{\mathcal{E}}_{(n-1)-q_j^\pm}$. 
 \item[{\rm(ii)}] If $\gamma\ge0$ and $q_j^\pm\in I_{\gamma}\setminus I_{-\gamma}$,
 then $\underline{\mathcal{E}}_{q_j^\pm}=\mathcal{E}_{q_j^\pm}$. 
 \item[{\rm(iii)}] If $\gamma\le0$ and 
 $q_j^\pm\in I_{\gamma}\setminus I_{-\gamma}$, 
then $\underline{\mathcal{E}}_{q_j^\pm}=\{0\}$. 
\end{itemize}
 In particular, $\cD(\underline{\Delta}_{s})=\cD(\Delta_{s,\max})$ if $\gamma\ge1$ and $\cD(\underline{\Delta}_{s})=\cD(\Delta_{s,\min})$ if $\gamma\le-1$. 
 
\begin{theorem}\label{elldomain2} Let $s\ge0$, $\gt\in[0,\pi[$, and $\phi>0$. For $|\gg|< \dim(\B)/2$ let $\underline{\Delta}_{s}$ be an extension with domain in $\cH^{s,\gg}_p(\B)$ chosen as above. Then $c-\underline \Delta_s\in \cP(\gt)\cap\mathcal{BIP}(\phi)$ for suitably large $c>0$. 
\end{theorem}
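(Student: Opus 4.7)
My plan is to reduce Theorem~\ref{elldomain2} to the case $s=0$, $\gamma\ge 0$ treated in \cite{RS0,RS,Sh}, using $\cH^{0,0}_2(\B)$-duality to cover the negative-weight range and cone-pseudodifferential regularity to pass from $s=0$ to arbitrary $s\ge 0$. The hypothesis $|\gamma|<\dim(\B)/2$ keeps the weight line $\Re z=(n+1)/2-\gamma-2$ strictly between the poles $q_0^\pm$ of the inverse conormal symbol \eqref{inverse}, and the rules (i)--(iii) of Definition~\ref{ext} then single out a canonical, self-dual family of extensions in this range. At $\gamma=0$, $p=2$ this is the Friedrichs extension, for which the assertion is immediate from the self-adjoint holomorphic functional calculus (since $-\Delta\ge 0$ already gives $\|(c-\Delta)^{it}\|\le 1$); for $0\le\gamma<\dim(\B)/2$ with $s=0$ and general $p\in(1,\infty)$, the statement is covered by the parametrix-based BIP arguments of the cited references.

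For $-\dim(\B)/2<\gamma<0$ and $s=0$ I would invoke duality. By Theorem~\ref{dual}, $\cH^{0,-\gamma}_{p'}(\B)$ is the dual of $\cH^{0,\gamma}_p(\B)$ under the $\cH^{0,0}_2(\B)$-pairing. The symmetry $q_j^+=(n-1)-q_j^-$ and the orthogonality prescription in Definition~\ref{ext}(i) are built precisely so that the formal adjoint of $\underline{\Delta}_0$ is an extension of the same canonical form on $\cH^{0,-\gamma}_{p'}(\B)$, now with positive weight $-\gamma>0$. By the previous step, this adjoint has BIP and is sectorial, and both properties transfer back to $\underline{\Delta}_0$ through the identity $\|A^{it}\|=\|(A^*)^{it}\|$ in reflexive UMD spaces. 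To move from $s=0$ to arbitrary $s\ge 0$, I would use that $(c+\lambda-\underline{\Delta}_0)^{-1}$, constructed from the cone-calculus parametrix, is a conic pseudodifferential operator of order $-2$, and hence restricts to a bounded map $\cH^{s,\gamma}_p(\B)\to\cD(\underline{\Delta}_s)$ with the same uniform resolvent estimate on $S_\theta$. The Dunford integral that produces $(c-\underline{\Delta}_0)^{it}$ then automatically yields $(c-\underline{\Delta}_s)^{it}$ on $\cH^{s,\gamma}_p(\B)$ with the same $Me^{\phi|t|}$ bound.

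The main obstacle is the duality step: one must verify that the rules (i)--(iii) are genuinely self-dual. This amounts to the explicit observation that, for $j\ge 1$ (and also for $j=0$ when $\dim(\B)\ne 2$), the $\cH^{0,0}_2(\B)$-pairing of $\omega x^{-q_j^\pm}e$ with $\omega x^{-q_j^\mp}e'$ collapses, using the Riemannian volume $x^n\sqrt{\det h}\,dx\,dy$ and the identity $q_j^++q_j^-=n-1$, to a non-zero multiple of the $L^2(\partial\B)$-pairing of $e,e'\in E_j$. Combined with the orthogonal-complement prescription in Definition~\ref{ext}, this realizes the biorthogonality between $\underline{\cE}_{q_j^\pm}$ and $\underline{\cE}_{q_j^\pm}^\perp=\underline{\cE}_{(n-1)-q_j^\pm}$ needed to identify the adjoint extension, while the logarithmic double pole at $q_0^+=q_0^-=0$ when $\dim(\B)=2$ is handled by the tailored prescription in Definition~\ref{ext}(ii).
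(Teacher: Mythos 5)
The paper's own proof of Theorem \ref{elldomain2} is a two-line citation: the case $s=0$ is taken from \cite[Theorem 2.9 and Remark 2.10]{RS0} together with Theorems 5.7 and 4.3 of \cite{Sh}, which cover the \emph{whole} range $|\gg|<\dim(\B)/2$ (both signs of the weight) by a direct parameter-dependent parametrix construction; the step to $s>0$ is \cite[Theorem 3.3]{RS}. Your architecture differs in one place: you obtain the negative-weight half $-\dim(\B)/2<\gg<0$ at $s=0$ by dualizing the positive-weight case through the $\cH^{0,0}_2(\B)$-pairing of Theorem \ref{dual}. That route is legitimate --- it is essentially the argument the paper runs later, in Theorem \ref{tadj}, to reach negative $s$ --- and your pairing computation (the volume $x^n\,dx\,dy$ against $x^{-q_j^+-q_j^-}=x^{1-n}$ gives a convergent $x$-integral times the $L^2(\partial\B)$-pairing of eigenfunctions) is the correct heart of the self-duality of rules (i)--(iii) of Definition \ref{ext}. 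For a complete identification of the adjoint domain one still needs more than biorthogonality of the asymptotics terms, namely \cite[Theorem 5.3]{Sh}, exactly as invoked in the proof of Theorem \ref{tadj}; and the transfer of sectoriality and $\mathcal{BIP}$ to the adjoint is Propositions 1.3(v) and 2.6(v) of \cite{DHP}, which require density of $\cD(A^*)$ --- available here by reflexivity.

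The genuine gap is in your passage from $s=0$ to $s>0$. Uniform boundedness of the restricted resolvent on $\cH^{s,\gg}_p(\B)$ does give $c-\underline\gD_s\in\cP(\gt)$, but it does \emph{not} ``automatically yield'' $\|(c-\underline\gD_s)^{it}\|\le Me^{\phi|t|}$: the Dunford integrand for an imaginary power decays only like $|\lambda|^{-1}$ along the contour, so $A^{it}$ is not given by an absolutely convergent integral, and $\mathcal{BIP}$ on the smaller space is strictly stronger than a uniform resolvent estimate there. To control $(c-\underline\gD_0)^{it}$ as an operator on $\cH^{s,\gg}_p(\B)$ one needs the anisotropic parameter-dependent structure of the resolvent in the cone calculus (a smoothing Green remainder plus terms with explicit symbolic and Mellin structure whose imaginary powers are estimated directly); this is precisely the content of \cite[Theorem 3.3]{RS}, which is a theorem, not a formality. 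Two smaller imprecisions: at $\gg=0$, $p=2$, rule (i) forces self-adjointness but does not single out the Friedrichs extension (several self-adjoint choices remain; all become positive after adding $c$, so your conclusion survives); and the line $\Re z=(n+1)/2-\gg-2$ is \emph{not} kept between $q_0^-$ and $q_0^+$ for all $|\gg|<\dim(\B)/2$ --- it can drop below $q_0^-=0$ --- the hypothesis rather controls which $q_j^\pm$ fall into $I_\gg$ and $I_{-\gg}$ and hence which asymptotics terms the rules (i)--(iii) must handle.
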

\begin{proof}
For $s=0$ this follows from \cite[Theorem 2.9 and Remark 2.10]{RS0} with Theorems 5.7 and 4.3 in \cite{Sh}. For $s>0$ we apply Theorem 3.3 in \cite{RS}. 
\end{proof}

\section{Bounded Imaginary Powers for the Laplacian on Negative Order Mellin-Sobolev Spaces}\setcounter{equation}{0}

Let $X$ be a complex Banach space and $X^{\ast}$ its dual. 
The adjoint $A^{\ast}$ of a sectorial operator $A$ with bounded imaginary powers will also be sectorial on the same sector and have bounded imaginary powers with the same power angle, provided $\cD(A^{\ast}) $ is dense in $X^{\ast}$, see Propositions 1.3(v) and 2.6(v) in \cite{DHP}.
We will use this fact in order to extend Theorem \ref{elldomain2} above to the case of Mellin-Sobolev spaces of negative order.

\begin{theorem}\label{tadj}For $s\geq 0$, $|\gamma|<\frac{\mathrm{dim}(\mathbb{B})}{2}$ and $1<p<\infty$ consider the closed extension $\underline{\Delta}_{-s}$ of the Laplacian 
with domain 
\begin{eqnarray}\label{extneg}
\mathcal{D}(\underline{\Delta}_{-s})=\cD(\underline\gD_{-s,\min})\oplus\bigoplus_{q_{j}^{\pm}\in I_{\gamma}}\underline{\mathcal{E}}_{q_{j}^{\pm}},
\end{eqnarray}
where the asymptotics spaces $\underline{\mathcal{E}}_{q_{j}^{\pm}}$ satisfy the conditions {\rm (i), (ii)} and {\rm(iii)} in Definition $\ref{ext}$.
Then, for any $\theta\in[0,\pi)$ and $\phi>0$, there exists some $c>0$ such that $c-\underline{\Delta}_{-s}\in\mathcal{P}(\theta)\cap\mathcal{BIP}(\phi)$.
\end{theorem}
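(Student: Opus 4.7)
The strategy is to realize $\underline{\Delta}_{-s}$ as the Banach space adjoint of a suitable closed extension of $\Delta$ acting on the dual space $\cH^{s,-\gamma}_{p'}(\B)$ of $\cH^{-s,\gamma}_p(\B)$ (cf.\ Theorem~\ref{dual}), and then invoke the duality principle for sectorial operators with bounded imaginary powers quoted in the paragraph preceding the statement. Since $s\geq 0$ and $|-\gamma|=|\gamma|<\dim(\B)/2$, such an extension on the dual space falls within the scope of Theorem~\ref{elldomain2}.

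The first step is to construct a companion extension $\tilde{\Delta}$ of $\Delta$ on $\cH^{s,-\gamma}_{p'}(\B)$ by specifying asymptotic spaces $(\tilde{\cE}_{q_j^\pm})_{q_j^\pm\in I_{-\gamma}}$ satisfying rules~(i)--(iii) of Definition~\ref{ext}, but with $\gamma$ replaced by $-\gamma$. Concretely, for $q_j^\pm\in I_\gamma\cap I_{-\gamma}$ rule~(i) forces $\tilde{\cE}_{q_j^\pm}=\underline{\cE}_{(n-1)-q_j^\pm}^{\perp}$, while for $q_j^\pm$ in only one of the two sets, the trivial choices prescribed by rules~(ii)/(iii) applied to $-\gamma$ are precisely complementary to those made for $\underline{\Delta}_{-s}$. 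Since $s\geq 0$, Theorem~\ref{elldomain2} yields $c-\tilde{\Delta}\in\cP(\theta)\cap\mathcal{BIP}(\phi)$ for all sufficiently large $c>0$.

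The crucial step is to identify $c-\tilde{\Delta}^{*}$ with $c-\underline{\Delta}_{-s}$ under the pairing of Theorem~\ref{dual}. Because $\Delta$ is formally symmetric for the $\cH^{0,0}_2$-pairing, $\tilde{\Delta}^{*}$ is automatically a closed extension of $\Delta$ on $\cH^{-s,\gamma}_p(\B)$, so only the domain requires verification. For $u\in\cD(\underline{\Delta}_{-s})$ and $v\in\cD(\tilde{\Delta})$, the expression $\langle\tilde{\Delta}v,u\rangle-\langle v,\Delta u\rangle$ reduces, modulo minimal-domain contributions that vanish because $C^\infty_c(\B^\circ)$ is dense in the minimal domain, to a finite sum of boundary terms at the tip $x=0$ pairing $\omega x^{-q_j^\pm}e$ with $\omega x^{-q_k^\pm}e'$. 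Using the explicit form~\eqref{lap} of $\Delta$, such a term vanishes precisely when either $q_j^\pm+q_k^\pm\neq n-1$ (so the exponents fail to conjugate), or the eigenfunctions $e,e'$ are $L^2(\partial\B)$-orthogonal within a common eigenspace of $\Delta_\partial$. Our construction of $\tilde{\cE}_{q_j^\pm}$ enforces exactly these conditions, giving $\cD(\underline{\Delta}_{-s})\subseteq\cD(\tilde{\Delta}^{*})$; a dimension count exploiting the symmetry of rules~(i)--(iii) under $\gamma\leftrightarrow-\gamma$ then rules out strict inclusion.

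Finally, the Mellin-Sobolev spaces in sight are reflexive and $\cD(\tilde{\Delta})\supseteq C^\infty_c(\B^\circ)$ is dense in $\cH^{s,-\gamma}_{p'}(\B)$, so \cite[Propositions~1.3(v) and 2.6(v)]{DHP} transfer sectoriality and bounded imaginary powers from $c-\tilde{\Delta}$ to $(c-\tilde{\Delta})^{*}=c-\underline{\Delta}_{-s}$ with the same angle and the same power angle. The main obstacle is the adjoint identification in the previous paragraph: the boundary form at the tip couples powers of $x$ with eigenfunctions of $\Delta_\partial$, and the claim that rules~(i)--(iii) are self-dual under this pairing requires matching the orthogonal-complement prescription of Definition~\ref{ext} with the pole structure of the inverse conormal symbol~\eqref{inverse} by a careful, if finite-dimensional, computation.
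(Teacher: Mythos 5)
Your proposal follows essentially the same route as the paper: realize $\underline{\Delta}_{-s}$ as the Banach-space adjoint (under the $\cH^{0,0}_2$-pairing of Theorem \ref{dual}) of a companion extension on $\cH^{s,-\gamma}_{p'}(\B)$ whose asymptotics spaces are the orthogonal complements $\underline{\cE}_{q_j^\pm}^\perp$, verify via the symmetry $q_j^+ + q_j^- = n-1$ that this companion again satisfies rules (i)--(iii) of Definition \ref{ext} so that Theorem \ref{elldomain2} applies to it, and then transfer $\cP(\theta)\cap\mathcal{BIP}(\phi)$ back through \cite[Propositions 1.3(v) and 2.6(v)]{DHP}. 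The only point where you diverge is the identification of the adjoint domain: the paper simply invokes Theorem 5.3 of \cite{Sh}, whereas you sketch a direct Green's-formula computation of the boundary pairing at the tip plus a dimension count; that is a viable substitute, but to be complete it must also treat the double pole at $q_0^\pm=0$ (the $\omega\log x$ terms in $\cE_{q_0^\pm}$ when $\dim\B=2$, cf.\ \eqref{ep0pm} and Definition \ref{ext}(ii)) and justify that the adjoint of an intermediate extension is again intermediate with complementary asymptotics --- which is precisely the content of the cited theorem.
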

\begin{proof}
We will show that \eqref{extneg} is the adjoint of a closed extension of the Laplacian on another Mellin-Sobolev space, namely, of the extension $\underline\gD_{s,-\gg,p'}$ with the domain 
\begin{eqnarray}\label{extnegadjoint}
\cD(\underline \gD_{s,-\gg,p'})= \cD(\gD_{s,-\gg, p',\min}) \oplus \bigoplus_{q^\pm_j\in I_{\gg}}\underline\cE_{q_j^\pm}^\perp,
\end{eqnarray}
where $1/p+1/p'=1$. Here $\cD(\gD_{s,-\gg, p',\min})$ denotes the minimal domain of $\gD$ in $\cH^{s,-\gg}_{p'}(\mathbb{B})$. 

In fact, by Theorem 5.3 in \cite{Sh}, the domain of the adjoint of $\underline{\Delta}_{s,-\gg,p'}$ is
$$
\mathcal{D}((\underline{\Delta}_{s,-\gg,p'})^{\ast})=
\cD(\gD_{-s,\gg, p,\min} ) \oplus\bigoplus_{q^\pm_j\in I_{\gg}}(\underline\cE_{q_j^\pm}^\perp)^\perp.$$
This coincides with \eqref{extneg}, since, by the definition of the orthogonal complements, 
$$ (\underline{\mathcal{E}}_{q_{j}^{\pm}}^{\perp})^{\perp}
=\underline{\mathcal{E}}_{q_{j}^{\pm}}.$$ 

In view of the above stated result on the adjoints of sectorial operators with bounded imaginary powers, 
it suffices to show that, given any $\theta\in[0,\pi)$ and any $\phi>0$, the operator $c-\gD_{s,-\gg,p'}$ with domain \eqref{extnegadjoint} is sectorial of angle $\theta$ and has bounded imaginary powers with angle $\phi$, provided $c$ is large enough. 
This in turn will follow directly from Theorem \ref{elldomain2}, provided the asymptotics spaces satisfy the conditions (i), (ii), and (iii) in Definition \ref{ext}. 

In order to see this we note first that $q_j^++q_j^- = n-1$, so that $q_j^\pm \in I_\gg$ if and only if $q_j^\mp \in I_{-\gg}$. As a consequence, we may rewrite the direct sum on the right hand side of 
\eqref{extnegadjoint} as the direct sum over all $q_j^\mp\in I_{-\gg}$. 
It remains to check that the asymptotics spaces have the properties in (i), (ii) and (iii). 
We distinguish the three cases
\begin{itemize}
\item 
We have $q_j^\mp\in I_\gg\cap I_{-\gg}$ if and only if $q_j^\pm\in I_\gg\cap I_{-\gg}$.
Hence the assumption 
$\underline\cE_{q_j^\pm}^\perp = \underline\cE_{q_j^\mp}$ for $\gD_{-s}$ shows that also 
$\underline\cE_{q_j^\mp}^\perp = \underline\cE_{q_j^\pm}$, so that (i) holds for the extension with domain \eqref{extnegadjoint}.

\item If $\gg\ge 0$ and $q_j^\mp\in I_{-\gg}\setminus I_\gg$, 
then $q_j^\pm\in I_\gg\setminus I_{-\gg}$. Since $\underline\gD_{-s}$ 
satisfies condition (ii), $\underline\cE_{q_j^\pm}=\cE_{q_j^\pm} $. Therefore
$\underline\cE_{q_j^\pm}^\perp =\{0\}$, which says that the domain \eqref{extnegadjoint} 
satisfies (iii).

\item Similarly, if $\gg\le 0$ and $q_j^\mp\in I_{-\gg}\setminus I_\gg$, 
then $q_j^\pm\in I_\gg\setminus I_{-\gg}$. As (iii) holds for $\underline\gD_{-s}$, 
we have $\underline\cE_{q_j^\pm}=\{0\} $. Therefore
$\underline\cE_{q_j^\pm}^\perp =\cE_{q_j^\pm}$, and the domain \eqref{extnegadjoint} 
satisfies (ii).
\end{itemize}
\end{proof}

\subsection{Application}
By definition, $\gl_1$ is the largest negative eigenvalue of the Laplacian $\gD_\partial$ induced 
by the metric $h$ on the boundary. We let 
\begin{eqnarray}\label{epsilonbar}
\overline \varepsilon_n =-\frac{n-1}{2}+\sqrt{(\frac{n-1}{2})^{2}-\lambda_{1}} = -q_1^->0.
\end{eqnarray} 
We choose the weight $\gg$ in the interval 
\begin{eqnarray}\label{choicegg}
\frac{n-3}2<\gg<\min\left\{\frac{n-3}2+\overline\gve_n ,\frac{n+1}2\right\}
\end{eqnarray}
and consider the closed extension $\underline{\Delta}_{s}$ of the Laplacian 
in the Mellin-Sobolev space $\mathcal{H}_{p}^{s,\gamma}(\mathbb{B}):= X_{0}$, $1<p<\infty$, $s\in\mathbb{R}$, given by 
\begin{gather}\label{dds}
\mathcal{D}(\underline{\Delta}_{s})=\mathcal{H}_{p}^{s+2,\gamma+2}(\mathbb{B})\oplus\mathbb{C}:=X_1.
\end{gather}
Note that $\frac{n+1}2-\gg-2 =\frac{n-3}2-\gg \in (-\overline\gve_n,0)$ does not coincide with one of the $q_j^\pm$ so that $\cH^{s+2,\gg+2}_p(\B)$ 
is the minimal domain of $\gD$ in $\cH^{s,\gg}_p(\B)$. In addition, the choice of the single asymptotics space $\C$, the constant functions, meets the requirement in Theorem \ref{elldomain2}. 

We can therefore deduce from Theorem \ref{tadj} that for any $\phi>0$, 
the operator $c-\underline\gD_s$ with domain \eqref{dds} and $\gg$ satisfying 
\eqref{choicegg} has $\mathcal{BIP}(\phi)$ provided $c$ is sufficiently large.
But more is true. Let $c\notin \R_{\le0}$. Based on the results in \cite{ScSe} it was shown in 
\cite[Theorem 4.1]{RS} that $c-\underline\Delta_s: \cH^{s+2,\gg+2}_p(\B)\oplus \C$ is invertible for 
$s\ge0$. Since it follows from Corollaries 3.3 and 3.5 in \cite{ScSe} that the kernel and the cokernel are independent of $s$ and $p$, we find that it is also invertible for $s<0$. 

\begin{theorem}\label{t11}
For any $c>0$, $\gt\in [0,\pi)$ and $\phi>0$, the operator $c-\underline\gD_s$ with domain \eqref{dds} and $\gg$ satisfying 
\eqref{choicegg} belongs to $\mathcal P(\gt)\cap \mathcal{BIP}(\phi)$.
\end{theorem}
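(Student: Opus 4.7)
The plan is to bootstrap from the case of large $c$ (already handled by Theorem \ref{tadj}) to an arbitrary $c>0$, using the invertibility result from \cite{RS} and \cite{ScSe} that was just recalled. Specifically, Theorem \ref{tadj} yields some $c_0>0$ with $c_0-\underline{\Delta}_s\in\mathcal{P}(\theta)\cap\mathcal{BIP}(\phi)$, and the invertibility of $c-\underline{\Delta}_s$ for every $c\notin\mathbb{R}_{\le 0}$ implies $\sigma(\underline{\Delta}_s)\subseteq(-\infty,0]$, hence $\sigma(c-\underline{\Delta}_s)\subseteq[c,\infty)$ for any $c>0$.

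For the sectoriality part, I would use the resolvent identity
\[
(c-\underline{\Delta}_s+\lambda)^{-1}=\bigl(c_0-\underline{\Delta}_s+\mu\bigr)^{-1},\qquad \mu:=\lambda-(c_0-c),
\]
valid whenever $\mu\notin\sigma\bigl(-(c_0-\underline{\Delta}_s)\bigr)\subseteq(-\infty,-c_0]$, which happens for every $\lambda\in S_\theta$. Pick $\theta'\in(\theta,\pi)$. For $|\lambda|$ large enough (depending on $c_0-c$ and $\theta'-\theta$), one checks that $\mu\in S_{\theta'}$ and $|\mu|\ge|\lambda|/2$, so the sectoriality bound of $c_0-\underline{\Delta}_s$ transfers, yielding the estimate $\|(c-\underline{\Delta}_s+\lambda)^{-1}\|\le K'/(1+|\lambda|)$. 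On the complementary compact subset of $S_\theta$, the resolvent is bounded by continuity and by the invertibility at $\lambda=0$. Hence $c-\underline{\Delta}_s\in\mathcal{P}(\theta)$ for every $\theta<\pi$.

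For the bounded imaginary powers, the cleanest route is via functional calculus: since $c_0-\underline{\Delta}_s\in\mathcal{BIP}(\phi)$ for every $\phi>0$, by \cite[Proposition 2.6]{DHP} (or \cite[III.4.6]{Am}) the imaginary powers can be represented through a Dunford integral with a contour $\Gamma$ running along the boundary of a sector strictly containing $\sigma(c-\underline{\Delta}_s)\subseteq[c,\infty)$ and lying in $\rho(c-\underline{\Delta}_s)$:
\[
(c-\underline{\Delta}_s)^{it}=\frac{1}{2\pi i}\int_\Gamma z^{it}\,(z-(c-\underline{\Delta}_s))^{-1}\,dz,
\]
interpreted in the usual regularized sense. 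The resolvent estimates just proved and the fact that $z^{it}$ has modulus $e^{-t\arg z}$ on $\Gamma$ yield the uniform bound $\|(c-\underline{\Delta}_s)^{it}\|\le M e^{\phi|t|}$. Equivalently, one can view $(c-\underline{\Delta}_s)^{it}=f_t(c_0-\underline{\Delta}_s)$ with $f_t(z)=(z-(c_0-c))^{it}$, holomorphic and of modulus $\le e^{\phi'|t|}$ on a sector containing $\sigma(c_0-\underline{\Delta}_s)$, and apply the functional calculus of $c_0-\underline{\Delta}_s$.

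The main obstacle is the BIP step: bounded imaginary powers are not preserved under generic perturbations, so one cannot simply invoke a standard perturbation lemma to pass from $c_0-\underline{\Delta}_s$ to $c-\underline{\Delta}_s$. The argument only works because the perturbation is a real constant shift and because the newly established invertibility keeps the spectrum in $[c,\infty)$, strictly bounded away from $0$ and the negative axis; this is what makes the function $(z-(c_0-c))^{it}$ holomorphic and uniformly bounded on a sector enclosing $\sigma(c_0-\underline{\Delta}_s)$, enabling the functional calculus transfer above.
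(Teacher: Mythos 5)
Your sectoriality argument is fine and is essentially what the paper does: invertibility of $c-\underline\gD_s$ for all $c\notin\R_{\le0}$ (from \cite{RS}, \cite{ScSe}) keeps the spectrum in $[c,\infty)$, and the resolvent bounds transfer from the large-$c$ case by a constant shift. The gap is in the $\mathcal{BIP}$ step, and it is a genuine one. Your first route --- writing $(c-\underline\gD_s)^{it}$ as a Dunford integral $\frac{1}{2\pi i}\int_\Gamma z^{it}(z-(c-\underline\gD_s))^{-1}dz$ and bounding it by the resolvent estimates --- cannot work: $|z^{it}|$ is merely bounded on $\Gamma$ while the resolvent decays only like $1/|z|$, so the integral is not absolutely convergent. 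If this argument were valid, every sectorial operator with spectrum in $[c,\infty)$ would have bounded imaginary powers, which is false; the ``regularized sense'' you invoke is precisely where the difficulty sits. Your second route --- $(c-\underline\gD_s)^{it}=f_t(c_0-\underline\gD_s)$ with $f_t(z)=(z-(c_0-c))^{it}$, bounded by $\sup|f_t|$ via ``the functional calculus of $c_0-\underline\gD_s$'' --- requires a \emph{bounded} $H^\infty$-calculus for $c_0-\underline\gD_s$, which is strictly stronger than the $\mathcal{BIP}$ property that Theorem \ref{tadj} actually provides. Neither route closes the loop.

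The paper's repair is to estimate the \emph{difference} of complex powers rather than the power itself: for $-1<\Re z<0$,
\begin{equation*}
(c-\underline\gD_s)^{z}-(c'-\underline\gD_s)^{z}
=\frac{c-c'}{2\pi i}\int_{\Gamma_\theta}(-\lambda)^{z}\,(c-\underline\gD_s+\lambda)^{-1}(c'-\underline\gD_s+\lambda)^{-1}\,d\lambda,
\end{equation*}
where the resolvent identity produces a \emph{product of two resolvents}, hence an integrand of size $O(|\lambda|^{\Re z-2})$ that is absolutely integrable uniformly as $\Re z\to 0^-$, with the angular factor $|(-\lambda)^z|\le|\lambda|^{\Re z}e^{(\pi-\theta)|\Im z|}$ on $\Gamma_\theta$. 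Combined with the known bound $\|(c'-\underline\gD_s)^z\|\le Me^{\phi|\Im z|}$ for large $c'$, this gives $\|(c-\underline\gD_s)^z\|\le Me^{\phi|\Im z|}+M'e^{(\pi-\theta)|\Im z|}$ on the strip $-1<\Re z<0$, and Lemma III.4.7.4 in \cite{Am} (taking $\theta$ close to $\pi$) then yields $c-\underline\gD_s\in\mathcal{BIP}(\phi)$ for every $\phi>0$. You should replace your BIP step by this difference argument; the rest of your proof can stand.
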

\begin{proof} 
As pointed out above, we know the result already for large $c$. Since the resolvent
exists outside $\R_{\le 0}$ and $\gt\in [0,\pi)$ is arbitrary, we have $c-\underline\gD_s\in \mathcal P(\gt)$. According to Lemma III.4.7.4 in \cite{Am} we find for any $\phi>0$ a $c'>0$ and $M\ge0$ such that the norm of $(c'-\underline\Delta_s)^{z}$ in $\cL(\cH^{s,\gamma}_p(\B))$ is $\le M e^{\phi|\Im z|}$ for $-1<\Re z<0$. 
Denote by $\Gamma_{\theta}$ the positively oriented boundary of the sector $S_\theta$. Using the Dunford integral for the complex powers, see III.4.6.5 in \cite{Am}, we see that 
\begin{eqnarray*}
\lefteqn{\|(c-\underline\Delta_s)^{z}\|\le \|(c'-\underline\Delta_s)^{z}\| + \|(c-\underline\Delta_s)^{z}-(c'-\underline\Delta_s)^{z}\|}\\
&\le& M e^{\phi|\Im z|} +\frac{|c-c'|}{2\pi}\Big\|\int_{\Gamma_\theta}(-\lambda)^z (c-\underline\Delta_s+\lambda)^{-1}(c'-\underline\Delta_s+\lambda)^{-1}d\lambda\Big\| \\
&\le& M e^{\phi|\Im z|} + M' e^{(\pi-\theta)|\Im z|},
\end{eqnarray*}
for a suitable constant $M'$.
Since $\theta$ can be taken arbitrarily close to $\pi$, we have $c-\underline\Delta_s\in \mathcal{BIP}(\phi)$ by Lemma III.4.7.4 in \cite{Am}.
\end{proof}

\section{Embeddings for the Domain of the Complex Powers of the Laplacian}\setcounter{equation}{0}

At this point, we are able show how the interpolation between a Mellin-Sobolev space on one hand and a direct sum of a Mellin-Sobolev space and the constant functions on the other can be controlled. 
As a consequence, standard theory for sectorial operators furnishes some information about the domain of the complex powers of the Laplacian. We first prove the following.

\begin{lemma}\label{lk2}
Let $s,\gamma\in\mathbb{R}$ and $p\in(1,\infty)$. Suppose $u\in \cH^{s,\gg}_p(\B)\cap H^{s+2}_{p,loc}(\B^\circ)$ and 
that in local coordinates near the boundary, $x\partial_xu$ and $\partial_{y_j}u$ belong to 
$\cH_p^{s+1,\gg+2}(\B)$. 
Then $u\in \mathcal{D}(\Delta_{s,\min})+\mathbb{C}$. 
\end{lemma}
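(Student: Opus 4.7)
The plan is to first place $u$ in $\cD(\gD_{s,\max})$, decompose it via Proposition~\ref{maxmin} as $u=u_{\min}+A$, and then use the hypothesis on $x\partial_x u$ to force every asymptotic component of $A$ except the constant one to vanish.

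Using \eqref{lap} near $\partial\B$, $x^2\gD u=(x\partial_x)^2 u+(n-1)(x\partial_x)u+\gD_\partial u$. Since $x\partial_x$ is bounded $\cH_p^{s+1,\gg+2}(\B)\to\cH_p^{s,\gg+2}(\B)$, both $(x\partial_x)^2 u=x\partial_x(x\partial_x u)$ and $(n-1)x\partial_x u$ lie in $\cH_p^{s,\gg+2}(\B)$; and since $\gD_\partial$ is, in local coordinates, a second-order operator in $y$ with smooth coefficients, the hypothesis on $\partial_{y_j}u$ yields $\gD_\partial u\in\cH_p^{s,\gg+2}(\B)$. Combined with $u\in H_{p,loc}^{s+2}(\B^\circ)$, we get $\gD u\in\cH_p^{s,\gg}(\B)$, so $u\in\cD(\gD_{s,\max})$. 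Proposition~\ref{maxmin} then provides
\[ u=u_{\min}+\sum_{q_j^\pm\in I_\gg}\omega\,x^{-q_j^\pm}\otimes f_{j,\pm},\qquad f_{j,\pm}\in E_j, \]
augmented by a term $\omega\log x\otimes g$, $g\in E_0$, in the two-dimensional case when $0\in I_\gg$. Writing $A$ for the asymptotic sum and noting that $x\partial_x\omega$ is smooth and compactly supported in $\B^\circ$, one computes, modulo smooth compactly supported remainders,
\[ x\partial_x A\equiv -\sum_{q_j^\pm\in I_\gg}q_j^\pm\,\omega\,x^{-q_j^\pm}\otimes f_{j,\pm}+\omega\otimes g. \]

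In the generic case $(n+1)/2-\gg-2\notin\{q_j^\pm\}$, Proposition~\ref{maxmin} further identifies $\cD(\gD_{s,\min})=\cH_p^{s+2,\gg+2}(\B)$, hence $x\partial_x u_{\min}\in\cH_p^{s+1,\gg+2}(\B)$, so the hypothesis on $x\partial_x u$ forces $x\partial_x A\in\cH_p^{s+1,\gg+2}(\B)$. But $\omega\,x^{-q_j^\pm}\in\cH_p^{s+1,\gg+2}(\B)$ requires $q_j^\pm<(n-3)/2-\gg$, contradicting $q_j^\pm\in I_\gg$; and $\omega\in\cH_p^{s+1,\gg+2}(\B)$ requires $\gg<(n-3)/2$, which fails whenever $0\in I_\gg$. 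By the uniqueness of the direct-sum decomposition in Proposition~\ref{maxmin} (equivalently, by reading off the leading-order behaviors at $x=0$), every $f_{j,\pm}$ with $q_j^\pm\ne 0$, and also $g$, must vanish. The non-generic subcase $(n+1)/2-\gg-2\in\{q_j^\pm\}$ reduces to this by running the argument with $\gg$ replaced by $\gg-\gve$ for arbitrarily small $\gve>0$: the hypothesis persists via $\cH_p^{s+1,\gg+2}\hookrightarrow\cH_p^{s+1,\gg-\gve+2}$, and the minimal-domain condition at the original weight follows from those at nearby generic weights together with $u\in\cH_p^{s,\gg}(\B)$.

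Thus only the $q=0$ part of $A$ survives, giving $A=\omega\otimes f_0$ with $f_0\in E_0$. Under the standing assumption that $\partial\B$ is connected (so $E_0=\C\cdot\mathbf{1}$), $A=c\omega$ for some $c\in\C$. Writing $c\omega=c-c(1-\omega)$ and observing that $c(1-\omega)$ is smooth with compact support in $\B^\circ$ and hence belongs to $\cD(\gD_{s,\min})$, we conclude $u=\bigl(u_{\min}-c(1-\omega)\bigr)+c\in\cD(\gD_{s,\min})+\C$. The main obstacle is the third paragraph: converting the weighted-space inclusion $x\partial_x A\in\cH_p^{s+1,\gg+2}(\B)$ into the vanishing of individual asymptotic coefficients requires the uniqueness of the Mellin asymptotic expansion built into Proposition~\ref{maxmin}.
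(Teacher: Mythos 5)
Your proof is correct and follows the same route as the paper's (much terser) argument: use \eqref{lap} and the hypotheses to place $u$ in $\cD(\Delta_{s,\max})$, decompose via Proposition \ref{maxmin}, and use the condition on $x\partial_x u$ together with the weight characterization of $\omega x^{-q}\in\cH^{s+1,\gamma+2}_p(\B)$ to eliminate every asymptotic term except the constant one. Your additional care with the log-term in dimension two, the non-generic weights, and the reduction of $\omega\otimes E_0$ to the constants merely makes explicit what the paper leaves implicit.
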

\begin{proof} 
From \eqref{lap} and the assumption we conclude that $\Delta u\in\cH_p^{s,\gg}(\B)$. Therefore, $u$ belongs to the maximal domain of $\Delta$ in $\cH_p^{s,\gg}(\B)$, and
Proposition \ref{maxmin} shows that 
 $$ u\in \mathcal{D}(\Delta_{s,\min})\oplus
 \bigoplus_{q_j^\pm\in I_{\gamma}}\cE_{q_j^\pm}.$$
From the above direct sum and the assumption on $\partial_xu$ we see that only the asymptotic component that corresponds to $q_j^\pm=0$ can occur, which completes the proof. 
\end{proof}

We can use the above lemma to control certain interpolation spaces as follows.

\begin{lemma}\label{inter} Let $s\in\mathbb{R}$, $p,q\in(1,\infty)$ and $\theta\in(0,1)$. Then, the following embeddings hold
$$
\cH^{s+2\theta+\varepsilon,\gg+2\theta+\varepsilon}_p(\B)+\mathbb{C}\hookrightarrow(\cH^{s,\gg}_p(\B),\cH^{s+2,\gg+2}_p(\B)+\mathbb{C})_{\theta,q}
\hookrightarrow \cH^{s+2\theta-\varepsilon,\gg+2\theta-\varepsilon}_p(\B)+\mathbb{C},
$$
for every $\gve>0$. 
\end{lemma}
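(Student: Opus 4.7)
The plan is to prove the two embeddings separately. The lower one follows quickly from Lemma~\ref{int2}: that lemma yields $\cH_p^{s+2\theta+\gve,\gg+2\theta+\gve}(\B)\hookrightarrow(\cH_p^{s,\gg}(\B),\cH_p^{s+2,\gg+2}(\B))_{\theta,q}$, and functoriality of real interpolation applied to the inclusion $\cH_p^{s+2,\gg+2}(\B)\hookrightarrow \cH_p^{s+2,\gg+2}(\B)+\C$ sends this into $(\cH_p^{s,\gg}(\B),\cH_p^{s+2,\gg+2}(\B)+\C)_{\theta,q}$. Since also $\C\hookrightarrow \cH_p^{s+2,\gg+2}(\B)+\C\hookrightarrow (\cH_p^{s,\gg}(\B),\cH_p^{s+2,\gg+2}(\B)+\C)_{\theta,q}$ for $\theta\in(0,1)$, summing the two contributions proves the lower embedding.

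For the upper embedding set $X_0=\cH_p^{s,\gg}(\B)$ and $X_1=\cH_p^{s+2,\gg+2}(\B)+\C$. The main observation is that in local coordinates near $\partial\B$ the operators $x\partial_x$ and $\partial_{y_j}$ annihilate constants, and, by a direct computation with the Mellin transform $\cS_\gg$ (yielding $\cS_\gg(x\partial_xu)=(\gg-(n+1)/2)\cS_\gg u-\partial_t\cS_\gg u$ and $\cS_\gg(\partial_{y_j}u)=\partial_{y_j}\cS_\gg u$), they map $\cH_p^{\sigma,\delta}(\B)\to\cH_p^{\sigma-1,\delta}(\B)$ boundedly \emph{without} weight loss. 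Consequently they are bounded $X_0\to\cH_p^{s-1,\gg}(\B)$ and $X_1\to\cH_p^{s+1,\gg+2}(\B)$, and interpolation combined with Lemma~\ref{int} (absorbing the possible regularity loss for $q>2$ into $\eta$) yields, for every $\eta>0$,
\[
x\partial_x,\,\partial_{y_j}\,:\,(X_0,X_1)_{\theta,q}\longrightarrow \cH_p^{s+2\theta-1-\eta,\gg+2\theta-\eta}(\B)\quad \text{boundedly.}
\]

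Given $\gve>0$, choose $\eta\in(0,\gve)$ so that $\tfrac{n-3}{2}-\gg-2\theta+\eta\notin\{q_j^\pm\}$ (possible since the non-bijectivity set of $\sigma_M(\Delta)$ is countable), and set $s'=s+2\theta-2-\eta$, $\gg'=\gg+2\theta-2-\eta$. For any $u\in(X_0,X_1)_{\theta,q}$ the preceding bound gives $x\partial_xu,\partial_{y_j}u\in\cH_p^{s'+1,\gg'+2}(\B)$; moreover $u\in X_0\hookrightarrow\cH_p^{s',\gg'}(\B)$ since $\theta<1$, and an interior elliptic-regularity argument (on compact subsets of $\B^\circ$, where Mellin-Sobolev reduces to the standard $H^\sigma_p$-scale and $\{x\partial_x,\partial_{y_j}\}$ span all first-order derivatives) gives $u\in H^{s'+2}_{p,\mathrm{loc}}(\B^\circ)$. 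Lemma~\ref{lk2} then yields $u\in\cD(\Delta_{s',\min})+\C$, and the choice of $\eta$ combined with Proposition~\ref{maxmin} identifies $\cD(\Delta_{s',\min})=\cH_p^{s'+2,\gg'+2}(\B)=\cH_p^{s+2\theta-\eta,\gg+2\theta-\eta}(\B)$, which embeds into $\cH_p^{s+2\theta-\gve,\gg+2\theta-\gve}(\B)$ as $\eta\le\gve$. This proves the upper embedding.

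The main difficulty is justifying the weight-preserving action of $x\partial_x$ and $\partial_{y_j}$: as cone differential operators of order one they \emph{a priori} lose one unit in both regularity and weight, but the vanishing of their relevant coefficients at $x=0$ restores the full weight and so makes the interpolation argument deliver the correct target space. A secondary but easily handled issue is perturbing $\eta$ to avoid the discrete poles of $\sigma_M(\Delta)^{-1}$, so that the minimal domain of $\Delta$ contains no extra Mellin asymptotics and Lemma~\ref{lk2} delivers $u$ cleanly as a Mellin-Sobolev element plus a constant.
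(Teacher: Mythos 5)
Your proposal is correct and follows essentially the same route as the paper: the lower embedding via Lemma \ref{int2} plus monotonicity of real interpolation in the second argument, and the upper one by interpolating the weight-preserving maps $x\partial_x,\partial_{y_j}$ (which annihilate constants) and then invoking Lemma \ref{lk2}. The only differences are that you identify $\cD(\Delta_{s',\min})$ exactly via Proposition \ref{maxmin} after perturbing $\eta$ off the countable set of non-bijectivity points, where the paper simply cites \eqref{dmin}, and you explicitly verify the interior regularity hypothesis of Lemma \ref{lk2}, which the paper leaves implicit.
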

\begin{proof}
Concerning the first embedding we know from Lemma \ref{int2} that 
\begin{gather*}
\cH^{s+2\theta+\varepsilon,\gg+2\theta+\varepsilon}_p(\B)\hookrightarrow(\cH^{s,\gg}_p(\B),\cH^{s+2,\gg+2}_p(\B))_{\theta,q}.
\end{gather*}
By standard properties of interpolation spaces (see e.g. Proposition 4.2.1 in \cite{Tay1}), we also have 
$$
(\cH^{s,\gg}_p(\B),\cH^{s+2,\gg+2}_p(\B))_{\theta,q}\hookrightarrow(\cH^{s,\gg}_p(\B),\cH^{s+2,\gg+2}_p(\B)+\mathbb{C})_{\theta,q}.
$$
Therefore, 
$$
\cH^{s+2\theta+\varepsilon,\gg+2\theta+\varepsilon}_p(\B)\hookrightarrow(\cH^{s,\gg}_p(\B),\cH^{s+2,\gg+2}_p(\B)+\mathbb{C})_{\theta,q}.
$$
Moreover, 
$$
\mathbb{C}\hookrightarrow(\cH^{s,\gg}_p(\B),\cH^{s+2,\gg+2}_p(\B)+\mathbb{C})_{\theta,q}.
$$
Hence, we conclude that 
$$
\cH^{s+2\theta+\varepsilon,\gg+2\theta+\varepsilon}_p(\B)+\mathbb{C}\hookrightarrow(\cH^{s,\gg}_p(\B),\cH^{s+2,\gg+2}_p(\B)+\mathbb{C})_{\theta,q}.
$$

Let us show now the second embedding. For any 
$$
u\in(\cH^{s,\gg}_p(\B),\cH^{s+2,\gg+2}_p(\B)+\mathbb{C})_{\theta,q},
$$ 
$x\partial_xu$ and $\partial_{y_j}u$ belong to 
$$
(\cH^{s-1,\gg}_p(\B),\cH^{s+1,\gg+2}_p(\B))_{\theta,q}\hookrightarrow \cH^{s+2\theta-1-\varepsilon,\gg+2\theta-\varepsilon}_p(\B),
$$
for any $\varepsilon>0$ by Lemma \ref{int}. Then Lemma \ref{lk2} and \eqref{dmin} show the assertion.
\end{proof}

In this way, we gain some information about the domains of the complex powers of $\underline{\Delta}_{s}$ given in \eqref{dds}. From \cite[I.(2.9.6)]{Am}, \cite[I.(2.5.2)]{Am} and the previous lemma, we obtain the following.

\begin{corollary}\label{complex}
Let $s\in\mathbb{R}$, $\gamma$ as in \eqref{choicegg}, $p\in(1,\infty)$, $0<\Re z<1$ and $c>0$. For the closed extension $\underline{\Delta}_{s}$ given in \eqref{dds}, we then have 
$$
 \cH^{s+2\Re z+\varepsilon,\gg+2\Re z+\varepsilon}_p(\B)+\mathbb{C}\hookrightarrow\mathcal{D}((c-\underline{\Delta}_{s})^{z})\hookrightarrow 
 \cH^{s+2\Re z-\varepsilon,\gg+2\Re z-\varepsilon}_p(\B)+\mathbb{C} \quad \text{for each }\ \gve>0.
$$
\end{corollary}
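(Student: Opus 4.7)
The plan is to reduce the claim to Lemma \ref{inter} via the standard characterization of the domain of a fractional power of a sectorial operator with bounded imaginary powers.

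Set $A := c - \underline{\Delta}_s$, $X_0 = \cH^{s,\gg}_p(\B)$, and $X_1 = \cH^{s+2,\gg+2}_p(\B)\oplus\mathbb{C} = \mathcal{D}(\underline{\Delta}_s)$. By Theorem \ref{t11}, $A$ is sectorial of any angle $<\pi$ and in $\mathcal{BIP}(\phi)$ for every $\phi > 0$. I would invoke the two Amann references supplied in the hint to obtain, for each $0 < \theta < 1$,
$$(X_0, X_1)_{\theta, 1} \hookrightarrow \mathcal{D}(A^\theta) \hookrightarrow (X_0, X_1)_{\theta, \infty},$$
together with $\mathcal{D}(A^z) = \mathcal{D}(A^{\Re z})$, which follows from BIP because $A^{i\Im z}$ is a bounded automorphism of $X_0$.

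It then suffices to feed Lemma \ref{inter} into this sandwich. Applying the lemma at the slightly shifted parameter $\theta = \Re z + \delta$ with $0 < \delta < \varepsilon/2$ and some fixed $q \in (1,\infty)$ produces the first arrow of the chain
$$\cH^{s+2\Re z+\varepsilon,\gg+2\Re z+\varepsilon}_p(\B)\oplus\mathbb{C} \hookrightarrow (X_0,X_1)_{\Re z+\delta, q} \hookrightarrow (X_0,X_1)_{\Re z, 1} \hookrightarrow \mathcal{D}(A^z),$$
where the middle step is the standard monotonicity of real interpolation in $\theta$ (a larger $\theta$ with any $q$ embeds into a smaller $\theta$ with any other $q$). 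Symmetrically, applying Lemma \ref{inter} at $\theta = \Re z - \delta$ and using the same monotonicity gives the matching chain on the right; the fractional losses $\delta$ are harmless as they are absorbed into the arbitrary $\varepsilon$.

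The substantive content has already been pinned down in Lemmas \ref{lk2} and \ref{inter}, where the key geometric fact --- that the constant is the only asymptotic surviving the real interpolation between $X_0$ and $X_1$ --- was settled. What remains here is purely abstract interpolation/BIP machinery, so no real obstacle arises at this final step.
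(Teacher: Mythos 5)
Your proposal is correct and follows essentially the same route as the paper: the paper's proof consists precisely of citing Amann's sandwich $(X_0,X_1)_{\theta,1}\hookrightarrow\mathcal{D}(A^{\theta})\hookrightarrow(X_0,X_1)_{\theta,\infty}$ together with the monotonicity of the real interpolation spaces in $\theta$ and Lemma \ref{inter}, absorbing the parameter shifts into the arbitrary $\varepsilon$. The reduction $\mathcal{D}(A^{z})=\mathcal{D}(A^{\Re z})$ via the bounded imaginary powers from Theorem \ref{t11} is exactly the intended use of the hypotheses, so nothing is missing.
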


\section{The Porous Medium Equation on Manifolds with Cones}\setcounter{equation}{0}

We will treat the porous medium equation as a quasilinear problem. Using the multiplier properties of the Mellin-Sobolev spaces of sufficiently large order and weight, we will apply the maximal regularity theorem by Cl\'ement and Li. 
We denote by $(g^{ij})=(g_{ij})^{-1}$ and $(h^{ij})=(h_{ij})^{-1}$ the inverses of the metric tensors of $g$ and $h$ in local coordinates. Then the following identity holds
\begin{gather*}
\Delta(u^{m})=mu^{m-1}\Delta u +m(m-1)u^{m-2}\langle \nabla u,\nabla u\rangle_{g},
\end{gather*}
where
\begin{gather*}
\nabla u=\sum_{i,j}g^{ij}\frac{\partial u}{\partial x^{i}}\frac{\partial}{\partial x^{j}},
\intertext{and, in local coordinates $(x,y)$ near the boundary, } 
\langle \nabla u,\nabla v\rangle_{g}=\frac{1}{x^{2}}\Big((x\partial_{x}u)(x\partial_{x}v)+\sum_{i,j}h^{ij}\frac{\partial u}{\partial y^{i}} \frac{\partial v}{\partial y^{j}} \Big).
\end{gather*}
Equations (\ref{e1}), (\ref{e2}) then take the quasilinear form 
\begin{eqnarray}\label{e3}
u'(t)-m u^{m-1}(t)\Delta u(t)&=&f(u,t)+m(m-1)u^{m-2}(t)\langle \nabla u(t),\nabla u(t)\rangle_{g} \\\label{e4}
u(0)&=&u_{0}.
\end{eqnarray}

In the following, we will show maximal $L^{p}$-regularity for the linearized term of the problem for appropriate initial data. 
The proof is inspired by Theorem 5.7 in \cite{DHP}. We first construct a parametrix with the help of a suitable partition of the space and a Neumann series argument. Then we apply $R$-sectoriality perturbation arguments. 
 
\begin{theorem}\label{t21}
Let $\underline{\Delta}_s$ be chosen as in \eqref{dds} and the weight $\gamma$ as in \eqref{choicegg}. 
We assume that $s\ge0$ and that $p$ and $q$ are so large that $ \frac{n+1}p+\frac{2}{q}<1$ and $\gamma>\frac{n-3}{2}+\frac{2}{q}$. 
Then for any $u\in X_{\frac{1}{q},q}:=(X_1,X_0)_{\frac{1}{q},q}$, such that $u\geq \alpha>0$ on $\mathbb{B}$ and sufficiently large $c>0$, $c-u\underline{\Delta}_s: X_1 \rightarrow X_0$ is a well defined closed linear operator that is $R$-sectorial of angle $\theta$ for any $\theta$ in $[0,\pi)$.

The statement extends to the case where $-1+\frac{n+1}{p}+\frac{2}{q}<s<0$ with $p,q$ as before.
\end{theorem}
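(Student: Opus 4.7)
My plan, following the scheme of the proof of Theorem~5.7 in \cite{DHP}, is to freeze the coefficient $u$ on a fine partition of $\mathbb{B}$, build a parametrix from the constant-coefficient resolvents supplied by Theorem~\ref{t11}, and invert the resulting perturbation of the identity by a Neumann series argument. A prerequisite is to check that $u\underline{\Delta}_s\colon X_1\to X_0$ is bounded. Combining Lemma~\ref{inter} with the hypotheses $(n+1)/p+2/q<1$ and $\gamma>(n-3)/2+2/q$ yields
\[
X_{\frac{1}{q},q} \hookrightarrow \mathcal{H}_p^{s+2-\frac{2}{q}-\gve,\,\gamma+2-\frac{2}{q}-\gve}(\mathbb{B})\oplus\mathbb{C},
\]
with exponents exceeding the thresholds of Lemma~\ref{c0}. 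Hence $u$ is continuous on $\mathbb{B}^\circ$ and, by Corollary~\ref{multiplier}, a bounded multiplier on $\mathcal{H}_p^{s,\gamma}(\mathbb{B})$. Since $\underline{\Delta}_s$ annihilates the constant component of $X_1$, the product $u\underline{\Delta}_s\colon X_1\to X_0$ is well defined and bounded, and the pointwise bound $u\ge\alpha>0$ makes $c-u\underline{\Delta}_s$ closed on $X_1$.

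\medskip

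Choose a finite cover $\{V_k\}$ of $\mathbb{B}$ adapted to the degenerate metric (with diameters scaled by $x$ near the tip), a subordinate partition of unity $\{\varphi_k\}$, and cut-offs $\psi_k\in C^\infty_c(V_k)$ with $\psi_k\varphi_k=\varphi_k$. Pick points $p_k\in V_k$ and set $u_k=u(p_k)\in[\alpha,\|u\|_\infty]$. By Theorem~\ref{t11} and Remark~\ref{rem1}, each constant-coefficient operator $c-u_k\underline{\Delta}_s = u_k(c/u_k-\underline{\Delta}_s)$ is $R$-sectorial of arbitrary angle less than $\pi$, with $R$-bounds uniform in $k$ thanks to $\alpha\le u_k\le\|u\|_\infty$. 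Define
\[
P(\lambda) := \sum_k \varphi_k\,(c+\lambda-u_k\underline{\Delta}_s)^{-1}\psi_k,\qquad \lambda\in S_\theta.
\]
Using $\sum_k\varphi_k\equiv 1$, $\varphi_k\psi_k=\varphi_k$, and the identity $c+\lambda-u\underline{\Delta}_s = (c+\lambda-u_k\underline{\Delta}_s) - (u-u_k)\underline{\Delta}_s$, a direct computation yields
\[
(c+\lambda-u\underline{\Delta}_s)\,P(\lambda) = I - R_1(\lambda) - R_2(\lambda),
\]
where
\[
R_1(\lambda) = \sum_k \varphi_k(u-u_k)\underline{\Delta}_s(c+\lambda-u_k\underline{\Delta}_s)^{-1}\psi_k, \qquad R_2(\lambda) = \sum_k [u\underline{\Delta}_s,\varphi_k]\,(c+\lambda-u_k\underline{\Delta}_s)^{-1}\psi_k.
\]

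\medskip

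The $R$-bound of $R_1(\lambda)$ is controlled by $\max_k\|u-u_k\|_\infty$ times the uniform $R$-bound of $\underline{\Delta}_s(c+\lambda-u_k\underline{\Delta}_s)^{-1}$; refining the cover makes this arbitrarily small. Each term in $R_2(\lambda)$ contains a commutator $[u\underline{\Delta}_s,\varphi_k] = u[\underline{\Delta}_s,\varphi_k]$, which is a cone differential operator of order $1$, hence bounded on $\mathcal{D}((c/u_k-\underline{\Delta}_s)^\phi)$ to $X_0$ for any $\phi>1/2$, by Corollary~\ref{complex}. The standard sectorial estimate $\|A^\phi(A+\lambda)^{-1}\|\lesssim (c+|\lambda|)^{\phi-1}$ for $A=c-u_k\underline{\Delta}_s$ then gives
\[
\|[u\underline{\Delta}_s,\varphi_k](c+\lambda-u_k\underline{\Delta}_s)^{-1}\| \lesssim (c+|\lambda|)^{-1/2+\gve}
\]
uniformly in $\lambda\in S_\theta$, and Kahane's contraction principle together with Lemma~\ref{easy} transfers this to an $R$-bound. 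Taking $c$ sufficiently large and the partition sufficiently fine, the combined $R$-bound of $R_1+R_2$ falls below $1/2$, so a Neumann series inverts $I-R_1-R_2$ and yields
\[
(c+\lambda-u\underline{\Delta}_s)^{-1} = P(\lambda)\bigl(I-R_1(\lambda)-R_2(\lambda)\bigr)^{-1}.
\]
Since $\lambda P(\lambda)$ is manifestly $R$-bounded and Neumann inversion preserves $R$-boundedness, $c-u\underline{\Delta}_s$ is $R$-sectorial of any angle $\theta<\pi$. The most delicate step, which I expect to be the main obstacle, is the construction of the cover near the conical tip: a naive Euclidean partition breaks the scale-invariant structure $\underline{\Delta}_s = x^{-2}(\cdots)$, so one must use dyadic dilations in $x$ and carefully check that the multiplier and commutator estimates remain uniform across the dilation scales, while the constant asymptotics component is preserved (this last is largely automatic, since constants are annihilated by $\underline{\Delta}_s$). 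The extension to $-1+(n+1)/p+2/q<s<0$ proceeds verbatim, replacing Theorem~\ref{t11} by Theorem~\ref{tadj} to supply bounded imaginary powers on negative-order Mellin-Sobolev spaces.
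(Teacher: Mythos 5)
Your overall architecture (freeze the coefficient, localize, invert by a Neumann series) is the same as the paper's, but there are three concrete gaps. First and most seriously: the smallness of your $R_1(\lambda)$ rests on the claim that the operator norm of multiplication by $\varphi_k(u-u_k)$ on $X_0=\cH^{s,\gg}_p(\B)$ is controlled by $\max_k\|u-u_k\|_\infty$. This is true only for $s=0$; for $s\neq0$ the multiplier norm on $\cH^{s,\gg}_p(\B)$ is not the sup norm, and the sup norm of $u-u_k$ being small does not make the multiplier norm small. This is exactly why the paper proves the case $s=0$ first, then extends to $s>0$ by showing the resolvent restricts, running an induction over integer $s$ with commutators $[\partial_y^a M_{\phi_j},\,\cdot\,]$, and interpolating (Lemma \ref{sharpint}, \cite[Theorem 3.19]{KS}); and why for $-1+\frac{n+1}{p}+\frac2q<s<0$ it needs a separate argument (Corollary \ref{multiplier} plus an interpolation inequality between the $\cH^{0,(n+1)/2}_p$ and $\cH^{1,(n+1)/2}_p$ norms, using the Lipschitz continuity of $u$ and the uniform bound on $\go'(d/2r)\,d/2r$) to make the relevant multiplier norm small. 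Your closing sentence that the negative-order case ``proceeds verbatim'' skips precisely the step where the work lies.

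Second, your passage from the pointwise decay $\|R_2(\lambda)\|\lesssim(c+|\lambda|)^{-1/2+\gve}$ to an $R$-bound via ``Kahane's contraction principle'' is not valid: Kahane's principle contracts scalar coefficients, not operator families, and norm decay of a family does not by itself give $R$-boundedness. The paper sidesteps this by factoring each error term so that all $\lambda$-dependence sits in the families $(c-u_i\underline\gD_s)(\lambda+c-u_i\underline\gD_s)^{-1}$, which are $R$-bounded by definition of $R$-sectoriality (cf. \eqref{RS2}), while the smallness comes from the $\lambda$-independent factors $[u\underline\gD_s,\phi_j]\psi_i(c-u_i\underline\gD_s)^{-1}$ with $c$ large. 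Third, the ``main obstacle'' you flag near the tip -- the need for dyadic dilations in $x$ -- is left unresolved, but it is in fact a non-issue once one observes that $X_{1/q,q}\hookrightarrow\cH^{s+2-2/q-\gve,\gg+2-2/q-\gve}_p(\B)\oplus\C$ with weight exceeding $\frac{n+1}2$, so by Lemma \ref{c0} the non-constant part of $u$ is $o(1)$ as $x\to0$ and $u$ is continuous on $\B$ and \emph{constant on} $\partial\B$; a single collar patch $[0,r)\times\partial\B$ with the boundary value as frozen coefficient then works, and no dilation-invariant cover is needed. Relatedly, the paper localizes with coefficients $u_j$ that \emph{agree with $u$ on} $\mathrm{supp}\,\phi_j$ (so your $R_1$ term is absent altogether and the smallness is absorbed into the $R$-sectoriality of each $c-u_j\underline\gD_s$ via the Kunstmann--Weis perturbation theorem); your choice of genuinely constant $u_k$ on each patch forces the problematic $R_1$ estimate.
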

\begin{proof}
Recall that $X_{0}=\mathcal{H}_{p}^{s,\gamma}(\mathbb{B})$ and $X_{1}=\mathcal{H}_{p}^{s+2,\gamma+2}(\mathbb{B})\oplus\mathbb{C}$. By Lemma \ref{inter},
\begin{gather}\label{embend}
X_{\frac{1}{q},q} \hookrightarrow \mathcal{H}_{p}^{s+2-\frac{2}{q}-\varepsilon,\gamma+2-\frac{2}{q}-\varepsilon}(\mathbb{B})\oplus\mathbb{C}
\end{gather}
for any $\varepsilon>0$. 
In the sequel, we will only use this property of $u$. 

Since $s+2-\frac{2}{q}>|s|+\frac{n+1}{p}$ and $\gg+2-\frac2q>\frac{n+1}2$, Lemma \ref{c0} implies that $u$ is continuous and constant on $\partial \B$;
according to Corollary \ref{multiplier}, multiplication by $u$ induces a bounded map in $\mathcal{H}_{p}^{s,\gamma}(\mathbb{B})$. Thus, $u\underline{\Delta}_s$ is a well defined closed linear operator in $X_0$ with domain $X_{1}$.

For ${\bar z}\in\mathbb{B}$, let $u({\bar z})\underline{\Delta}_s$ be the realization of $u({\bar z})\Delta$ in $X_{0}$ with domain $X_{1}$. 
For every $c'>0$, $c'-\underline{\Delta}_s$ is $R$-sectorial of angle $\theta$, for each $\theta\in[0,\pi)$ 
by Theorem \ref{t11} and \cite[Theorem 5]{CP}. Fix $c,c'>0$ with $c'\leq c/\|u\|_\infty$.
Denote by $\epsilon_{\rho}$ the $\rho$-th Rademacher function.
The $R$-boundedness of $c'-\underline{\Delta}_{s}$, cf. \eqref{RS1}, \eqref{RS2}, implies that for any $\lambda_{1},...,\lambda_{K}\in\{z\in\mathbb{C}\, |\, |\arg z|\leq \theta\}$, $K\in\mathbb{N}$, and $x_{1},...,x_{K}\in X_{0}$, 
\begin{eqnarray}\nonumber
\lefteqn{\Big\|\sum_{\rho=1}^{K}\lambda_{\rho}(c-u({\bar z})\underline{\Delta}_{s}+\lambda_{\rho})^{-1}\epsilon_{\rho}x_{\rho}\Big\|_{L^{2}(0,1;X_{0})}}\\\nonumber
&=&\Big\|\sum_{\rho=1}^{K}\Big(\frac{c}{u({\bar z})}-c'+\frac{\lambda_{\rho}}{u({\bar z})}-\frac{c}{u({\bar z})}+c'\Big)\Big(c'-\underline{\Delta}_{s}+\frac{c}{u({\bar z})}-c'+\frac{\lambda_{\rho}}{u({\bar z})}\Big)^{-1}\epsilon_{\rho}x_{\rho}\Big\|_{L^{2}(0,1;X_{0})}\\\nonumber
&\leq&\Big\|\sum_{\rho=1}^{K}\Big(\frac{c}{u({\bar z})}-c'+\frac{\lambda_{\rho}}{u({\bar z})}\Big)\Big(c'-\underline{\Delta}_{s}+\frac{c}{u({\bar z})}-c'+\frac{\lambda_{\rho}}{u({\bar z})}\Big)^{-1}\epsilon_{\rho}x_{\rho}\Big\|_{L^{2}(0,1;X_{0})}\\\label{RB}
&&+\Big\|\Big(\frac{c}{u({\bar z})}-c'\Big)(c'-\underline{\Delta}_{s})^{-1}\sum_{\rho=1}^{K}(c'-\underline{\Delta})\Big(c'-\underline{\Delta}_{s}+\frac{c}{u({\bar z})}-c'+\frac{\lambda_{\rho}}{u({\bar z})}\Big)^{-1}\epsilon_{\rho}x_{\rho}\Big\|_{L^{2}(0,1;X_{0})}\\\nonumber
&\leq&\Big(C'(c')+\big(\frac{c}{u({\bar z})}-c'\big)\|(c'-\underline{\Delta}_{s})^{-1}\|_{\mathcal{L}(X_{0})}C''(c')\Big)\Big\|\sum_{\rho=1}^{K}\epsilon_{\rho}x_{\rho}\Big\|_{L^{2}(0,1;X_{0})},\nonumber
\end{eqnarray}
for constants $C'(c'),C''(c')>0$ that depend only on $c'$. Hence, for any $c>0$, $c-u({\bar z})\underline{\Delta}_{s}$ is $R$-sectorial of angle $\theta$ and the $R$-bound is uniform with respect to ${\bar z}$. 
By a well-known perturbation result, see Theorem 1 in \cite{KL}, 
the sum $c-u({\bar z})\underline{\Delta}_{s}-v(z) \underline{\Delta}_{s}$
is again $R$-sectorial of angle $\theta$ with a slightly larger $R$-bound uniformly in ${\bar z}$, provided the norm of $v$ as a multiplier in $X_{0}$ is sufficiently small, say $\|v\cdot\|_{\mathcal{L}(X_{0})}\le\eta$. Here we use the fact that multiplication by $v$ induces a bounded map in $X_0$, by \eqref{embend} and Lemma \ref{c0}.

In order to establish the $R$-sectoriality of $c-u\underline{\Delta}_s$ assume first that $s=0$. 
For $r>0$ choose an open cover of $\mathbb{B}$, consisting of balls $B_j=B_{r}(z_{j})$, $z_{j}\in\mathbb{B}^\circ$, $j\in\{1,...,N\}$, not intersecting the boundary, together with a collar neighborhood $B_0= [0,r)\times\partial \B$. We may assume that also the balls $\overline{B_{3r/2}(z_j)}$ do not intersect $\partial \B$. Let $\omega :\mathbb{R}\rightarrow [0,1]$ be a smooth non-increasing function that equals $1$ on $[0,1/2]$ and $0$ on $[3/4,\infty)$.
Denote by $d=d(z,z')$ the geodesic distance between two points $z,z'\in\mathbb{B}$ with respect to the metric $g$. We define 
\begin{eqnarray*}
u_j(z) &=&\omega\left(\frac{d(z,z_j)}{2r}\right)u(z) + \left(1-\omega\left(\frac{d(z,z_j)}{2r}\right)\right)u(z_j), \quad j=1,\ldots, N,\ \text{and}\\
u_0(z) &=&\omega\left(\frac{d(z,\partial\B)}{2r}\right)u(z) + \left(1-\omega\left(\frac{d(z,\partial \B)}{2r}\right)\right) u(z_0)
\end{eqnarray*}
for some $z_0\in \partial \B$ and write 
\begin{gather*}
c-u_{j}(z)\underline{\Delta}_{s}=c-u(z_{j})\underline{\Delta}_{s}+(u(z_{j})-u_{j}(z))\underline{\Delta}_{s},
\end{gather*}
where each realization is considered with the same domain $X_1$ as $\underline{\Delta}_{s}$. 
Since $\|u(z_{j})-u_{j}(z)\|_{C(\mathbb{B})}$, and therefore the norm of 
$u(z_{j})-u_{j}(z)$ as a multiplier on $\cH^{0,\gamma}_p(\B)$, can be made arbitrarily small by taking $r$ small (for $j=0$ recall that $u$ is constant along the boundary), 
each $c-u_{j}\underline{\Delta}_{s}$ is $R$-sectorial of angle $\theta$. 

Fix a partition of unity $\phi_{j}\in C^{\infty}(\mathbb{B})$, $j=0,\ldots,N$, subordinate to the $B_j$ and functions $\psi_{j}\in C^{\infty}(\mathbb{B})$, supported in $B_j$, 
with $\psi_{j}=1$ on $\mathrm{supp}\, \phi_{j}$. 

Let $f\in X_{1}$, $g\in X_{0}$ and $\lambda\in\mathbb{C}$. 
Multiplying 
\begin{gather*}
\lambda f-u\underline{\Delta}_{s}f=g
\end{gather*}
by $\phi_{j}$ we find 
\begin{gather*}
\lambda \phi_{j}f-u\underline{\Delta}_{s}(\phi_{j}f)=\phi_{j}g-[u\underline{\Delta}_{s},\phi_{j}]f.
\end{gather*}
Applying the resolvent of $u_{j}\underline{\Delta}_{s}$ we get
\begin{gather*}
\phi_{j}f=(\lambda-u_{j}\underline{\Delta}_{s})^{-1}\phi_{j}g-(\lambda-u_{j}\underline{\Delta}_{s})^{-1}[u\underline{\Delta}_{s},\phi_{j}]f.
\end{gather*}
In view of the choice of $\psi_{j}$, the above equation becomes
\begin{gather*}
\phi_{j}f=\psi_{j}(\lambda-u_{j}\underline{\Delta}_{s})^{-1}\phi_{j}g-\psi_{j}(\lambda-u_{j}\underline{\Delta}_{s})^{-1}[u\underline{\Delta}_{s},\phi_{j}]f.
\end{gather*}
Summing up we obtain
\begin{gather}\label{inv1}
f=\sum_{j=0}^{N}\psi_{j}(\lambda-u_{j}\underline{\Delta}_{s})^{-1}\phi_{j}g-\sum_{j=0}^{N}\psi_{j}(\lambda-u_{j}\underline{\Delta}_{s})^{-1}[u\underline{\Delta}_{s},\phi_{j}]f.
\end{gather}
The commutator $[u\gD_{s},\phi_j]$ is a first order cone differential operator with non-smooth coefficients. 
In view of \eqref{embend} and the subsequent considerations, it maps $\cD(\underline\gD_{s})$ to $ \cH_p^{s+\gd,\gg+\gd}(\B)$ for suitably small $\gd>0$ (note that $\phi_0$ is constant near the boundary).
According to Corollary \ref{complex}, the latter space embeds into 
$\mathcal{D}((c-\underline{\Delta}_{s})^{\nu})$ for any $\nu<\gd/2$. 
We may now apply Corollary \ref{Nikos2} and find that 
\begin{eqnarray}\label{commutator}
(\lambda-u_{j}\underline{\Delta}_{s})^{-1}[u\underline{\Delta}_{s},\phi_{j}]\to 0,\quad \gl\to\infty,
\end{eqnarray}
in $\cL(X_1)$.
Hence, $\lambda-u\underline{\Delta}_{s}$ will be left invertible whenever $\lambda\notin\mathbb{R}_{\leq0}$ is sufficiently large. 

We denote the left inverse by $S_{\lambda}$. Then
\begin{eqnarray*}
\lefteqn{(\lambda-u\underline{\Delta}_{s})S_{\lambda}=(\lambda-u\underline{\Delta}_{s})\sum_{j=0}^{N}\psi_{j}(\lambda-u_{j}\underline{\Delta}_{s})^{-1}(\phi_{j}-[u\underline{\Delta}_{s},\phi_{j}]S_{\lambda})}\\
&=&\sum_{j=0}^{N}\psi_{j}(\phi_{j}-[u\underline{\Delta}_{s},\phi_{j}]S_{\lambda})-\sum_{j=0}^{N}[u\underline{\Delta}_{s},\psi_{j}](\lambda-u_{j}\underline{\Delta}_{s})^{-1}(\phi_{j}-[u\underline{\Delta}_{s},\phi_{j}]S_{\lambda}).
\end{eqnarray*}
Since $\sum_{j=0}^{N}\phi_{j}=1$ and $\sum_{j=0}^{N}[u\underline{\Delta}_{s},\phi_{j}]=0$, we conclude that, on $X_0$, 
\begin{gather}\label{e781}
(\lambda-u\underline{\Delta}_{s})S_{\lambda}=I-\sum_{j=0}^{N}[u\underline{\Delta}_{s},\psi_{j}](\lambda-u_{j}\underline{\Delta}_{s})^{-1}(\phi_{j}-[u\underline{\Delta}_{s},\phi_{j}]S_{\lambda}).
\end{gather}
We will next argue that the sum on the right hand side tends to zero in $\cL(X_0)$ as $\gl\to\infty$. Choose $1/2<\nu'<1$ and write
$$
(\lambda-u_{j}\underline{\Delta}_{s})^{-1}
= (c-u_j\underline\gD_{s})^{-\nu'} (c-u_j\underline\gD_{s})
(\gl-u_j\underline\gD_{s})^{-1}(c-u_j\underline\gD_{s})^{-1+\nu'}.
$$ 
Then the product of the last three terms tends to zero in $\cL(X_0)$ as $\gl\to\infty$ by Corollary 
\ref{Nikos2}. 
The operator $(c-u_i\underline\gD_{s})^{-\nu'}$ maps $X_0$ into $\cD((c-u_i\underline\gD_{s})^{\nu'})
\hookrightarrow \cH_p^{s+1,\gg+1}(\B)\oplus\mathbb{C}$ by Corollary \ref{complex}. 
As the commutator $[u\underline{\Delta}_{s},\psi_{j}]$ 
is a cone differential operator of order one it maps $\cH_p^{s+1,\gg+1}(\B)\oplus\mathbb{C}$ continuously to $X_{0}$.
Hence the sum on the right hand side of \eqref{e781} tends to zero and thus
$\lambda-u\underline{\Delta}_{s}$ has also a right inverse for large $\lambda\notin\mathbb{R}_{\leq0}$. 
Let
\begin{gather*}
R(\lambda)=\sum_{j=0}^{N}\psi_{j}(\lambda-u_{j}\underline{\Delta}_{s})^{-1}\phi_{j} \quad \mbox{and} \quad 
Q(\lambda)=\sum_{j={0}}^{N}\psi_{j}(\lambda-u_{j}\underline{\Delta}_{s})^{-1}[u\underline{\Delta}_{s},\phi_{j}].
\end{gather*}
As $Q(\lambda)\to0$ by \eqref{commutator}, we conclude from \eqref{inv1} via a Neumann series argument that, for $\lambda$ large, 
\begin{gather}\label{e551}
(\lambda-u\underline{\Delta}_{s})^{-1}=\sum_{k=0}^{\infty}(-1)^{k}Q^{k}(\lambda)R(\lambda).
\end{gather}
Starting from \eqref{e551} and splitting off the term for $k=0$ we have
\begin{eqnarray}
\lefteqn{\Big\|\sum_{\rho=1}^{K}
\lambda_{\rho}(\lambda_{\rho}+c-u\underline{\Delta}_{s})^{-1}x_{\rho}\epsilon_{\rho}\Big\|_{L^{2}(0,1;X_{0})}}\label{start}\\
&\leq&\Big\|\sum_{\rho=1}^{K}\lambda_{\rho}
\Big(\sum_{j=0}^{N}\psi_{j}(\lambda_{\rho}+c-u_{j}\underline{\Delta}_{s})^{-1}\phi_{j}\Big)x_{\rho}\epsilon_{\rho}\Big\|_{L^{2}(0,1;X_{0})}\nonumber\\
&&+\sum_{k=1}^{\infty}\Big\|\sum_{\rho=1}^{K}\lambda_{\rho}Q^{k}(\lambda_\rho+c)
\Big(\sum_{j=0}^{N}\psi_{j}(\lambda_{\rho}+c-u_{j}\underline{\Delta}_{s})^{-1}\phi_{j}\Big)x_{\rho}\epsilon_{\rho}\Big\|_{L^{2}(0,1;X_{0})}.\nonumber
\end{eqnarray}
We then estimate the first term on the right hand side by 
\begin{eqnarray}
\lefteqn{\le \Big\|\sum_{j=0}^{N}\psi_{j}\sum_{\rho=1}^{K}\lambda_{\rho}(\lambda_{\rho}+c-u_{j}\underline{\Delta}_{s})^{-1}\phi_{j}x_{\rho}\epsilon_{\rho}\Big\|_{L^{2}(0,1;X_{0})}}
\nonumber\\
&\le&\sum_{j=0}^{N}\|\psi_{j}\|_{\mathcal{L}(X_0)}\Big\|\sum_{\rho=1}^{K}\lambda_{\rho}(\lambda_{\rho}+c-u_{j}\underline{\Delta}_{s})^{-1}\phi_{j}x_{\rho}\epsilon_{\rho}\Big\|_{L^{2}(0,1;X_{0})}\nonumber\\
&\le& C_1(N+1)\max_{j}
\Big\|\sum_{\rho=1}^{K}\phi_{j}x_{\rho}\epsilon_{\rho}\Big\|_{L^{2}(0,1;X_{0})}\nonumber\\
&\le& 
C_2(N+1)\ \Big\|\sum_{\rho=1}^{K}x_{\rho}\epsilon_{\rho}\Big\|_{L^{2}(0,1;X_{0})},
\label{first}
\end{eqnarray} 
as required. Next we focus on the summands in the second term for $k=1,2,\ldots$. 
They are 
\begin{eqnarray}
\label{t21.9a}
\quad&\le &\sum_{j=0}^{N}\Big\|\sum_{\rho=1}^{K}\lambda_{\rho}Q^{k}(\lambda_\rho+c)
\psi_{j}(\lambda_{\rho}+c-u_{j}\underline{\Delta}_{s})^{-1}\phi_{j}x_{\rho}\epsilon_{\rho}\Big\|_{L^{2}(0,1;X_{0})}.
\end{eqnarray}

Expanding the product, we regroup this into a sum of $k$ factors; the first equals 
\begin{eqnarray}\label{t21.20}
\psi_j \lambda_\rho (\lambda_\rho+c-u_j\underline \Delta_s)^{-1};
\end{eqnarray}
the others are of the form 
\begin{eqnarray*}\label{t.21.21}
[u\underline \Delta_s,\phi_{j}]\psi_{i}(\lambda_\rho+c-u_i\underline \Delta_s)^{-1}
= [u\underline \Delta_s,\phi_{j}]\psi_{i}
(c-u_{i}\underline{\Delta}_{s})^{-1}(c-u_{i}\underline{\Delta}_{s})
(\lambda_\rho+c-u_i\underline \Delta_s)^{-1}
\end{eqnarray*}
for suitable $i,j \in \{0,\ldots, N\}$. Now $[u\underline \Delta_s,\phi_{j}]\psi_{i}$ is a first order cone 
differential operator. By a similar argument as before, the norm of 
$[u\underline \Delta_s,\phi_{j}]\psi_{i}
(c-u_{i}\underline{\Delta}_{s})^{-1}$
in $\cL(X_0)$ will tend to zero as $c\to \infty$. 
Given $\gve>0$, we choose $c$ so large that all these norms are $<\gve$. 
Using successively the $R$-sectoriality of $c-u_j\underline \Delta_s$ together with Lemma \ref{easy} and \eqref{RS2} we can estimate \eqref{t21.9a} by 
$$\le C_3(N+1) ((N+1)C\gve)^{k-1}\Big\|\sum_{\rho=1}^{K}x_{\rho}\epsilon_{\rho}\Big\|_{L^{2}(0,1;X_{0})}$$ 
with a fixed constant $C$.
Summing over $k$ and combining this with the estimate \eqref{first}, we see that
\eqref{start} is 
$$\le C_5\ \frac{N+1}{1-(N+1)C\gve} \ \Big\|\sum_{\rho=1}^{K}x_{\rho}\epsilon_{\rho}\Big\|_{L^{2}(0,1;X_{0})},
$$ 
which establishes the $R$-sectoriality in $S_\theta$. 

Next we will show that the resolvent of $c-u\underline{\Delta}_{s}$ for $s>0$ 
is the restriction of the resolvent of $c-u\underline{\Delta}_{0}$ to $\mathcal{H}_{p}^{s,\gamma}(\mathbb{B})$. It suffices to prove that the following identities 
$$
(u\underline{\Delta}_{0}-\lambda)^{-1}(u\Delta-\lambda)=I \quad \text{and} \quad (u\Delta-\lambda)(u\underline{\Delta}_{0}-\lambda)^{-1}=I
$$
hold in $X_1$ and $X_0$ respectively. Let $x\in\mathcal{D}(u\underline{\Delta}_{0})=\mathcal{H}_{p}^{2,\gamma+2}(\mathbb{B})\oplus\mathbb{C}$ such that $u\Delta x\in \mathcal{H}_{p}^{s,\gamma}(\mathbb{B})$. 

By \eqref{embend}, $X_{\frac{1}{q},q}$ embeds into $ \mathcal{H}_{p}^{s+2-\frac{2}{q}-\varepsilon,\gamma+2-\frac{2}{q}-\varepsilon}(\mathbb{B})\oplus\mathbb{C}$, for all $\varepsilon>0$, which is a Banach algebra. 
As shown in Lemma \ref{linv}, below, also $u^{-1}$ belongs to $\mathcal{H}_{p}^{s+2-\frac{2}{q}-\varepsilon,\gamma+2-\frac{2}{q}-\varepsilon}(\mathbb{B})\oplus\mathbb{C}$, for all $\varepsilon>0$. 
Since elements in that space are multipliers in $\mathcal{H}_{p}^{s,\gamma}(\mathbb{B})$, we conclude that $\Delta x\in \mathcal{H}_{p}^{s,\gamma}(\mathbb{B})$. 
Therefore, $x\in\mathcal{D}(\underline{\Delta}_{s,\max})\cap(\mathcal{H}_{p}^{2,\gamma+2}(\mathbb{B})\oplus\mathbb{C})$, and hence $x\in\mathcal{H}_{p}^{s+2,\gamma+2}(\mathbb{B})\oplus\mathbb{C}$. 

Let us now treat the case $s\geq1$, $s\in\mathbb{N}$, by induction.
Let $y_0,...,y_{n}$ be local coordinates on the support of $\phi_i$, with the understanding that, in the collar neighborhood of the boundary, we use local coordinates $x,y_1,...,y_n$ and replace the derivative $\partial_{y_0}$ in the computation below by $x\partial_x$. Denote by $M_{\phi_i}$ the multiplication operator by $\phi_i$. Then

\begin{eqnarray}\nonumber
\lefteqn{
\Big\|\sum_{\rho=1}^{K}\lambda_{\rho}(\lambda_{\rho}+c-u\underline{\Delta}_{s+1})^{-1}x_{\rho}\epsilon_{\rho}\Big\|_{L^{2}(0,1;\mathcal{H}_{p}^{s+1,\gamma}(\mathbb{B}))}
=\Big\|\sum_{\rho=1}^{K}\lambda_{\rho}(\lambda_{\rho}+c-u\underline{\Delta}_{s})^{-1}x_{\rho}\epsilon_{\rho}\Big\|_{L^{2}(0,1;\mathcal{H}_{p}^{s+1,\gamma}(\mathbb{B}))}}\\\nonumber 
&\le&\sum_{|a|\leq 1}\Big\|\sum_{j=0}^{N}\sum_{\rho=1}^{K}\partial_{y}^{a}M_{\phi_{j}}\lambda_{\rho}(\lambda_{\rho}+c-u\underline{\Delta}_{s})^{-1}x_{\rho}\epsilon_{\rho}\Big\|_{L^{2}(0,1;\mathcal{H}_{p}^{s,\gamma}(\mathbb{B}))}\\\nonumber
&\leq&\sum_{|a|\leq 1}\Big\|\sum_{j=0}^{N}\sum_{\rho=1}^{K}\lambda_{\rho}(\lambda_{\rho}+c-u\underline{\Delta}_{s})^{-1}\partial_{y}^{a}M_{\phi_{j}}x_{\rho}\epsilon_{\rho}\Big\|_{L^{2}(0,1;\mathcal{H}_{p}^{s,\gamma}(\mathbb{B}))}
\\\nonumber
&&\hspace{11.5em}+\sum_{|a|\leq 1}\Big\|\sum_{j=0}^{N}\sum_{\rho=1}^{K}\lambda_{\rho}[\partial_{y}^{a}M_{\phi_{j}},(\lambda_{\rho}+c-u\underline{\Delta}_{s})^{-1}]x_{\rho}\epsilon_{\rho}\Big\|_{L^{2}(0,1;\mathcal{H}_{p}^{s,\gamma}(\mathbb{B}))}\\\nonumber
&=&\sum_{|a|\leq 1}\Big\|\sum_{\rho=1}^{K}\sum_{j=0}^{N}\lambda_{\rho}(\lambda_{\rho}+c-u\underline{\Delta}_{s})^{-1}\partial_{y}^{a}M_{\phi_{j}}x_{\rho}\epsilon_{\rho}\Big\|_{L^{2}(0,1;\mathcal{H}_{p}^{s,\gamma}(\mathbb{B}))}\\\nonumber
&&+\sum_{|a|\leq 1}\Big\|\sum_{\rho=1}^{K}\sum_{j=0}^{N}\lambda_{\rho}(\lambda_{\rho}+c-u\underline{\Delta}_{s})^{-1}\\\label{ltr}
&&\hspace{5em}\times[\partial_{y}^{a}M_{\phi_{j}},c-u\underline{\Delta}_{s}](c-u\underline{\Delta}_{s})^{-1}(c-u\underline{\Delta}_{s})(\lambda_{\rho}+c-u\underline{\Delta}_{s})^{-1}x_{\rho}\epsilon_{\rho}\Big\|_{L^{2}(0,1;\mathcal{H}_{p}^{s,\gamma}(\mathbb{B}))}.
\end{eqnarray}

By the assumption on $q$ and $p$, the coefficients of the second order cone differential operator $[\partial_{y}^{a}M_{\phi_{i}},c-u\underline{\Delta}_{s}]$ are multipliers in $\mathcal{H}_{p}^{s,\gamma}(\mathbb{B})$ (note that by the induction hypothesis now $u$ belongs to $\mathcal{H}_{p}^{s+3-\frac{2}{q}-\varepsilon,\frac{n+1}{2}}(\mathbb{B})\oplus\mathbb{C}$). 
Therefore, the operators $[\partial_{y}^{a}M_{\phi_{i}},c-u\underline{\Delta}_{s}](c-u\underline{\Delta}_{s})^{-1}$ are bounded maps in $\mathcal{H}_{p}^{s,\gamma}(\mathbb{B})$. Using the $R$-sectoriality of $c-u\underline{\Delta}_{s}$ in $\mathcal{H}_{p}^{s,\gamma}(\mathbb{B})$, we obtain from \eqref{ltr} 
\begin{eqnarray*}
\lefteqn{\Big\|\sum_{\rho=1}^{K}\lambda_{\rho}(\lambda_{\rho}+c-u\underline{\Delta}_{s+1})^{-1}x_{\rho}\epsilon_{\rho}\Big\|_{L^{2}(0,1;\mathcal{H}_{p}^{s+1,\gamma}(\mathbb{B}))}}\\\nonumber
&\leq&C_6 \sum_{|a|\leq 1}\Big\|\sum_{\rho=1}^{K}\sum_{j=0}^{N}\partial_{y}^{a}M_{\phi_{j}}x_{\rho}\epsilon_{\rho}\Big\|_{L^{2}(0,1;\mathcal{H}_{p}^{s,\gamma}(\mathbb{B}))}
+C_6\sum_{|a|\leq 1}\Big\|\sum_{\rho=1}^{K}\sum_{j=0}^{N}[\partial_{y}^{a}M_{\phi_{j}},c-u\underline{\Delta}_{s}](c-u\underline{\Delta}_{s})^{-1}\\\nonumber
&&\hspace{19em}\times(c-u\underline{\Delta}_{s})(\lambda_{\rho}+c-u\underline{\Delta}_{s})^{-1}x_{\rho}\epsilon_{\rho}\Big\|_{L^{2}(0,1;\mathcal{H}_{p}^{s,\gamma}(\mathbb{B}))}
\\\nonumber
&\leq&C_7 \Big\|\sum_{\rho=1}^{K}x_{\rho}\epsilon_{\rho}\Big\|_{L^{2}(0,1;\mathcal{H}_{p}^{s+1,\gamma}(\mathbb{B}))}
+C_7\max_{j,a}\Big\|[\partial_{y}^{a}M_{\phi_{j}},c-u\underline{\Delta}_{s}](c-u\underline{\Delta}_{s})^{-1}\Big\|_{\mathcal{L}(\mathcal{H}_{p}^{s,\gamma}(\mathbb{B}))}\\
&&\hspace{16.5em}\times\Big\|\sum_{\rho=1}^{K}(c-u\underline{\Delta}_{s})(\lambda_{\rho}+c-u\underline{\Delta}_{s})^{-1}x_{\rho}\epsilon_{\rho}\Big\|_{L^{2}(0,1;\mathcal{H}_{p}^{s,\gamma}(\mathbb{B}))}
\end{eqnarray*}
\begin{eqnarray*}
&\leq&C_7 \Big\|\sum_{\rho=1}^{K}x_{\rho}\epsilon_{\rho}\Big\|_{L^{2}(0,1;\mathcal{H}_{p}^{s+1,\gamma}(\mathbb{B}))}+C_8\Big\|\sum_{\rho=1}^{K}x_{\rho}\epsilon_{\rho}\Big\|_{L^{2}(0,1;\mathcal{H}_{p}^{s,\gamma}(\mathbb{B}))}\\\nonumber
&\leq&C_9 \Big\|\sum_{\rho=1}^{K}x_{\rho}\epsilon_{\rho}\Big\|_{L^{2}(0,1;\mathcal{H}_{p}^{s+1,\gamma}(\mathbb{B}))},
\end{eqnarray*}
for suitable constants $C_6$, $C_7$, $C_8$ and $C_9$.

The case where $s\in\mathbb{R}$, $s\geq0$, follows by interpolation, see Lemma \ref{sharpint} and \cite[Theorem 3.19]{KS}.

Assume finally that $-1+\frac{n+1}{p}+\frac{2}{q}<s<0$. We basically proceed as in the case $s=0$. The crucial step is to prove that the norm of $u(z_{j})-u_{j}(z)$ as a multiplier in $\mathcal{H}_{p}^{s,\gamma}(\mathbb{B})$ can be made arbitrarily small by choosing the radius $r$ of the covering small. 
In view of the fact that $|s|<1-(n+1)/p$, Corollary \ref{multiplier} implies that it is sufficient to show that the norm of $u(z_{j})-u_{j}(z)$ in 
$\cH_p^{1-\gve, (n+1)/2}(\B)$ tends to zero as $r\to0$ for any fixed 
$\gve>0$ (note that the constant part of $u$ cancels when we take the difference). 
In view of the fact that our assumptions on $s,p$ and $q$ imply that $u\in \cH^{1+(n+1)/p+\gd,(n+1)/2+\gd}_p(\B)$ for suitably small $\gd>0$, this will follow form a standard interpolation inequality, see e.g. Proposition I.2.2.1 in \cite{Am}, provided we show that 
\begin{eqnarray}\label{0norm}
\|u(z_j)-u_j(z)\|_{\cH^{0,(n+1)/2}_p(\B)} && \text{tends to zero as }r\to0;\\
\label{1norm}
\|u(z_j)-u_j(z)\|_{\cH^{1,(n+1)/2}_p(\B)}&&\text{is bounded as } r\to 0.
\end{eqnarray}
Now \eqref{0norm} is obvious. As for \eqref{1norm} write
\begin{eqnarray}\label{t21.25}
u_j(z) - u(z_j) = \go\Big(\frac{d(z,z_j)}{2r}\Big) (u(z)-u(z_j)).
\end{eqnarray}
Being an element of $\cH_p^{1+(n+1)/p+\gd,(n+1)/2+\gd}(\B)$ for some $\gd>0$, $u$ is Lipschitz continuous in the interior of $\B$, so that 
$$|u(z)-u(z_j)|\le L d(z,z_j)$$ 
for some $L\ge0$. In view of the fact that $\go'\Big(\frac{d}{2r}\Big)\frac{d}{2r}$
is bounded uniformly in $r$, we obtain a uniform bound on the derivatives in \eqref{t21.25}
on the balls in the interior. 
Concerning $B_0$, we notice that, for $z=(x,y)$ we can take $d(z,\partial \B)=x$ in the definition of $u_0$ and use the fact that $\partial_y\go\Big(\frac{x}{2r}\Big) =0$, while 
$x\partial_x \go\Big(\frac{x}{2r}\Big) = \go'\Big(\frac{x}{2r}\Big) \frac x{2r}$ is again bounded.
This completes the argument. 
 \end{proof}

\begin{lemma}\label{linv}
Let $u\in \mathcal{H}_{p}^{s_0,\gamma_0}(\mathbb{B})\oplus\mathbb{C}$, for some $s_0>\frac{n+1}{p}$, $\gamma_0>\frac{n+1}{2}$. If $u$ is pointwise invertible, then $u^{-1}\in \mathcal{H}_{p}^{s_0,\gamma_0}(\mathbb{B})\oplus\mathbb{C}$. As a consequence, $\mathcal{H}_{p}^{s_0,\gamma_0}(\mathbb{B})\oplus\mathbb{C}$ is spectrally invariant in $C(\B)$ and therefore closed under holomorphic functional calculus. 
\end{lemma}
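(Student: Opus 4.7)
The plan is to reduce the invertibility statement to a Moser-type composition estimate in Mellin-Sobolev spaces via a simple algebraic identity. First, write $u=c+v$ with $c\in\C$ and $v\in\mathcal{H}_p^{s_0,\gamma_0}(\B)$. By Lemma \ref{c0}, $v$ is continuous on $\B^\circ$ with
\[
|v(x,y)|\le c'\,x^{\gamma_0-(n+1)/2}\,\|v\|_{\mathcal{H}_p^{s_0,\gamma_0}(\B)}\to 0 \quad\text{as}\quad x\to 0,
\]
so $u$ extends continuously to all of $\B$ with boundary value $c$. Pointwise invertibility forces $c\neq 0$, and compactness of $\B$ combined with continuity gives $|u|\ge\delta>0$ uniformly on $\B$. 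Now the key observation is the identity
\[
u^{-1}-\tfrac{1}{c}=F(v),\qquad F(\zeta):=\tfrac{1}{c+\zeta}-\tfrac{1}{c},
\]
where $F$ is holomorphic on a neighborhood of the compact range of $v$ and vanishes at $0$. It therefore suffices to show $F(v)\in\mathcal{H}_p^{s_0,\gamma_0}(\B)$.

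The main technical step is the composition statement: for $F$ smooth (or holomorphic) on a neighborhood of the range of a bounded $v\in\mathcal{H}_p^{s_0,\gamma_0}(\B)$ with $F(0)=0$, $F(v)$ again lies in $\mathcal{H}_p^{s_0,\gamma_0}(\B)$. Via Definition \ref{dms} this reduces to estimating $\mathcal{S}_{\gamma_0}(\omega\varphi_j F(v))$ in $H^{s_0}_p(\R^{1+n})$ near the boundary (the interior part is controlled by the standard Sobolev composition estimate since $s_0>(n+1)/p$ makes $H^{s_0}_p$ a Banach algebra). Writing $F(\zeta)=\zeta\,G(\zeta)$ with $G$ smooth and setting $\alpha:=\gamma_0-(n+1)/2>0$, a direct computation yields
\[
\mathcal{S}_{\gamma_0}(F(v))(t,y)=e^{\alpha t}F\bigl(e^{-\alpha t}\tilde v(t,y)\bigr)=\tilde v(t,y)\,G\bigl(e^{-\alpha t}\tilde v(t,y)\bigr),
\]
where $\tilde v:=\mathcal{S}_{\gamma_0}v$ is bounded by the pointwise estimate above and locally in $H^{s_0}_p$. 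The crucial cancellation is that the exponential weight, which would otherwise obstruct a composition estimate, is absorbed by the factor $\zeta$ in $F$. After localization by $\omega\varphi_j$, the relevant functions are supported in $\{t\ge T_0\}$ where $e^{-\alpha t}$ and its derivatives are uniformly bounded, so $e^{-\alpha t}\tilde v\in H^{s_0}_p\cap L^\infty$. Splitting $G=G(0)+H$ with $H(0)=0$, the Moser-type composition estimate gives $H(e^{-\alpha t}\tilde v)\in H^{s_0}_p$, and the Banach-algebra property then yields $\tilde v\cdot G(e^{-\alpha t}\tilde v)=G(0)\tilde v+\tilde v\,H(e^{-\alpha t}\tilde v)\in H^{s_0}_p$. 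Thus $F(v)\in\mathcal{H}_p^{s_0,\gamma_0}(\B)$ and hence $u^{-1}\in\mathcal{A}:=\mathcal{H}_p^{s_0,\gamma_0}(\B)\oplus\C$.

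Spectral invariance of $\mathcal{A}$ in $C(\B)$ is then immediate: any $u\in\mathcal{A}$ invertible in $C(\B)$ is, by the above, invertible in $\mathcal{A}$, so $\mathrm{spec}_{\mathcal{A}}(u)=\mathrm{spec}_{C(\B)}(u)=u(\B)$. For closedness under holomorphic functional calculus, fix $f$ holomorphic on a neighborhood of $u(\B)$, choose a contour $\Gamma$ encircling $u(\B)$ inside the domain of holomorphy, and use the Dunford representation
\[
f(u)=\frac{1}{2\pi i}\oint_\Gamma f(\zeta)(\zeta-u)^{-1}\,d\zeta;
\]
spectral invariance gives $(\zeta-u)^{-1}\in\mathcal{A}$ for every $\zeta\in\Gamma$, and norm-continuity of $\zeta\mapsto(\zeta-u)^{-1}$ in $\mathcal{A}$ makes the Bochner integral an element of $\mathcal{A}$. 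The main obstacle in the whole argument is the composition estimate; the exponential weight arising from $\mathcal{S}_{\gamma_0}$ would block a direct Moser argument, but the decomposition $F(\zeta)=\zeta G(\zeta)$, available precisely because $F(0)=0$ (which is why subtracting the constant $1/c$ from $u^{-1}$ is essential), cancels the weight and reduces everything to standard composition and multiplier results in $H^{s_0}_p(\R^{1+n})$.
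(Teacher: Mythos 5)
Your proof is correct, and it reorganizes the argument compared to the paper. The paper proceeds locally: it covers $\B$ by small balls plus a collar, replaces $u$ on each patch by a function $u_j$ that differs from the constant $u(z_j)$ by a small perturbation $v_j$, inverts each piece via the Bourdaud--Sickel composition theorems applied to $\zeta\mapsto\zeta(1+\zeta)^{-1}$, and reassembles through $1=\sum\phi_j u_j^{-1}u$. You instead work globally with the single decomposition $u=c+v$, where $c$ is the (necessarily nonzero) boundary value, and invert via $u^{-1}-c^{-1}=F(v)=vG(v)$. The two essential ingredients are nevertheless the same: a Moser/Bourdaud--Sickel composition estimate in $H^{s_0}_p(\R^{n+1})$ (which you should apply after extending $F$, resp.\ $H=G-G(0)$, to a globally smooth compactly supported function agreeing with it near the compact set $\overline{\mathrm{Ran}(v)}$ --- compactness and pointwise invertibility keep this set away from the pole at $-c$); and the factorization $F(\zeta)=\zeta G(\zeta)$ that trades one power of the exponential weight $e^{\alpha t}$ for the factor $\tilde v$, so that the composition itself is performed at the unweighted level $\gg=(n+1)/2$ and the weight $\gamma_0$ is recovered by the multiplier estimate of Lemma \ref{c0}. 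This is precisely the mechanism behind the paper's identity $u_0^{-1}=1-v_0+v_0\bigl(v_0(1+v_0)^{-1}\bigr)$ for the collar piece. What your global version buys is the elimination of the covering, the near-constant freezing, and the reassembly step, at the price of needing $F$ smooth on a neighborhood of the full range of $v$ rather than on a small disc; what the paper's version buys is reuse of the same localization machinery set up in the proof of Theorem \ref{t21}. Two cosmetic caveats (shared with the paper): ``pointwise invertible'' must be read as invertibility in $C(\B)$, since otherwise $c=0$ with $v$ nonvanishing on $\B^\circ$ would be a counterexample; and for non-integer $s_0$ the interior part also requires the composition theorem, not merely the Banach-algebra property of $H^{s_0}_p$.
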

\begin{proof} 
The case, where $s_0$ is a positive integer, is straightforward. 
Otherwise we choose the $B_j$, $u_j$ and $\phi_j$ as in the proof of Theorem \ref{t21}.
We assume that $r$ is chosen so small that 
$$\|u_j(z) -u(z_j)\|_{\sup} \le \frac12 |u(z_j)|.$$

In an initial step, we will show that the inverses of the $u_j$ have the asserted property. We consider first an interior ball $B_j$, $j\ge1$. 
We may assume that even $\overline {B_{3r/2}(z_j)}$ is contained in single coordinate neighborhood for $\B^\circ$. The function $v_j(z)=u_j(z) -u(z_j)$ is supported there, and its push-forward under the coordinate chart belongs to $H^{s_0}_{p}(\R^{n+1})$. 
To simplify the computation, below, suppose that $u(z_j)=1$. 
Theorems 6 and 10 in \cite{BS} show that $v_j(1+v_j)^{-1}$ also belongs to $H^{s_0}_{p}(\R^{n+1})$. Its pullback under the coordinate map then is an element of $\mathcal{H}_{p}^{s_0,\gamma_0}(\mathbb{B})$. 
Since 
$$u_j^{-1} = (1+v_j)^{-1} = 1-v_j(1+v_j)^{-1}$$
we conclude that $u_j^{-1} $ belongs to $\mathcal{H}_{p}^{s_0,\gamma_0}(\mathbb{B})\oplus \C$. 

As for $v_0$, we first note that it also is an element of $\mathcal{H}_{p}^{s_0,(n+1)/2}(\mathbb{B})$. 
Using possibly a further partition of unity, we may assume that it is supported in a single boundary chart. Denote by $v_*$ its push-forward under the coordinate map. According to Definition \ref{dms}, $V= \mathcal S_{(n+1)/2}v_*$ is an element of $H^{s_0}_p(\R^{n+1})$. 
By the above theorems, the same is true for $V(1+V)^{-1}$, and we deduce that $v_0(1+v_0)^{-1}$ is an
element of $\mathcal{H}_{p}^{s_0,(n+1)/2}(\mathbb{B})$. 
Now we write 
$$u_0^{-1} = (1+v_0)^{-1} = 1-v_0+ v_0\big(v_0(1+v_0)^{-1}\big).$$
Lemma \ref{c0} then shows that the right hand side is in $\mathcal{H}_{p}^{s_0,\gamma_0}(\mathbb{B})\oplus \C$. 

To see that $u^{-1}$ also has the asserted property, we recall that $u_j$ coincides with $u$ on the support of $\phi_j$, so that 
\begin{equation}\label{inv.1}
1=\sum \phi_j = \sum \phi_j u_j^{-1} u .
\end{equation}
This shows the spectral invariance of $\mathcal{H}_{p}^{s_0,\gamma_0}(\mathbb{B})\oplus \C$ in $C(\B)$. 

It is well-known that this implies the closedness under holomorphic functional calculus. We recall the argument for the convenience of the reader. 
If $h$ is a holomorphic function in a neighborhood of $ \mathrm{Ran}(-u)$, then
$$
h(u)=\frac{1}{2\pi i}\int_{\Gamma}h(-\lambda)(u+\lambda)^{-1}d\lambda, 
$$
where $\Gamma$ is a finite path around $\mathrm{Ran}(-u)$. Since the spectral invariance implies the continuity of inversion, we conclude that $h(u)\in\mathcal{H}_{p}^{s_0,\gamma_0}(\mathbb{B})\oplus\mathbb{C}$.
\end{proof}

\begin{lemma}\label{boundinv}Under the conditions of Lemma \ref{linv} let
$U$ be a bounded open subset of $\mathcal{H}_{p}^{s_0,\gamma_0}(\mathbb{B})\oplus\mathbb{C}$ consisting of functions $u$ such that $\Re u\geq\alpha>0$ for some fixed $\alpha$. 
Then the subset $\{u^{-1}\, |\, u\in U\}$ of $\mathcal{H}_{p}^{s_0,\gamma_0}(\mathbb{B})\oplus\mathbb{C}$ is also bounded. 
\end{lemma}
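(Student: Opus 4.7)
The plan is to quantify the argument in the proof of Lemma \ref{linv} so that every estimate is uniform in $u\in U$. The first ingredient is that $U$ is uniformly equicontinuous on $\B$. Lemma \ref{c0} already provides a uniform bound $\|u\|_\infty\le M$ for $u\in U$ and, near the boundary, the pointwise estimate $|u(x,y)-c_u|\le c'x^{\gamma_0-(n+1)/2}\|u-c_u\|_{\cH_p^{s_0,\gamma_0}(\B)}$, where $c_u$ is the constant component of $u$; away from the boundary, the usual Sobolev embedding $H_p^{s_0}\hookrightarrow C^\sigma$ for some $\sigma>0$ gives a uniform H\"older estimate. Hence there exists $r>0$, depending only on $U$ and $\alpha$, such that $|u(z)-u(z')|\le \alpha/4$ whenever $d(z,z')\le 2r$ and $u\in U$. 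I then fix, once and for all, a cover of $\B$ by a collar $B_0=[0,r)\times\partial\B$ and interior balls $B_j=B_r(z_j)$, $j=1,\ldots,N$, with a subordinate partition of unity $\{\phi_j\}$ and cutoffs $\omega(d(\cdot,z_j)/(2r))$, exactly as in the proof of Theorem \ref{t21}, and I define the localizations $u_j$, together with $v_j=u_j-u(z_j)$ and $w_j=v_j/u(z_j)$. The choice of $r$ forces $\|w_j\|_\infty\le 1/2$ and $|u(z_j)|^{-1}\le \alpha^{-1}$, while the cutoffs are fixed pointwise multipliers on $\cH_p^{s_0,\gamma_0}(\B)$ by Corollary \ref{multiplier}, so $\{w_j:u\in U\}$ is bounded in $\cH_p^{s_0,\gamma_0}(\B)$ uniformly in $u$.

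The central step is to bound $\|u_j^{-1}\|_{\cH_p^{s_0,\gamma_0}(\B)\oplus\C}$ uniformly in $u\in U$. Writing $u_j^{-1}=u(z_j)^{-1}(1-w_j(1+w_j)^{-1})$, it suffices to control $w_j(1+w_j)^{-1}$ in $\cH_p^{s_0,\gamma_0}(\B)$. Working in each coordinate chart after push-forward and, for the collar patch, after applying $\cS_{(n+1)/2}$ (which for the weight $(n+1)/2$ reduces to the pullback $v\mapsto v(e^{-t},y)$ and therefore commutes with pointwise multiplication), Theorems 6 and 10 of \cite{BS} yield a quantitative bound of the form $\|W(1+W)^{-1}\|_{H_p^{s_0}}\le F(\|W\|_{H_p^{s_0}})$ under the constraint $\|W\|_\infty\le 1/2$, for some continuous nondecreasing $F$. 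On the collar patch one first obtains a uniform bound at weight $(n+1)/2$; one then raises the weight to $\gamma_0$ by writing $w_j(1+w_j)^{-1}$ as the product of $w_j\in \cH_p^{s_0,\gamma_0}(\B)$ with $(1+w_j)^{-1}\in \cH_p^{s_0,(n+1)/2}(\B)\oplus\C$ and invoking the multiplier statement of Lemma \ref{c0}. This delivers the desired uniform bound on $\|u_j^{-1}\|$.

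Finally, the identity $1=\sum_j\phi_j u_j^{-1}u$ established in the proof of Lemma \ref{linv}, combined with $\phi_j u=\phi_j u_j$ (since $u_j=u$ on $\mathrm{supp}\,\phi_j$), gives $u^{-1}=\sum_{j=0}^N\phi_j u_j^{-1}$, so that $\|u^{-1}\|_{\cH_p^{s_0,\gamma_0}(\B)\oplus\C}$ is dominated by a fixed constant times $\max_j\|u_j^{-1}\|$ and is therefore uniformly bounded in $u\in U$. The main obstacle I expect is making the third step genuinely quantitative: one must extract from \cite{BS} not just the qualitative preservation statement that $W\mapsto W(1+W)^{-1}$ maps $H_p^{s_0}(\R^{n+1})\cap\{\|W\|_\infty\le 1/2\}$ into $H_p^{s_0}(\R^{n+1})$, but the fact that this nonlinear superposition sends bounded subsets to bounded subsets. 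This is implicit in the standard proofs via paradifferential calculus or Moser-type estimates, yet reading the explicit dependence out of the cited reference requires some care.
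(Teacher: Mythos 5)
Your argument is correct and follows essentially the same route as the paper: localize and transfer to $\R^{n+1}$ exactly as in the proof of Lemma~\ref{linv} and invoke the Bourdaud--Sickel composition estimates, made uniform over $U$ via the equicontinuity coming from Lemma~\ref{c0}. The quantitative ``bounded sets to bounded sets'' statement you flag as the main obstacle is precisely what the paper cites, namely \cite[Theorem 11]{BS}, so that gap closes by reference; the paper additionally treats positive integer $s_0$ separately via $\partial(u^{-1})=-u^{-2}\partial u$, a case your uniform localization argument covers anyway.
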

\begin{proof}
If $s_0$ is a positive integer, this is straightforward from the identity $\partial_y (u^{-1}) = 
u^{-2}\partial_y u$. For non-integer $s_0$ we transfer the problem to $\R^{n+1}$ as in the proof of Lemma \ref{linv} and apply \cite[Theorem 11]{BS}. 
\end{proof}

\begin{remark}\label{rem}
Let $\nu\in \R$ and $\theta$ in $[0,\pi)$. 
Under the conditions of Theorem $\ref{t21}$, the operator $c-u^\nu \underline{\Delta}_s$ is a closed linear operator on $X_0$ with domain $X_1$ which is $R$-sectorial of angle $\theta$, provided $c>0$ is sufficiently large. 

In fact, since $u$ belongs to 
$\mathcal{H}_{p}^{s+2-\frac{2}{q}-\varepsilon,\gamma+2-\frac{2}{q}-\varepsilon}(\mathbb{B})\oplus\mathbb{C}$ for each $\varepsilon>0$, the same is true for $u^\nu$ by Lemma $\ref{linv}$. As pointed out in the beginning of the proof of Theorem \ref{t21}, this is all we need to know about $u$ for the proof of the $R$-sectoriality.
\end{remark}

Summarizing what we have found we obtain the following result for the porous medium equation.

\begin{theorem}\label{pmt} 
Let $\lambda_{1}$ be the 
largest nonzero eigenvalue of the boundary Laplacian 
$\Delta_{\partial}$, induced by the metric $h$ on $\partial\mathbb{B}$. 
Recall that $\dim(\B)=n+1$, that
$$\overline\gve_n=-\frac{n-1}2+\sqrt{\left(\frac{n-1}2\right)^2-\lambda_1}\ >0,
$$
and that $\gg$ satisfies \eqref{choicegg}, i.e. 
\begin{eqnarray*}
\frac{n-3}2<\gg<\min\left\{\frac{n-3}2+\overline\gve_n ,\frac{n+1}2\right\}.
\end{eqnarray*}
Next choose $p,q$ so large that 
\begin{eqnarray}\label{pq1}
\frac{n+1}p+\frac2q<1 \quad\text{and}\quad \gamma>\frac{n-3}{2}+\frac{4}{q}.
\end{eqnarray}
Then, for any $s>-1+\frac{n+1}{p}+\frac{2}{q}$ and for any strictly positive initial value 
$$
u_{0}\in(\mathcal{H}_{p}^{s+2,\gamma+2}(\mathbb{B})\oplus\mathbb{C},\mathcal{H}_{p}^{s,\gamma}(\mathbb{B}))_{\frac{1}{q},q}
$$
there exists some $0<T\leq T_{0}$ such that the porous medium equation \eqref{e1}, \eqref{e2} in the space 
$L^{q}(0,T;\mathcal{H}_{p}^{s,\gamma}(\mathbb{B}))$
has a unique solution 
$$
u\in L^{q}(0,T;\mathcal{H}_{p}^{s+2,\gamma+2}(\mathbb{B})\oplus\mathbb{C})\cap W^{1,q}(0,T;\mathcal{H}_{p}^{s,\gamma}(\mathbb{B})).
$$
\end{theorem}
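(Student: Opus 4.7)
The plan is to reformulate \eqref{e1}--\eqref{e2} as the abstract quasilinear problem \eqref{e3}--\eqref{e4} with $X_0=\cH^{s,\gg}_p(\B)$, $X_1=\cH^{s+2,\gg+2}_p(\B)\oplus\C$, quasilinear operator $A(u)=m u^{m-1}\underline{\Delta}_s$, and right-hand side $F(u,t)=f(u,t)-m(m-1)u^{m-2}\skp{\nabla u,\nabla u}_g$, and then to invoke the Cl\'ement-Li theorem \ref{CL} on a suitable neighborhood $U$ of $u_0$ in $X_{1/q,q}$. The embedding from Lemma \ref{inter},
$X_{1/q,q}\hookrightarrow \cH^{s+2-2/q-\gve,\gg+2-2/q-\gve}_p(\B)\oplus\C,$
together with the hypotheses $\frac{n+1}p+\frac2q<1$ and $\gg>\frac{n-3}2+\frac4q$, guarantees that this target space is, for $\gve>0$ sufficiently small, a Banach algebra contained in $C(\B)$ (Lemma \ref{c0}) whose elements are multipliers in $\cH^{s,\gg}_p(\B)$ (Corollary \ref{multiplier}). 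Shrinking $U$ so that $\Re u\geq\ga/2$ on $\B$ for every $u\in U$, the holomorphic functional calculus of Lemma \ref{linv} places $u^{m-1}$, $u^{m-2}$ and $f(u,\cdot)$ in this algebra and makes them depend locally Lipschitz continuously on $u$ (and on $t$, for $f(u,\cdot)$); the accompanying uniform bounds follow from Lemma \ref{boundinv}.

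For the linearized operator at $u_0$, Remark \ref{rem} applied with $\nu=m-1$ produces, for $c$ sufficiently large, the $R$-sectoriality of $c-mu_0^{m-1}\underline{\Delta}_s$ of arbitrary angle $\gt\in[0,\pi)$, in particular of an angle exceeding $\pi/2$. Since $\cH^{s,\gg}_p(\B)$ is UMD, Weis's theorem then furnishes maximal $L^q$-regularity, and absorbing the constant shift into the data yields maximal $L^q$-regularity of $A(u_0)$ in the sense demanded by Theorem \ref{CL}. Hypothesis (H1), that $u\mapsto A(u)\in\cL(X_1,X_0)$ is locally Lipschitz on $U$, follows from the Banach-algebra Lipschitz dependence of $u^{m-1}$ combined with Corollary \ref{multiplier}. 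Hypothesis (H3) is trivial ($g\equiv 0$).

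The delicate step is (H2), $F\in C^{1-,1-}([0,T_0]\times U,X_0)$. The term $f(u,t)$ is handled by the holomorphic calculus as above. For the gradient-squared term $u^{m-2}\skp{\nabla u,\nabla u}_g$, one has to track the weights carefully, since the factor $1/x^2$ in $\skp{\cdot,\cdot}_g$ costs two units of weight. The components $x\partial_x u$ and $\partial_{y^j}u$ inherited from $X_{1/q,q}$ lie in $\cH^{s+1-2/q-\gve,\gg+2-2/q-\gve}_p(\B)$, and the multiplicative structure of Mellin-Sobolev spaces (where weights combine as $\gg_1+\gg_2-(n+1)/2$) places their pointwise product in $\cH^{s+1-2/q-\gve,\,2\gg+2-4/q-2\gve-(n+1)/2}_p(\B)$; division by $x^2$ drops the weight to $2\gg-\frac{n-3}2-\frac4q-2\gve$. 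This embeds into $\cH^{s,\gg}_p(\B)$ precisely when $\gg\ge\frac{n-3}2+\frac4q+2\gve$, which is exactly the reason for the stronger lower bound on $\gg$ in \eqref{pq1}. Multiplication by $u^{m-2}$ (an element of the Banach algebra above) preserves the target, and the bilinearity of $\skp{\nabla\cdot,\nabla\cdot}_g$ together with the Lipschitz estimate for $u^{m-2}$ (Lemma \ref{boundinv}) produces the required Lipschitz continuity in $u$. Once (H1)--(H3) are verified, Theorem \ref{CL} delivers the unique solution on some interval $[0,T]$. The main obstacle in this plan is exactly the weight bookkeeping in the gradient-squared term; everything else is a direct assembly of Theorem \ref{t21}, Remark \ref{rem}, Lemma \ref{linv}, Lemma \ref{inter}, and Corollary \ref{multiplier}.
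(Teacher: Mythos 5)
Your proposal is correct and follows essentially the same route as the paper: the quasilinear reformulation, the Cl\'ement--Li framework with the same $X_0$, $X_1$, Remark \ref{rem} for the linearized operator, and Lemmas \ref{linv}, \ref{boundinv}, \ref{inter} together with Corollary \ref{multiplier} for (H1)--(H2). The only difference is presentational: where you invoke the general product rule for weights ($\gg_1+\gg_2-\frac{n+1}{2}$, with a harmless typo in your intermediate exponent) to place $\skp{\nabla u,\nabla u}_g$ in $X_0$, the paper carries out the same computation by writing $\skp{\tilde x^{\delta}\nabla v_1,\tilde x^{-\delta}\nabla v_2}_g$ for a suitable $0<\delta<1-2/q$ with $\gg+1-2/q+\delta>\frac{n+1}{2}$, so that Corollary \ref{multiplier} applies verbatim --- both versions isolate $\gg>\frac{n-3}{2}+\frac{4}{q}$ as the operative condition.
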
 
\begin{proof} 
We shall apply Cl\'ement and Li's Theorem \ref{CL}, to the porous medium equation in the form (\ref{e3}), (\ref{e4}), with $X_{0}=\cH^{s,\gg}_p(\B)$ and $X_{1}=\cH^{s+2,\gg+2}_p(\B)\oplus \C$. 

For each initial value $u_{0}\in X_{{1}/{q},q}=(X_{1},X_{0})_{{1}/{q},q}$ such that $u_{0}> \alpha>0$, the operator $m u_{0}^{m-1}\underline{\Delta}$ has maximal 
$L^q$-regularity by Remark \ref{rem}. It remains to check the conditions (H1) and (H2). 
By our assumptions on $s,p,q$ and $\gamma$, \eqref{embend} in connection with Corollary \ref{multiplier} implies that $X_{{1}/{q},q}$ embeds into a Banach algebra of bounded multipliers on $X_0$. 
So let $U$ be a bounded open neighborhood of $u_{0}$ in $X_{{1}/{q},q}$. Since $(X_{1},X_{0})_{{1}/{q},q}$ embeds into $C(\B)$, we may assume $U$ to consist of functions with real part $\geq \alpha$. For any $u_1,u_2\in U$ and real $\nu$, we have then
\begin{gather}\label{poly}
u_{1}^{\nu}-u_{2}^{\nu}=(u_{2}-u_{1})\frac{1}{2\pi i}\int_{\Gamma}(-\lambda)^{\nu}(u_1+\lambda)^{-1}(u_2+\lambda)^{-1}d\lambda,
\end{gather}
where $\Gamma$ is a fixed finite path around $\cup_{u\in U}\mathrm{Ran}(-u)$ in $\{ \Re\lambda<0\}$.

Concerning (H1): Equation \eqref{poly} in connection with the embedding \eqref{embend}, Lemma \ref{linv}, Lemma \ref{boundinv}, and the boundedness of $U$ implies that 
\begin{eqnarray}\label{h1}
\lefteqn{\|mu_{1}^{m-1}\underline{\Delta}-mu_{2}^{m-1}\underline{\Delta}\|_{\mathcal{L}(X_1,X_0)}\leq\|mu_{1}^{m-1}-mu_{2}^{m-1}\|_{\mathcal{L}(X_0)}}
\nonumber\\
\quad &\leq&C_{1}\|u_1-u_2\|_{\cL(X_0)}\leq C_{2}\|u_1-u_2\|_{X_{\frac{1}{q},q}},
\end{eqnarray}
for suitable constants $C_1,C_2>0$. 

Concerning (H2): Equation \eqref{embend} shows that $\nabla u_1$ and $\nabla u_2$ 
belong to $\cH^{s+1-2/q-\gve,\gg+1-2/q-\gve}_p(\B)$ for each $\gve>0$. As $2/q<1$, they are elements of $X_0$. 
Write 
\begin{eqnarray}
\lefteqn{\|f(u_{1},t_1)+m(m-1)u_{1}^{m-2}\langle \nabla u_{1},\nabla u_{1}\rangle_{g}-f(u_{2},t_2)-m(m-1)u_{2}^{m-2}\langle \nabla u_{2},\nabla u_{2}\rangle_{g}\|_{X_0}}
\nonumber\\
&\leq&\|f(u_{1},t_1)-f(u_{2},t_2)\|_{X_0}+m\, |m-1|\, \|u_{1}^{m-2}\langle \nabla u_{1},\nabla u_{1}\rangle_{g}-u_{2}^{m-2}\langle \nabla u_{2},\nabla u_{2}\rangle_{g}\|_{X_0}
\nonumber\\
&\leq&\|f(u_{1},t_1)-f(u_2,t_1)\|_{X_0}+\|f(u_2,t_1)-f(u_{2},t_2)\|_{X_0}
\nonumber\\
&&+m\, |m-1|\, \|u_{1}^{m-2}\langle \nabla u_{1},\nabla u_{1}\rangle_{g}-u_{2}^{m-2}\langle \nabla u_{1},\nabla u_{1}\rangle_{g}+u_{2}^{m-2}\langle \nabla u_{1},\nabla u_{1}\rangle_{g}
\nonumber\\
&&-u_{2}^{m-2}\langle \nabla u_{2},\nabla u_{1}\rangle_{g}+u_{2}^{m-2}\langle \nabla u_{2},\nabla u_{1}\rangle_{g}-u_{2}^{m-2}\langle \nabla u_{2},\nabla u_{2}\rangle_{g}\|_{X_0},
\label{pmt.1}
\end{eqnarray}
with $t_{1},t_{2}\in(0,T_0)$. By assumption, 
\begin{gather*}
f(u,t)=\frac{1}{2\pi i}\int_{\Gamma}f(-\lambda,t)(u+\lambda)^{-1}d\lambda, \quad u\in U, \,\,\, t\in[0,T_0].
\end{gather*}
Therefore,
\begin{gather}\label{ee1}
f(u_1,t)-f(u_2,t)=(u_2 - u_1)\frac{1}{2\pi i}\int_{\Gamma}f(-\lambda,t)(u_1+\lambda)^{-1}(u_2+\lambda)^{-1}d\lambda, \quad t\in[0,T_0],
\end{gather}
and 
\begin{gather}\label{ee2}
f(u,t_1)-f(u,t_2)=\frac{1}{2\pi i}\int_{\Gamma}\big(f(-\lambda,t_1)-f(-\lambda,t_2)\big)(u+\lambda)^{-1}d\lambda, \quad u\in U.
\end{gather}

Equation \eqref{ee1}, the embedding \eqref{embend} together with Lemma \ref{linv}, Lemma \ref{boundinv}, the boundedness of $U$ and the fact that $X_{1/q,q}$ embeds into a Banach algebra of multipliers on $X_0$ shows that the first term on the right hand side of Equation \eqref{pmt.1} is bounded by 
$c_1\|u_1-u_2\|_{X_{1/q,q}}$ for a suitable constant $c_1$. The second term is bounded by $c_2|t_1-t_2|$ in view of \eqref{ee2} and the Lipschitz continuity of $f(-\lambda,t)$ in $t$.

Neglecting the factor $m\, |m-1|$ we estimate the third term by 
\begin{eqnarray}
\label{pmt.2}
\lefteqn{\|u_1^{m-2}-u_2^{m-2}\|_{\cL(X_0)}\|\skp{\nabla u_1,\nabla u_1}_g\|_{X_0} }\\
&&+ 
\|u_2^{m-2}\|_{\cL(X_0)} \left(\|\skp{\nabla(u_1-u_2),\nabla u_1}_g\|_{X_0} 
+ \|\skp{\nabla(u_1-u_2),\nabla u_2}_g\|_{X_0}\right).\nonumber
\end{eqnarray}
Let $\tilde{x}$ be a smooth function on $\mathbb{B}$ that is equal to $x$ in $[0,1/2]\times\partial\mathbb{B}$ and constant 1 outside $[0,1]\times\partial\mathbb{B}$.
Choose $0<\delta < 1-2/q$ such that $\gamma+1-2/q+\gd>(n+1)/2$, cf. \eqref{pq}.
Given $v_1,v_2\in X_{1/q,q}$ we find for the local partial derivatives that
$\tilde x^\delta\partial v_1 \in \cH^{s+1-2/q-\gve,(n+1)/2}$ for each $\gve>0$ and $\tilde x^{-\delta} \partial v_2 \in X_0$, where, in the collar neighborhood of the boundary with coordinates $(x,y)$ we use $\partial_x$ and $\frac1x \partial_{y_j}$, $j=1,\ldots,n$. 
Hence the norm of $\skp{\nabla v_1,\nabla v_2}_g$ in $X_0$ can be estimated with Corollary \ref{multiplier} by the norms of the $\tilde x^\delta \partial v_1$ as multipliers on $X_0$ and the norms of $x^{-\delta}\partial v_2$ in $X_0$. Thus 
\begin{eqnarray}
\|\skp{\nabla v_1, \nabla v_2}_g\|_{X_0} = \|\skp{\tilde x^\delta \nabla v_1, \tilde x^{-\delta}\nabla v_2}_g\|_{X_0} 
\label{pmt.3}
\le c_1\|v_1\|_{X_{1/q,q}}\|v_2\|_{X_{1/q,q}}
\end{eqnarray}
for a suitable constants $c_0, c_1$. 

This enables us to estimate the first term in \eqref{pmt.2} with the help of Equation \eqref{poly} and the second and third with \eqref{pmt.3} by $c_3\|u_1-u_2\|_{X_{1/q,q}}$ 
for a suitable constant $c_3$ and thus completes the argument. 
\end{proof}

\section{The Equation in Spaces with Asymptotics}\setcounter{equation}{0}

In this section we will obtain more precise statements on the asymptotics of the solutions to the porous medium equation near the tip. 
As before, let $s\in \R$ and $1<p,q<\infty$ with 
\begin{gather}\label{spq}
-1 +\frac{n+1}p+\frac2q<\min\{0,s\}.
\end{gather}
For the time being, we continue to assume that $\gamma$ satisfies \eqref{choicegg} with $\gg>\frac{n-3}2+\frac2q$ and denote by 
$$\underline\gD_s: \cD(\underline\gD_s)= \cH^{s+2,\gg+2}_p(\B)\oplus \C\to \cH^{s,\gamma}_p(\B)$$ 
the unbounded operator in $\cH^{s,\gg}_p(\B)$ associated with these data. 

Below, we will consider the Laplacian as an unbounded operator in $Y_0= \cD(\underline\gD_s)$ with domain $Y_1 = \cD(\underline\gD_s^2)$. 
In order to obtain once more a maximally regular solution of the porous medium equation we will have to suppose that $\dim(\B)\not=3$, to make further 
assumptions on the spectrum of $\gD_\partial$, and to slightly change $\gamma$. 
We start with the following elementary result.

\begin{lemma}\label{scale}
Let $0<\ga\le u$ be a strictly positive 
function in $$ X_{_{\frac{1}{q},q}}:=(\mathcal{H}_{p}^{s+2,\gamma+2}(\mathbb{B})\oplus\mathbb{C},\mathcal{H}_{p}^{s,\gamma}(\mathbb{B}))_{\frac{1}{q},q}.$$ 
We let 
\begin{eqnarray}\label{scale.0}
\cD(u\underline\gD_s) = \cD(\underline\gD_s) = \mathcal{H}_{p}^{s+2,\gamma+2}(\mathbb{B})\oplus\mathbb{C}
\end{eqnarray}
and define recursively for $k\in\mathbb{N}$
$$\mathcal{D}((u\underline{\Delta}_{s})^{k+1}):=\{v\in\mathcal{D}((u\underline{\Delta}_{s})^{k})\, |\,u\underline{\Delta}_{s}v \in\mathcal{D}((u\underline{\Delta}_{s})^{k})\}.$$
Then, for any $\theta\in[0,\pi)$, there exists some $c>0$ such that 
\begin{eqnarray}\label{scale.1}
c-u\underline{\Delta}_{s}:\mathcal{D}((u\underline{\Delta}_{s})^{k+1})\to \mathcal{D}((u\underline{\Delta}_{s})^{k})
\end{eqnarray}
is $R$-sectorial of angle $\theta$.
\end{lemma}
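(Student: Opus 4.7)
I would argue by induction on $k$. The base case $k=0$ is exactly Theorem \ref{t21} (applied with $c$ large enough for the fixed angle $\theta$); let me fix such a $c$ once and for all and write $A=c-u\underline\Delta_s$. By the spectral picture of Theorem \ref{t21}, $0\in\rho(-A)$ and the resolvent $(A+\lambda)^{-1}$ commutes with $u\underline\Delta_s$ on its natural domain for all $\lambda\in S_\theta$. This commutation is the single structural fact that drives every step below.

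In the inductive step I must check three things at level $k+1$: (a) $A$ is closed as an unbounded operator from $\cD((u\underline\Delta_s)^{k+1})$ to $\cD((u\underline\Delta_s)^k)$ with domain $\cD((u\underline\Delta_s)^{k+2})$; (b) the domain is dense in the range space; and (c) the $R$-bound \eqref{RS1} on $S_\theta$. Closedness is immediate from the recursive definition of the scale and the closedness of $A$ at level $k$ (if $x_n\to x$ and $Ax_n\to z$ in $\cD((u\underline\Delta_s)^k)$, then level-$k$ closedness gives $Ax=z$, and $u\underline\Delta_s x=cx-z\in\cD((u\underline\Delta_s)^k)$ so $x\in\cD((u\underline\Delta_s)^{k+2})$). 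For density I use a Yosida-type approximation: the identity $(u\underline\Delta_s)^j(n+A)^{-1}=(n+A)^{-1}(u\underline\Delta_s)^j$ shows that $n(n+A)^{-1}$ maps $\cD((u\underline\Delta_s)^{k+1})$ into $\cD((u\underline\Delta_s)^{k+2})$ and converges strongly to the identity on $\cD((u\underline\Delta_s)^{k+1})$, because at the base level $A$ is sectorial with dense domain so $n(n+A)^{-1}z\to z$ in $X_0$ for every $z\in X_0$, applied with $z=(u\underline\Delta_s)^j y$ for $j\le k+1$.

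The main (but mild) calculational point is (c), the $R$-sectoriality. The graph norm on $\cD((u\underline\Delta_s)^{k+1})$ is equivalent to
\[
\|y\|_{\cD((u\underline\Delta_s)^{k+1})}\sim\|y\|_{\cD((u\underline\Delta_s)^k)}+\|u\underline\Delta_s y\|_{\cD((u\underline\Delta_s)^k)}.
\]
Using commutation, $u\underline\Delta_s(A+\lambda_\rho)^{-1}=(A+\lambda_\rho)^{-1}u\underline\Delta_s$ on $\cD((u\underline\Delta_s)^{k+1})$, so for $\lambda_1,\ldots,\lambda_N\in S_\theta$ and $x_1,\ldots,x_N\in\cD((u\underline\Delta_s)^{k+1})$ the expression
\[
\Big\|\sum_\rho\epsilon_\rho\lambda_\rho(A+\lambda_\rho)^{-1}x_\rho\Big\|_{L^2(0,1;\cD((u\underline\Delta_s)^{k+1}))}
\]
is bounded, up to the equivalence above, by the sum of
\[
\Big\|\sum_\rho\epsilon_\rho\lambda_\rho(A+\lambda_\rho)^{-1}x_\rho\Big\|_{L^2(0,1;\cD((u\underline\Delta_s)^k))}\ \text{and}\ \Big\|\sum_\rho\epsilon_\rho\lambda_\rho(A+\lambda_\rho)^{-1}u\underline\Delta_s x_\rho\Big\|_{L^2(0,1;\cD((u\underline\Delta_s)^k))}.
\]
The inductive hypothesis applied to both (with $x_\rho$ replaced by $u\underline\Delta_s x_\rho\in\cD((u\underline\Delta_s)^k)$ in the second) bounds each term by the $L^2$-Rademacher norm of the $x_\rho$ in $\cD((u\underline\Delta_s)^k)$ and in $\cD((u\underline\Delta_s)^k)$ after applying $u\underline\Delta_s$ respectively, whose sum is again equivalent to $\|\sum\epsilon_\rho x_\rho\|_{L^2(0,1;\cD((u\underline\Delta_s)^{k+1}))}$. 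This closes the induction, giving the same angle $\theta$ and the same $c$ at every level; the only potential obstacle is the bookkeeping of graph-norm equivalences, which is harmless because $u\underline\Delta_s$ commutes with its own resolvent.
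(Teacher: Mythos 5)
Your proposal is correct and follows essentially the same route as the paper: induction on $k$, base case from Theorem \ref{t21}, commutation of the operator with its own resolvent, and the equivalence of the norm on $\mathcal{D}((u\underline{\Delta}_{s})^{k+1})$ with a graph norm over $\mathcal{D}((u\underline{\Delta}_{s})^{k})$ to transfer the $R$-bound. The paper streamlines the last step by using $\|(c-u\underline{\Delta}_{s})\,\cdot\,\|_{\mathcal{D}((u\underline{\Delta}_{s})^{k})}$ alone as the equivalent norm (since $c-u\underline{\Delta}_{s}$ is invertible) and cites Lemma V.1.2.3 of Amann for the plain sectoriality on the scale, which you instead verify by hand; these are cosmetic differences.
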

\begin{proof} 
Definition \eqref{scale.0} makes sense, since $u$ is a multiplier on $\cH^{s,\gamma}_p(\B)$ by our choice of $q$. 
As $c-u\underline\Delta_s$ is sectorial on 
$\cD(\underline\gD_s)$, it is also sectorial in 
\eqref{scale.1} for each $k$ by Lemma V.1.2.3 in \cite{Am}. 
For the $R$-sectoriality we proceed by induction starting with Theorem \ref{t21} for $k=0$. 
Assume that the assertion holds for some $k\ge0$. Then, for any $\lambda_{1},...,\lambda_{K}\in\{z\in\mathbb{C}\, |\, |{\arg}z|\leq \theta\}$, $K\in\mathbb{N}$, and $x_{1},...,x_{K}\in \mathcal{D}((u\underline{\Delta}_{s})^{k+1})$, 
\begin{eqnarray*}
\lefteqn{\Big\|\sum_{\rho=1}^{K}\lambda_{\rho}(c-u\underline{\Delta}_{s}+\lambda_\rho)^{-1}\epsilon_{\rho}x_{\rho}\Big\|_{L^{2}(0,1;\mathcal{D}((u\underline{\Delta}_{s})^{k+1}))}}\\
&\le&c_1\Big\|\sum_{\rho=1}^{K}(c-u\underline{\Delta}_{s})\big(\lambda_{\rho}(c-u\underline{\Delta}_{s}+\lambda_\rho)^{-1}\epsilon_{\rho}x_{\rho}\big)\Big\|_{L^{2}(0,1;\mathcal{D}((u\underline{\Delta}_{s})^{k}))}\\
&=&c_1\Big\|\sum_{\rho=1}^{K}\lambda_{\rho}(c-u\underline{\Delta}_{s}+\lambda_\rho)^{-1}\epsilon_{\rho}(c-u\underline{\Delta}_{s})x_{\rho}\Big\|_{L^{2}(0,1;\mathcal{D}((u\underline{\Delta}_{s})^{k}))}\\
&\leq&c_2\Big\|\sum_{\rho=1}^{K}\epsilon_{\rho}(c-u\underline{\Delta}_{s})x_{\rho}\Big\|_{L^{2}(0,1;\mathcal{D}((u\underline{\Delta}_{s})^{k}))}\\
&\le&c_3\Big\|\sum_{\rho=1}^{K}\epsilon_{\rho}x_{\rho}\Big\|_{L^{2}(0,1;\mathcal{D}((u\underline{\Delta}_{s})^{k+1}))},
\end{eqnarray*}
for appropriate constants $c_1,c_2$ and $c_3$ and $c>0$.
\end{proof}
 
\subsection{The domain of the bilaplacian}\label{bilaplacian}
Associated with the extension $\underline{\gD}_s$ of the Laplacian given in \eqref{dds} we define the
bilaplacian $\underline{\Delta}_{s}^{2}$ as the unbounded operator in $\mathcal{H}_{p}^{s,\gamma}(\mathbb{B})$ with domain
\begin{gather*}
\mathcal{D}(\underline{\Delta}_{s}^{2})=\{ u\in \mathcal{D}(\underline{\Delta}_{s}) \, | \, \underline{\Delta}_{s}u\in \mathcal{D}(\underline{\Delta}_{s})\}.
\end{gather*}
According to Lemma 4.3 in \cite{RS}, 
\begin{gather}\label{domdelta2}
\mathcal{D}(\underline{\Delta}_{s}^{2})
=\mathcal{H}_{p}^{s+4,\gamma+4}(\mathbb{B})
\oplus\bigoplus_{\rho}\widetilde{\mathcal{E}}_{\rho}\oplus\mathbb{C}.
\end{gather}
Here, the summation in the direct sum is over all poles of the inverse of the conormal symbol of 
$\underline{\Delta}_{s}^{2}$ such that the associated asymptotics space
$\widetilde{\mathcal{E}}_{\rho}$ is a subset of $\cD(\underline\gD_s)$. 
In more detail: The inverse of the conormal symbol 
has been computed in \cite[Section 4.2]{RS}:
\begin{eqnarray*}
\gs_M(\gD^2)(z)^{-1} &=&\sum_{j=0}^\infty\frac{1}{(z-q_j^+)(z-q_j^-)(z+2-q_j^+)(z+2-q_j^-)}\pi_j,
\end{eqnarray*}
where the $q^\pm_j$ are as before and $\pi_j$ is the $L^2$-orthogonal projection 
onto $E_j$.
The poles therefore lie in the points $\rho=q_j^\pm$ and $\rho=q_j^\pm-2$, $j=0,1,\ldots$. 
The asymptotics spaces have the form 
\begin{gather}\label{erho}
\widetilde{\mathcal{E}}_{\rho}=\{x^{-\rho}\log(x)\omega(x)e_{1}(y)+x^{-\rho}\omega(x)e_{0}(y)\}.
\end{gather}
Here, $\omega$ is a cutoff function and, for $\rho$ as above, $e_{0}$, $e_{1}$ belong to the eigenspace of the boundary Laplacian to the eigenvalue $\rho$; the $\log$-terms only occur if $\rho$ is a double pole. 

\subsection{Geometric Assumption.}
We will now suppose that 
\begin{eqnarray}\label{gl1}
\gl_1<\begin{cases} -9/4, & n=1\\ -n, &n\ge 3.\end{cases}. \end{eqnarray}
This implies that the value $\overline\gve_n$ introduced in Equation \eqref{epsilonbar} is 
larger than $3/2$ for $n=1$ and larger than $1$ for $n>2$. 

We can then restrict the choice of the weight $\gg$ further by asking that 
\begin{eqnarray}\label{choiceggrest1}
-\frac{1}2<\gg<\min\left\{\overline\gve_1-2, 0\right\} \text{ for } n=1,
\end{eqnarray} 
and 
\begin{eqnarray}\label{choiceggrest}
\frac{n-3}2<\gg<\min\left\{\frac{n-3}2+\overline\gve_n-1, \frac{n-1}2\right\} \text{ for } n\ge 3.
\end{eqnarray} 

This has an important consequence on the asymptotics spaces arising in the domain of the bilaplacian. Recall that a function $x^{-\rho}\log^kx\,e(y)$ for $0\not=e\in C^\infty(\partial \B)$ 
belongs to $\cH^{s,\gs}_p(\B)$ (for any choice of $s\in \R$ and $1<p<\infty$) if and only if
$\Re \rho <(n+1)/2-\gs.$
In order to have $\widetilde{\cE}_\rho\subset \cD(\underline\gD_s)$ we need to have 
$\widetilde\cE_\rho\subset \cH^{s+2,\gg+2}_p(\B)$ and thus 
$$\Re \rho <\frac{n+1}2-\gg-2= \frac{n-3}2-\gg.$$ 
On the other hand we know that the only values of $\rho$ that can arise are those of the form $q_j^\pm$ and $q_j^\pm-2$. This leads to the following statement.

\begin{lemma}\label{embedding}Let $n\not=2$. 
With the choice of the weight $\gg$ in \eqref{choiceggrest1}/\eqref{choiceggrest} we have 
$$\cD(\underline\gD_s^2)\subseteq \cH^{s+4,\frac{n+1}2+\min\{\overline\gve_n,2\}-\gve}_p(\B)\oplus \C \ \text{ for every } \ \gve>0.$$
Note that $\frac{n+1}2+\overline\gve_n>\gamma+3$ by the choice of $\gg$ in 
\eqref{choiceggrest1}/\eqref{choiceggrest}.
\end{lemma}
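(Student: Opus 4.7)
My plan is to reduce the inclusion to a piece-by-piece check on the explicit decomposition of $\cD(\underline\gD_s^2)$ furnished by \eqref{domdelta2}, combined with the elementary characterization recalled just above the lemma: $x^{-\rho}\log^kx\,e(y)$ (for $0\neq e\in C^\infty(\partial\B)$) lies in $\cH^{s,\sigma}_p(\B)$ if and only if $\Re\rho<\frac{n+1}{2}-\sigma$.

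I would first write
\[
\cD(\underline\gD_s^2)=\cH^{s+4,\gg+4}_p(\B)\oplus\bigoplus_\rho\widetilde\cE_\rho\oplus\C,
\]
where $\rho$ ranges over poles of $\sigma_M(\gD^2)^{-1}$ with $\widetilde\cE_\rho\subset\cD(\underline\gD_s)$. The constant summand $\C$ embeds trivially into the $\C$ on the right. For the regular summand, I would invoke the monotonicity $\cH^{s,\gg_1}_p(\B)\hookrightarrow\cH^{s,\gg_2}_p(\B)$ whenever $\gg_1\geq\gg_2$; the inequality $\gg+3<\frac{n+1}{2}+\overline\gve_n$ from the note in the lemma, after a harmless adjustment of $\gve$, yields exactly the bound $\gg+4\geq\frac{n+1}{2}+\overline\gve_n-\gve$, giving $\cH^{s+4,\gg+4}_p(\B)\hookrightarrow\cH^{s+4,\frac{n+1}{2}+\overline\gve_n-\gve}_p(\B)$.

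For each asymptotics summand $\widetilde\cE_\rho$ I would first enumerate the allowed indices. The potential poles are $\rho=q_j^\pm$ and $\rho=q_j^\pm-2$ for $j\in\N_0$, and the condition $\widetilde\cE_\rho\subset\cD(\underline\gD_s)$ translates via the characterization into $\Re\rho<\frac{n-3}{2}-\gg$. The lower bound on $\gg$ immediately rules out $q_j^+$ and, for $n\geq 3$, also $q_j^+-2$, since these are non-negative. The tightened upper bound on $\gg$ in \eqref{choiceggrest1}/\eqref{choiceggrest}, combined with the geometric assumption \eqref{gl1} on $\gl_1$, then allows only $\rho$'s with $\Re\rho\leq-\overline\gve_n$, namely $\rho=q_j^-$ for $j\geq 1$ and $\rho=q_j^--2$ for $j\geq 0$. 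Applying the characterization in reverse yields $\widetilde\cE_\rho\subset\cH^{s+4,\frac{n+1}{2}+\overline\gve_n-\gve}_p(\B)$ for every $\gve>0$, as required.

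The principal obstacle is the bookkeeping in the case analysis: one must carefully rule out intermediate admissible poles such as $q_0^--2=-2$, which might a priori sit between $-\overline\gve_n$ and $\frac{n-3}{2}-\gg$ on the real axis. This is exactly where the specific form of the upper bound on $\gg$ (involving the shift by $\overline\gve_n-1$ for $n\geq 3$, respectively $\overline\gve_n-2$ for $n=1$) together with the strict spectral assumption on $\gl_1$ becomes indispensable, as these jointly ensure that $\overline\gve_n$ is large enough that all surviving poles respect the estimate $\Re\rho\leq-\overline\gve_n$ and no pole can spoil the claimed embedding.
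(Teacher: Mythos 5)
You follow the paper's route: decompose $\cD(\underline\gD_s^2)$ via \eqref{domdelta2}, treat the constant summand trivially, and test the remaining summands against the criterion $x^{-\rho}\log^kx\,e(y)\in\cH^{s,\gs}_p(\B)\Leftrightarrow\Re\rho<\frac{n+1}2-\gs$; the paper phrases the pole bookkeeping as showing that no $\rho$ of the form $q_j^\pm$ or $q_j^\pm-2$ lies in the critical interval $(-\overline\gve_n,\frac{n-3}2-\gg]$, and your enumeration of which poles survive is the same. However, two of your steps are not valid as written, and both founder on the same point.

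For the regular summand, $\cH^{s+4,\gg+4}_p(\B)\hookrightarrow\cH^{s+4,\frac{n+1}2+\overline\gve_n-\gve}_p(\B)$ holds if and only if $\gg+4\ge\frac{n+1}2+\overline\gve_n-\gve$, i.e.\ if and only if $\overline\gve_n-\gve\le\gg+4-\frac{n+1}2$: an \emph{upper} bound on $\overline\gve_n$. The inequality $\frac{n+1}2+\overline\gve_n>\gg+3$ quoted from the note is a \emph{lower} bound and cannot produce it, however you adjust $\gve$; since $\gg$ may be taken arbitrarily close to $\frac{n-3}2$ (resp.\ $-\frac12$ for $n=1$), what you actually need is $\overline\gve_n\le2$ (resp.\ $\le\frac52$). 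The same upper bound resurfaces at the pole $\rho=q_0^--2=-2$, which you rightly single out as the principal obstacle but then dispose of with reasoning that runs backwards. Under every admissible $\gg$ one has $-2<\frac{n-3}2-\gg$ (this needs only $\gg<\frac{n+1}2$), so $\widetilde\cE_{-2}$ always occurs in $\cD(\underline\gD_s^2)$ and no upper bound on $\gg$ can remove it; and $\widetilde\cE_{-2}\subset\cH^{s+4,\frac{n+1}2+\overline\gve_n-\gve}_p(\B)$ for all $\gve>0$ is equivalent to $-2\le-\overline\gve_n$, i.e.\ to $\overline\gve_n\le2$. The assumption \eqref{gl1} pushes $\overline\gve_n$ \emph{up} ($>3/2$, resp.\ $>1$), so your claim that it ensures ``$\overline\gve_n$ is large enough'' goes in the wrong direction: enlarging $\overline\gve_n$ makes the required inequality harder, not easier, to meet. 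To be fair, the paper's own proof is equally silent on this pole --- it lists $-2$ among the candidates and asserts without comment that none lies in the critical interval --- so on that point you have reproduced the published argument together with its weakest link; the inverted inequality in the regular-summand step, however, is yours alone. A watertight statement must either assume $\overline\gve_n\le2$ or replace the target weight by $\frac{n+1}2+\min\{\overline\gve_n,2\}-\gve$.
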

\begin{proof} 
In view of the specific form \eqref{domdelta2} of the domain, we have to show that there is no $\rho$ with $\Re\rho $ in the 
interval $(\max\{-\overline\gve_n,-2\},\frac{n-3}2-\gg]$. 

Let us consider first the case $n=1$, where $\overline \gve_1>3/2$. 
By \eqref{choiceggrest1}, 
$\frac{n-3}2-\gg<-1/2$. On the other hand, we have $q^\pm_0 = 0$, thus $q^\pm_0-2=-2$, $q_1^\pm= \pm\overline\gve_1$, 
hence $q^+_1-2>-1/2$. So none of the $\rho$ lies in the 
critical interval.

For $n\ge3$ we have $\overline \gve_n>1$ and $\frac{n-3}2-\gg<0$. 
As for the possible asymptotics, we have $q_0^- = 0$, so $q_0^--2=-2$ , 
$q^+_0=n-1$ and thus $q^+_0-2 = n-3\ge0$, $q^-_1=-\overline\gve_n<-1$. 
Again, none of the $\rho$ lies in the critical interval. 
\end{proof}

\begin{lemma}\label{inter2}
Let $n\not=2$ and the weight $\gg$ be chosen as in \eqref{choiceggrest1}/\eqref{choiceggrest}. Then
$$(\cD(\underline\gD_s^2),\cD(\underline\gD_s))_{\frac{1}{q},q}\hookrightarrow \cH^{s+4-\frac{2}{q}-\varepsilon,\sigma-\gve}_p(\B)\oplus\C, $$
for $\sigma = \frac{n+1}2+\min\{\overline\gve_n,2\}-\left(\frac{n-3}2+\min\{\overline\gve_n,2\}-\gamma\right)\frac1q$ and any $\varepsilon>0$.

As $\gg>\frac{n-3}2$ by \eqref{choiceggrest}, we see that 
$\gs>\frac{n+1}2+\min\{\overline\gve_n,2\}\left(1-\frac1q\right)>\frac{n+1}2$.
\end{lemma}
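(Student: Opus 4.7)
The plan is to lift the inclusions for the two ends of the couple through the real interpolation functor and then invoke Lemma \ref{int}. First, by Lemma \ref{embedding}, for any $\eta>0$
\[
\cD(\underline{\Delta}_s^2)\hookrightarrow \cH^{s+4,\tau_\eta}_p(\B)\oplus\C,\qquad \tau_\eta:=\tfrac{n+1}{2}+\overline\gve_n-\eta,
\]
while $\cD(\underline{\Delta}_s)=\cH^{s+2,\gg+2}_p(\B)\oplus\C$ by \eqref{dds}. Monotonicity of $(\cdot,\cdot)_{1/q,q}$ immediately yields
\[
(\cD(\underline{\Delta}_s^2),\cD(\underline{\Delta}_s))_{\frac1q,q}\hookrightarrow\bigl(\cH^{s+4,\tau_\eta}_p(\B)\oplus\C,\,\cH^{s+2,\gg+2}_p(\B)\oplus\C\bigr)_{\frac1q,q}.
\]

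The next step is to peel off the $\C$-summand. The choice \eqref{choiceggrest1}/\eqref{choiceggrest} of $\gg$ ensures that both $\tau_\eta>\tfrac{n+1}{2}$ and $\gg+2>\tfrac{n+1}{2}$; hence every element of $\cH^{s+4,\tau_\eta}_p(\B)$ or $\cH^{s+2,\gg+2}_p(\B)$ (and of their sum) decays to zero as $x\to 0^+$, so $\C$ meets the sum only in $\{0\}$. Consequently the Mellin--Sobolev/constant splitting is canonical on $Y_0+Y_1$ (with $Y_j$ denoting the $j$th member of the couple), and the projection $P$ onto $\C$ is a well defined bounded operator on $Y_0+Y_1$ which restricts to bounded projections $Y_j\to\C$. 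Interpolating $P$ and $I-P$ separately we obtain
\[
(Y_0,Y_1)_{\frac1q,q}\hookrightarrow\bigl(\cH^{s+4,\tau_\eta}_p(\B),\,\cH^{s+2,\gg+2}_p(\B)\bigr)_{\frac1q,q}\oplus\C.
\]

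An application of Lemma \ref{int} with $\gt=1/q$, $s_0=s+4$, $s_1=s+2$, $\gg_0=\tau_\eta$, $\gg_1=\gg+2$ then shows that, for arbitrary $\gd,\gve'>0$, this interpolation space embeds into $\cH^{s+4-2/q-\gd,\,\tilde\sigma-\gve'}_p(\B)$ with
\[
\tilde\sigma=(1-\tfrac1q)\tau_\eta+\tfrac1q(\gg+2)=\sigma-(1-\tfrac1q)\eta.
\]
Given the target $\gve>0$, it suffices to choose $\eta,\gd,\gve'$ so small that $\gd\le\gve$ and $(1-1/q)\eta+\gve'\le\gve$; this delivers the desired inclusion into $\cH^{s+4-2/q-\gve,\,\sigma-\gve}_p(\B)\oplus\C$.

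The only step that calls for genuine attention is the second: one must check that the canonical projection onto $\C$ is well defined on $Y_0+Y_1$, which rests precisely on the fact that the chosen weights exclude the constants from both Mellin--Sobolev spaces. The remainder is a routine matching of the error parameters $\eta,\gd,\gve'$ against the single loss $\gve$.
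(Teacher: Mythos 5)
Your proposal is correct and follows essentially the same route as the paper: embed $\cD(\underline\gD_s^2)$ via Lemma \ref{embedding}, split off the $\C$-summand by interpolation of direct sums (the paper cites Proposition I.2.3.3 in \cite{Am} for the step you justify by hand via the bounded projection onto $\C$), and then apply Lemma \ref{int} to the Mellin--Sobolev components. Your explicit verification that the constants are excluded from both weighted spaces, and the bookkeeping matching $\eta,\gd,\gve'$ against $\gve$, are exactly the details the paper leaves implicit.
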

\begin{proof}
According to Lemma \ref{embedding} 
$$(\cD(\underline\gD_s^2),\cD(\underline\gD_s))_{\frac{1}{q},q}\hookrightarrow
(\cH^{s+4,\frac{n+1}2+\min\{\overline\gve_n,2\}-\gve}_p(\B)\oplus \C,\cH^{s+2,\gg+2}_p(\B)\oplus \C)_{\frac{1}{q},q}.$$
Interpolation for direct sums, see e.g. Proposition I.2.3.3 in \cite{Am}, yields the desired embedding. 
\end{proof}

\begin{proposition}\label{eqdm} 
Let $n\not=2$, 
and $\gamma$ satisfy \eqref{choiceggrest1}/\eqref{choiceggrest}. 
For every $u\in \cD(\underline\Delta_s)$ 
which is strictly positive on $\B$ we then have 
$\mathcal{D}((u\underline{\Delta}_{s})^{2})=\mathcal{D}(\underline{\Delta}_{s}^{2})$.
\end{proposition}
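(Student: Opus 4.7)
The plan is to reduce the equality of the two domains to the single claim that multiplication by $u$ is a bounded automorphism of the Banach algebra $\mathcal{D}(\underline{\Delta}_s)=\mathcal{H}_p^{s+2,\gamma+2}(\mathbb{B})\oplus\mathbb{C}$. Indeed, both $\mathcal{D}(\underline{\Delta}_s^2)$ and $\mathcal{D}((u\underline{\Delta}_s)^2)$ consist of those $v\in\mathcal{D}(\underline{\Delta}_s)$ for which, respectively, $\underline{\Delta}_s v$ or $u\underline{\Delta}_s v$ lies in $\mathcal{D}(\underline{\Delta}_s)$. Thus, once we know multiplication by $u$ is an isomorphism of $\mathcal{D}(\underline{\Delta}_s)$, then for any $w\in\mathcal{H}_p^{s,\gamma}(\mathbb{B})$ we obtain the equivalence $w\in\mathcal{D}(\underline{\Delta}_s)\Leftrightarrow uw\in\mathcal{D}(\underline{\Delta}_s)$ (the backward direction using the pointwise identity $w=u^{-1}(uw)$), which yields the claim.

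For the Banach algebra property of $\mathcal{D}(\underline{\Delta}_s)$, I would invoke Lemma \ref{c0}. The condition \eqref{spq} forces $s+2>1+(n+1)/p+2/q>(n+1)/p$, and the bounds \eqref{choiceggrest1}/\eqref{choiceggrest} give $\gamma+2>(n+1)/2$ in both the cases $n=1$ and $n\ge 3$. Hence $\mathcal{H}_p^{s+2,\gamma+2}(\mathbb{B})$ is a Banach algebra up to an equivalent norm, and since constants act as bounded multipliers the direct sum with $\mathbb{C}$ inherits this structure.

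Next I would verify that $u$ is pointwise invertible. Writing $u=u_0+c$ with $u_0\in\mathcal{H}_p^{s+2,\gamma+2}(\mathbb{B})$ and $c\in\mathbb{C}$, the pointwise bound in Lemma \ref{c0} shows that $u_0$ decays at rate $x^{\gamma+2-(n+1)/2}$ near the tip, so $u$ extends continuously to $\mathbb{B}$ with $u=c$ on $\partial\mathbb{B}$. Since $u$ is strictly positive on the compact set $\mathbb{B}$, we have $u\ge\alpha>0$ for some $\alpha$. Lemma \ref{linv}, applied with $s_0=s+2$ and $\gamma_0=\gamma+2$, then produces $u^{-1}\in\mathcal{D}(\underline{\Delta}_s)$.

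Combining these, multiplication by $u$ becomes a bounded bijection of $\mathcal{D}(\underline{\Delta}_s)$ onto itself whose inverse is multiplication by $u^{-1}$, completing the argument. There is no real obstacle in this proof; the entire content of the statement reduces to the Banach algebra structure of $\mathcal{D}(\underline{\Delta}_s)$ combined with the spectral invariance supplied by Lemma \ref{linv}, both of which hold under the standing hypotheses on $s,p,q$ and $\gamma$.
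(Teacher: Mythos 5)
Your proof is correct and follows essentially the same route as the paper: reduce the statement to the equivalence $u\underline{\Delta}_s v\in\mathcal{D}(\underline{\Delta}_s)\Leftrightarrow\underline{\Delta}_s v\in\mathcal{D}(\underline{\Delta}_s)$, which holds because $\mathcal{H}_p^{s+2,\gamma+2}(\mathbb{B})\oplus\mathbb{C}$ is a Banach algebra containing both $u$ and, by Lemma \ref{linv}, $u^{-1}$. Your additional verifications of the hypotheses of Lemmas \ref{c0} and \ref{linv} are accurate and merely make explicit what the paper leaves implicit.
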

\begin{proof} 
In view of \eqref{scale.0}, we have $\mathcal{D}((u\underline{\Delta}_{s})^{2})=\{v\in \mathcal{D}(\underline{\Delta}_{s}) |\, u\underline{\Delta}_{s}v \in \mathcal{D}(\underline{\Delta}_{s})\}$. 
Since $u$ is strictly positive, Lemma \ref{linv} shows that also 
$u^{-1}\in \cH^{s+2,\gamma+2}_p(\B)\oplus\C$. 
As this is a Banach algebra, we conclude that 
$u\underline\Delta_sv\in \mathcal{D}(\underline{\Delta}_{s}) $ if and only if 
$\underline\Delta_sv\in \mathcal{D}(\underline{\Delta}_{s}) $. This shows the assertion. 
\end{proof}

\subsection{Analysis of the Porous Medium Equation} 
For $\gg$ satisfying \eqref{choiceggrest1}/\eqref{choiceggrest} let 
\begin{gather}\label{X0}
Y_{0}=\mathcal{H}_{p}^{s+2,\gamma+2}(\mathbb{B})\oplus\mathbb{C},
\end{gather}
equipped with the norm
\begin{gather*}
\|u_1+ c\|_{Y_{0}}=\|u_{1}\|_{\mathcal{H}_{p}^{s+2,\gamma+2}(\mathbb{B})}+|c|,
\quad u_{1}\in \mathcal{H}_{p}^{s+2,\gamma+2}(\mathbb{B}), c\in\mathbb{C}.
\end{gather*}
Lemma \ref{c0} implies that $Y_{0}$ is a Banach algebra (up to the choice of an equivalent norm). Moreover, $Y_0$ consists of bounded continuous functions, i.e. $Y_{0}\hookrightarrow C(\mathbb{B})$. 

In the sequel we shall use the closed extension $\underline{\Delta}$ of the Laplacian in $Y_{0}$ with domain
\begin{eqnarray}\label{ed2}
\mathcal{D}(\underline{\Delta})=Y_{1}:=\mathcal{D}(\underline{\Delta}_{s}^{2})
=\mathcal{H}_{p}^{s+4,\gamma+4}(\mathbb{B})
\oplus\bigoplus_{\rho}\widetilde{\mathcal{E}}_{\rho}\oplus\mathbb{C}
\end{eqnarray}
as defined in \eqref{domdelta2}. From the standard properties of operators in powers scales, see e.g. Lemma V.1.2.3 in \cite{Am}, and Theorem \ref{t11}, we obtain the following.
\begin{proposition}\label{t222}
For $c\notin \R_{\le0}$, $\theta\in[0,\pi)$ and $\phi>0$, we have $c-\underline{\Delta}\in \mathcal{P}(\theta)\cap\mathcal{BIP}(\phi)$. 
\end{proposition}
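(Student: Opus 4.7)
The plan is to reduce the assertion directly to Theorem \ref{t11} via the standard transfer of sectoriality and bounded imaginary powers to the next level of the power scale. Observe first that $\underline\gD$, as defined in \eqref{ed2}, is precisely the part of $\underline\gD_s$ in $Y_0=\cD(\underline\gD_s)=X_1$: its domain $Y_1=\cD(\underline\gD_s^2)=\{v\in\cD(\underline\gD_s):\underline\gD_s v\in\cD(\underline\gD_s)\}$ coincides with the natural ``second-step'' domain of $\underline\gD_s$, and $\underline\gD$ acts by restriction. Equipping $Y_0$ with the graph norm of $\underline\gD_s$ (equivalent to the norm used here, since $c-\underline\gD_s$ is invertible for $c\notin\R_{\le0}$), we are exactly in the setting of Lemma V.1.2.3 in \cite{Am}.

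First I would invoke Theorem \ref{t11} to obtain, for any $c>0$, $\gt\in[0,\pi)$ and $\phi>0$, that $c-\underline\gD_s\in\cP(\gt)\cap\mathcal{BIP}(\phi)$ on $X_0=\cH^{s,\gg}_p(\B)$. Then Amann's power-scale lemma yields: the induced operator on $X_1=Y_0$ with domain $\cD(\underline\gD_s^2)=Y_1$ inherits both properties with the \emph{same} sectoriality angle and power angle, because $(c-\underline\gD_s)^{it}$ commutes with $c-\underline\gD_s$ on its domain and so has the same norm on $X_0$ as on $X_1$ (in the graph norm); similarly the resolvent $(c-\underline\gD_s+\gl)^{-1}$ maps $X_1$ into $X_1$ and its $\cL(X_1)$-norm is controlled by its $\cL(X_0)$-norm. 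This gives $c-\underline\gD\in\cP(\gt)\cap\mathcal{BIP}(\phi)$ for every $c>0$.

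Finally, to extend from $c>0$ to arbitrary $c\notin\R_{\le0}$, I would simply repeat the argument at the end of the proof of Theorem \ref{t11}: sectoriality of angle $\gt<\pi$ already guarantees $\C\setminus\R_{\le0}\subset\rho(\underline\gD)$, so $c-\underline\gD\in\cP(\gt)$ for all such $c$; and for the BIP part one writes
\[
(c-\underline\gD)^z=(c'-\underline\gD)^z+\frac{c-c'}{2\pi i}\int_{\gG_\gt}(-\gl)^z(c-\underline\gD+\gl)^{-1}(c'-\underline\gD+\gl)^{-1}d\gl
\]
with $c'>0$ large, and bounds $\|(c-\underline\gD)^z\|\le Me^{\phi|\Im z|}+M'e^{(\pi-\gt)|\Im z|}$; letting $\gt\nearrow\pi$ and invoking Lemma III.4.7.4 in \cite{Am} gives $c-\underline\gD\in\mathcal{BIP}(\phi)$. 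There is no substantive obstacle here; the only thing to verify carefully is the identification of $\underline\gD$ with the power-scale restriction of $\underline\gD_s$, which is immediate from \eqref{ed2}.
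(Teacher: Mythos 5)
Your proposal is correct and follows essentially the same route as the paper, which derives the proposition in one line from Theorem \ref{t11} together with the power-scale transfer of Lemma V.1.2.3 in \cite{Am}, after identifying $\underline\gD$ as the part of $\underline\gD_s$ in $Y_0=\cD(\underline\gD_s)$ with domain $\cD(\underline\gD_s^2)$. Your additional care in passing from $c>0$ to general $c\notin\R_{\le0}$ via the Dunford-integral comparison is a welcome elaboration of a step the paper leaves implicit, but it does not change the argument.
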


\begin{proposition}\label{p2} 
Let $n\not=2$, 
and $\gamma$ satisfy \eqref{choiceggrest1}/\eqref{choiceggrest}. For each $\gt\in[0,\pi)$ and each $u\in Y_{1/q,q}$ which is strictly positive on $\mathbb{B}$,
$c-u\underline{\Delta}: Y_1\rightarrow Y_0$ is $R$-sectorial of angle $\gt$ for sufficiently large $c>0$. In particular, $-u\underline\Delta$ has maximal $L^{q}$-regularity.
\end{proposition}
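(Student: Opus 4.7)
The plan is to bootstrap Lemma \ref{scale} one step up in the power scale of $u\underline{\Delta}_s$, identify the resulting domains with $Y_1$ and $Y_0$ via Proposition \ref{eqdm}, and conclude maximal $L^q$-regularity through Weis's theorem. First I would verify three preliminary facts: that $Y_0 = \cD(\underline{\Delta}_s) = \cH_p^{s+2,\gamma+2}(\B) \oplus \C$ is UMD, being a direct sum of a UMD Mellin-Sobolev space with $\C$; that the strictly positive $u \in Y_{\frac{1}{q},q}$ also lies in the interpolation space $X_{\frac{1}{q},q}$ featured in Lemma \ref{scale}, which follows from the continuous inclusions $Y_1 \hookrightarrow Y_0 \hookrightarrow X_0 = \cH_p^{s,\gamma}(\B)$ combined with the monotonicity of real interpolation in its base spaces; and that the graph norms of $\cD(u\underline{\Delta}_s)$ and $\cD((u\underline{\Delta}_s)^2)$ are equivalent to the intrinsic norms on $Y_0$ and $Y_1$, which is immediate from the invertibility of $c-\underline{\Delta}_s$ furnished by Theorem \ref{t11}.

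Next I would invoke Lemma \ref{scale} with $k=1$ and the angle $\theta \in [0,\pi)$: it produces some $c > 0$ such that
\[
c - u\underline{\Delta}_s : \cD\bigl((u\underline{\Delta}_s)^2\bigr) \longrightarrow \cD(u\underline{\Delta}_s)
\]
is $R$-sectorial of angle $\theta$. The crucial input at this step is Proposition \ref{eqdm}, identifying $\cD((u\underline{\Delta}_s)^2) = \cD(\underline{\Delta}_s^2) = Y_1$; combined with the tautology $\cD(u\underline{\Delta}_s) = \cD(\underline{\Delta}_s) = Y_0$ from \eqref{scale.0}, this yields $R$-sectoriality of $c - u\underline{\Delta}_s$ as an unbounded operator from $Y_1$ to $Y_0$. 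Since on $Y_1 \subseteq \cD(\underline{\Delta}_s)$ the action of $u\underline{\Delta}$ coincides with that of $u\underline{\Delta}_s$, the $R$-sectoriality transfers verbatim to $c - u\underline{\Delta} : Y_1 \to Y_0$.

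The step I regard as the main obstacle has in fact already been handled earlier, namely the identification $\cD((u\underline{\Delta}_s)^2) = \cD(\underline{\Delta}_s^2)$ in Proposition \ref{eqdm}; this is precisely where the geometric assumption \eqref{gl1} and the sharper weight ranges \eqref{choiceggrest1}/\eqref{choiceggrest} pay off, as they force $u^{-1}$ to lie in the Banach algebra $\cH_p^{s+2,\gamma+2}(\B) \oplus \C$ through Lemma \ref{linv} and thereby prevent any shrinkage of the iterated domain under the insertion of $u$. With the $R$-sectoriality in $Y_0$ in hand, the maximal $L^q$-regularity of $u\underline{\Delta}$ follows from Weis's theorem, since $Y_0$ is UMD and the angle $\theta$ may be chosen strictly greater than $\pi/2$.
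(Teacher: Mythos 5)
Your proof is correct and follows essentially the same route as the paper, whose entire argument reads: ``This is immediate from Proposition \ref{eqdm}, Lemma \ref{scale} and the fact that $Y_{1/q,q}\hookrightarrow X_{1/q,q}$.'' You have simply made explicit the same three ingredients (Lemma \ref{scale} with $k=1$, the domain identification of Proposition \ref{eqdm}, and the embedding of the interpolation spaces), together with the routine UMD and norm-equivalence checks and the appeal to Weis's theorem for the final maximal-regularity claim.
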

\begin{proof}
This is immediate from Proposition \ref{eqdm}, Lemma \ref{scale} and the fact that $Y_{1/q,q}\hookrightarrow X_{1/q,q}$.
\end{proof}

The restriction on $\lambda_{1}$ now allows us to control the action of $\nabla$ on the interpolation spaces:

\begin{lemma}\label{l1}Let $n\not=2 $, 
and $\gg$ be chosen according to \eqref{choiceggrest1}/\eqref{choiceggrest}. 
Assume in addition that $q$ is so large that 
$\gg<\overline\gve_1-2-1/(q-1)$ for $n=1$ and 
$\gg<(n-3)/2+\overline\gve_n -1-1/(q-1)$ for $n\ge3$. 
Then, in local coordinates near the boundary, the operators $\partial_{x}$ and $\frac{1}{x}\partial_{y_{i}}$, $i\in\{1,...,n\}$, induce bounded maps from 
$Y_{\frac{1}{q},q}:=(Y_{1},Y_{0})_{\frac{1}{q},q}$ to $Y_{0}$.
\end{lemma}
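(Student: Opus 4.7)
The plan is to embed $Y_{1/q,q}$ into an explicit Mellin-Sobolev space (plus constants) by Lemma \ref{inter2}, then exploit the Fuchs form of $\partial_x$ and $\tfrac1x\partial_{y_i}$ as first-order cone differential operators to land in $Y_0$. Since constants are annihilated by both operators, only the Mellin-Sobolev summand is relevant. By Lemma \ref{inter2},
$$Y_{1/q,q}\ \hookrightarrow\ \mathcal{H}_{p}^{s+4-\frac{2}{q}-\varepsilon,\,\sigma-\varepsilon}(\mathbb{B})\oplus\mathbb{C},\qquad \sigma=\frac{n+1}{2}+\overline\varepsilon_n-\Big(\frac{n-3}{2}+\overline\varepsilon_n-\gamma\Big)\frac{1}{q}.$$

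Near the boundary, $\partial_x=x^{-1}(x\partial_x)$ and $\tfrac1x\partial_{y_i}$ are of the form \eqref{conediffop} with $\mu=1$, hence map $\mathcal{H}_{p}^{s_0+1,\gamma_0+1}(\mathbb{B})$ continuously into $\mathcal{H}_{p}^{s_0,\gamma_0}(\mathbb{B})$; in the interior they are standard bounded operators between local Sobolev spaces. Therefore they carry the Mellin-Sobolev summand into $\mathcal{H}_{p}^{s+3-\frac{2}{q}-\varepsilon,\,\sigma-1-\varepsilon}(\mathbb{B})$, and it remains to embed this into $\mathcal{H}_{p}^{s+2,\gamma+2}(\mathbb{B})\subset Y_0$.

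The Sobolev index is harmless: $s+3-\tfrac2q-\varepsilon\ge s+2$ is ensured by \eqref{pq1}, which gives $\tfrac2q<1$, and by choosing $\varepsilon$ small. The weight condition $\sigma-1-\varepsilon\ge\gamma+2$ is equivalent to $\sigma-\gamma-3\ge\varepsilon$. A direct computation shows
$$\sigma-\gamma-3=\Big(\frac{n-3}{2}+\overline\varepsilon_n-\gamma\Big)\frac{q-1}{q}-1,$$
so the required inequality $\sigma>\gamma+3$ reduces to $\gamma<\tfrac{n-3}{2}+\overline\varepsilon_n-1-\tfrac1{q-1}$, which is precisely the extra assumption in the statement (taking $\tfrac{n-3}{2}=-1$ in the case $n=1$). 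Choosing $\varepsilon$ sufficiently small then yields the desired embedding and hence the boundedness $Y_{1/q,q}\to Y_0$ of both operators.

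The only delicate point is the bookkeeping around $\sigma$: one must verify that the restrictions \eqref{choiceggrest1}/\eqref{choiceggrest} on $\gamma$, combined with the new upper bound $\gamma<\tfrac{n-3}{2}+\overline\varepsilon_n-1-\tfrac1{q-1}$, still leave a nonempty admissible range for $\gamma$ given the geometric hypothesis \eqref{gl1} (which forces $\overline\varepsilon_n>3/2$ if $n=1$ and $>1$ if $n\ge3$, so that $\overline\varepsilon_n-1-\tfrac1{q-1}>0$ for $q$ large). No genuinely hard analytical estimate is needed beyond the Mellin-Sobolev mapping properties already recorded in Section 3.
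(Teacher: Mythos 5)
Your proposal is correct and follows essentially the same route as the paper: embed $Y_{1/q,q}$ via Lemma \ref{inter2}, apply the first-order Fuchs-type operators to drop one order and one weight, and verify $s+3-\tfrac2q>s+2$ and $\sigma-1>\gamma+2$. Your explicit computation $\sigma-\gamma-3=\bigl(\tfrac{n-3}{2}+\overline\varepsilon_n-\gamma\bigr)\tfrac{q-1}{q}-1$ correctly fills in the ``short computation'' the paper leaves to the reader; the only trivial discrepancy is that the relevant source of $q>2$ here is \eqref{spq} rather than \eqref{pq1}.
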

\begin{proof}
According to Lemma \ref{inter2}, $\frac1x\partial_{y_j}$ maps $Y_{1/q,q}$ to 
$\mathcal{H}_{p}^{s+3-\frac{2}{q}-\varepsilon,\gs-1-\varepsilon}(\mathbb{B})$ 
continuously for every $\gve>0$, with $\gs$ defined there.
Our assumption \eqref{spq} implies that $q>2$ so that $s+3-2/q>s+2$. In order to establish the assertion it suffices to check that $\gs-1>\gg+2$. A short computation shows that our assumptions guarantee precisely this. 
The operator $\partial _{x}$ can be treated in the same way. 
\end{proof}

We are now ready to state the main result of this section. For better legibility we repeat the assumptions. 

\begin{theorem}
Let $\dim (\B)\not=3$ and assume that the first eigenvalue $\gl_1$ of the boundary Laplacian satisfies condition \eqref{gl1}. Let $s>-1$ and let the weight $\gg$ satisfy \eqref{choiceggrest1}/\eqref{choiceggrest}. Choose $1<p,q<\infty$ so large that 
\begin{eqnarray*}
-1+\frac{n+1}p+\frac2q<\min\{0,s\} \ \text{and} \ 
\gg<\begin{cases}\overline\gve_1-2-\frac{1}{q-1}; & n=1\\
\frac{n-3}2+\overline\gve_n -1-\frac1{q-1};& n\ge3.
\end{cases}
\end{eqnarray*}
Denote by $\underline{\Delta}: \cD(\underline\Delta)=Y_1\to Y_0$ the closed extension of the Laplacian in $Y_0$ associated with these data, defined in \eqref{ed2}.

For every strictly positive initial value $u_0\in Y_{1/q,q}$ 
we then find some $0<T\leq T_{0}$ such that the porous medium equation \eqref{e1}, \eqref{e2}, considered in the space $L^{q}(0,T;Y_0)$, has a unique solution 
$$
u\in L^{q}(0,T;Y_1)\cap W^{1,q}(0,T;Y_0)).
$$
\end{theorem}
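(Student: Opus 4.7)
The plan is to rewrite the porous medium equation in its quasilinear form \eqref{e3}--\eqref{e4} and apply the Cl\'ement--Li theorem (Theorem \ref{CL}) with the Banach couple $(X_1,X_0)=(Y_1,Y_0)$, where $Y_0$ and $Y_1$ are given by \eqref{X0} and \eqref{ed2}. Set $A(u)=-mu^{m-1}\underline\Delta$ with domain $Y_1$, and let the right-hand side be
\[
F(u,t)=f(u,t)-m(m-1)u^{m-2}\langle\nabla u,\nabla u\rangle_g.
\]
The maximal regularity of $A(u_0):Y_1\to Y_0$ is exactly the content of Proposition \ref{p2}: since $Y_{1/q,q}\hookrightarrow Y_0\hookrightarrow C(\B)$ via Lemma \ref{c0} and $u_0>0$ is continuous and strictly positive, Proposition \ref{p2} furnishes $R$-sectoriality of $c-u_0\underline\Delta$ with angle $>\pi/2$, hence maximal $L^q$-regularity by Weis' theorem.

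First I would fix a bounded open neighbourhood $U\subset Y_{1/q,q}$ of $u_0$ consisting of functions bounded below by $\alpha/2>0$, using the embedding into $C(\B)$. To verify (H1), I use that $Y_0$ is a Banach algebra and $Y_{1/q,q}\hookrightarrow Y_0$, so the powers $u^{m-1}$ are defined via the holomorphic functional calculus (valid by Lemma \ref{linv} applied in $Y_0$, whose conditions follow from $s+2>(n+1)/p$ and $\gamma+2>(n+1)/2$). For $u_1,u_2\in U$, the Dunford identity
\[
u_1^{m-1}-u_2^{m-1}=(u_2-u_1)\cdot\frac{1}{2\pi i}\int_\Gamma(-\lambda)^{m-1}(u_1+\lambda)^{-1}(u_2+\lambda)^{-1}\,d\lambda,
\]
together with Lemma \ref{boundinv} and the algebra structure of $Y_0$, gives
\[
\|mu_1^{m-1}\underline\Delta-mu_2^{m-1}\underline\Delta\|_{\cL(Y_1,Y_0)}\le\|mu_1^{m-1}-mu_2^{m-1}\|_{Y_0}\|\underline\Delta\|_{\cL(Y_1,Y_0)}\le C\|u_1-u_2\|_{Y_{1/q,q}}.
\]

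To verify (H2), I split $F(u_1,t_1)-F(u_2,t_2)$ into the $f$-contribution and the gradient contribution. The $f$-contribution is handled exactly as in Theorem \ref{pmt} via the holomorphic calculus representation \eqref{ee1}, \eqref{ee2} and the Lipschitz hypothesis on $f$, yielding a bound $c(\|u_1-u_2\|_{Y_{1/q,q}}+|t_1-t_2|)$. For the gradient part, the key new input is Lemma \ref{l1}: the operators $\partial_x$ and $\tfrac{1}{x}\partial_{y_j}$ map $Y_{1/q,q}$ boundedly into $Y_0$. Since $\langle\nabla u,\nabla v\rangle_g=(\partial_x u)(\partial_x v)+\sum h^{ij}(\tfrac{1}{x}\partial_{y_i}u)(\tfrac{1}{x}\partial_{y_j}v)$ and $Y_0$ is a Banach algebra containing the smooth coefficients $h^{ij}$, the bilinear form $\langle\nabla\cdot,\nabla\cdot\rangle_g\colon Y_{1/q,q}\times Y_{1/q,q}\to Y_0$ is continuous. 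The standard bilinear trick
\[
u_1^{m-2}\langle\nabla u_1,\nabla u_1\rangle_g-u_2^{m-2}\langle\nabla u_2,\nabla u_2\rangle_g=(u_1^{m-2}-u_2^{m-2})\langle\nabla u_1,\nabla u_1\rangle_g+u_2^{m-2}\bigl(\langle\nabla(u_1{-}u_2),\nabla u_1\rangle_g+\langle\nabla u_2,\nabla(u_1{-}u_2)\rangle_g\bigr),
\]
combined with the Dunford representation of $u^{m-2}$ and the $Y_0$-algebra property, yields the required Lipschitz bound.

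The main obstacle is the gradient term, which is why the restricted weight range \eqref{choiceggrest1}/\eqref{choiceggrest} and the spectral assumption \eqref{gl1} were imposed: they are precisely what makes Lemma \ref{l1} valid, so that $\nabla u\in Y_0$ with Lipschitz dependence on $u\in Y_{1/q,q}$. Once (H1)--(H3) are checked, Theorem \ref{CL} yields the existence of a unique solution $u\in L^q(0,T;Y_1)\cap W^{1,q}(0,T;Y_0)$ on some interval $(0,T)$ with $T\le T_0$, which is exactly the claimed conclusion.
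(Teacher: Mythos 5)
Your proposal is correct and follows essentially the same route as the paper: Cl\'ement--Li applied to the quasilinear form \eqref{e3}--\eqref{e4} with the couple $(Y_1,Y_0)$, maximal regularity of $mu_0^{m-1}\underline\Delta$ from Proposition \ref{p2} together with Lemma \ref{linv}, condition (H1) from the Banach-algebra/multiplier structure of $Y_0$ as in \eqref{h1}, and condition (H2) from Lemma \ref{l1} combined with the bilinear splitting and the Dunford representation of the powers. No gaps.
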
 
\begin{proof} 
We only have to check that Cl\'ement and Li's Theorem \ref{CL} can be applied to \eqref{e3}, \eqref{e4} with the Banach couple $Y_{0}$, $Y_{1}$.
Since $Y_{1/q,q}$ embeds to $\cH^{s+2,\gamma+2}_p(\B)\oplus\C$, Lemma \ref{linv} together with Proposition \ref{p2} shows that the operator $-m u_{0}^{m-1}\underline{\Delta}$ has maximal $L^q$-regularity 
for any strictly positive $u_{0}\in Y_{1/q,q}$. 

Similarly as in Equation \eqref{h1}, property (H1) follows from the fact that $Y_{1/q,q}$ embeds into a Banach algebra of multipliers on $Y_0$. 
Compared to Theorem \ref{pmt}, the proof of (H2) even simplifies. Since $Y_0$ is a 
Banach algebra, Lemma \ref{l1} implies the estimate
$$\|\langle \nabla v_1, \nabla v_2\rangle_g\|_{Y_0} 
\le C\|v_1\|_{Y_{1/q,q}}\|v_2\|_{Y_{1/q,q}}, \quad v_1,v_2\in Y_{1/q,q}.$$
Estimate \eqref{pmt.1} (with $X_0$ replaced by $Y_0$) can then be continued as follows 
\begin{eqnarray*}
&\le & C_1 \|u_1-u_2\|_{Y_0} + C_2 |t_1-t_2|+ C_3\|u_1-u_2\|_{Y_{1/q,q}}. 
\end{eqnarray*}
This completes the argument.
\end{proof}

\end{document}